\newcommand{\Rmnum}[1]{\expandafter\@slowromancap\romannumeral #1@}
\newtheorem{theorem}{Theorem}[section]
\newtheorem{lemma}{Lemma}[section]
\newtheorem{proposition}{Proposition}[section]
\newtheorem{remark}{Remark}[section]
			\newcommand{\N}{{\mathbb N}}
			\newcommand{\R}{{\mathbb R}}
			\newcommand{\bean}{\begin{eqnarray*}}
				\newcommand{\eean}{\end{eqnarray*}}
			\newcommand{\sbr}[1]{\left(#1\right)}
			\newcommand{\mbr}[1]{\left[#1\right]}
			\newcommand{\lbr}[1]{\left\{#1\right\}}
			\newcommand{\abs}[1]{\left\lvert#1\right\rvert}
			\newcommand{\Ni}[1]{\|#1\|_i^2}
\numberwithin{equation}{section}
\begin{document}
\theoremstyle{plain}

\title{\bf Least energy positive soultions for $d$-coupled  Schr\"{o}dinger systems with critical exponent in dimension three\thanks{Supported NSFC(No.12171265). E-mail addresses: liuth19@mails.tsinghua.edu.cn (T. H. Liu),    yous16@lzu.edu.cn (S.You), zou-wm@mail.tsinghua.edu.cn (W. M. Zou)} }
				
\date{}
\author{
{\bf Tianhao Liu, \; Song You, \; Wenming Zou}\\
\footnotesize \it  Department of Mathematical Sciences, Tsinghua University, Beijing 100084, China.\\
				}

\maketitle
				
\begin{center}
\begin{minipage}{120mm}
\begin{center}{\bf Abstract }\end{center}		
In the present paper, we consider the coupled Schr\"{o}dinger systems with critical exponent:
\begin{equation*}
\begin{cases}
-\Delta u_i+\lambda_{i}u_i=\sum\limits_{j=1}^{d} \beta_{ij}|u_j|^{3}|u_i|u_i  \quad ~\text{ in } \Omega,\\
u_i \in H_0^1(\Omega) ,\quad i= 1,2,...,d.
\end{cases}
\end{equation*}
Here, $\Omega\subset \mathbb{R}^{3}$ is a smooth bounded domain, $d \geq 2$, $\beta_{ii}>0$ for every $i$, and $\beta_{ij}=\beta_{ji}$ for $i \neq j$. We study a Br\'{e}zis-Nirenberg type problem: $-\lambda_{1}(\Omega)<\lambda_{1},\cdots,\lambda_{d}<-\lambda^*(\Omega)$, where $\lambda_{1}(\Omega)$ is the first eigenvalue of $-\Delta$ with Dirichlet boundary conditions and $\lambda^*(\Omega)\in (0, \lambda_1(\Omega))$. We acquire the existence of least energy positive solutions to this system for weakly cooperative case ($\beta_{ij}>0$ small) and for purely competitive case ($\beta_{ij}\leq 0$) by variational arguments. The proof is performed by mathematical induction on the number of equations, and requires more refined energy estimates for this system. Besides, we present a new nonexistence result, revealing some different phenomena comparing with the higher-dimensional case $N\geq 5$. It seems that this is the first paper to give a rather complete picture for the existence of least energy positive solutions to critical Schr\"{o}dinger system in dimension three.

	\vskip0.23in
	
	{\bf Key words:} Schr\"odinger system; Critical exponent; Dimension three; Least energy positive solutions; Variational arguments.
	
	\vskip0.1in
	{\bf 2010 Mathematics Subject Classification:}
	\vskip0.23in					
\end{minipage}
\end{center}

\vskip0.23in
\section{Introduction}
Consider the following elliptic system with $d \geq 2$ equations
\begin{equation} \label{mainequ1}
\begin{cases}
-\Delta u_i+\lambda_{i}u_i=\sum\limits_{j=1}^{d} \beta_{ij}|u_j|^{p}|u_i|^{p-2}u_i  \quad ~\text{ in } \Omega,\\
u_i \in H_0^1(\Omega) ,\quad i= 1,2,...,d,
\end{cases}
\end{equation}
where  $ N\geq 3$,  $2p \in (2,2^*] $, $2^*=\frac{2N}{N-2}$ is the Sobolev critical exponent, $\beta_{ii}>0$ for every $i$, $\beta_{ij}=\beta_{ji}$ when $i\neq j$.				
System \eqref{mainequ1} appears when looking for standing wave solutions $\Psi_i(x,t)=e^{\imath \lambda_{i}t}u_i(x)$ of time-dependent coupled nonlinear Schr\"odinger system
$$
\imath \partial_t \Psi_i +\Delta \Psi_i +  \sum_{j=1}^{d}\beta_{ij}\abs{\Psi_j}^{p}\abs{\Psi_i}^{p-2}\Psi_i=0, ~i=1,\ldots,d,
$$
where $\imath$ is the imaginary unit. This system originates from many physical models; for example, system \eqref{mainequ1} can be used to explain Bose-Einstein condensation (see \cite{Timmermans 1998}). In quantum mechanics, the solutions $\Psi_i(i=1,...d)$ are the corresponding condensate amplitudes, $\beta_{ii}$ represent self-interactions within the same component, while $\beta_{ij}$ $(i\neq j)$ describe the strength and  type of interactions between different components $u_i$ and $u_j$. Futhermore, $\beta_{ij}>0$ means the interaction is cooperative, while $\beta_{ij}<0$ represents the interaction is competitive.

\vskip0.3in

Set $\mathbb{H}_d:=\sbr{H^1_0(\Omega)}^d$. Note that $\beta_{ij}=\beta_{ji}$, then
solutions of \eqref{mainequ1} correspond to the critical points of the  $C^2$- energy functional $J:\mathbb{H}_d\to \R$ defined by				
\begin{align*}
J(\mathbf{u} )=\frac{1}{2}\sum_{i=1}^{d} \left\|u_i \right\|_i^2-\frac{1}{6}\sum_{i,j=1}^{d} \int_{\Omega}\beta_{ij} \abs{u_i}^p\abs{u_j}^p,
\end{align*}
where $\mathbf{u}=\sbr{u_1,\cdots,u_d}$ and $\|u_i\|_i^2:=\int_{\Omega}(|\nabla u_{i}|^{2}+\lambda_{i}u_{i}^{2})$.

\vskip0.23in
We say a solution  is  {\it trivial } if  all its components  are vanishing. We say a solution is {\it semi-trivial } if there exist at least one (but not all) vanishing component. We say a solution  is  {\it nontrivial} if all its components are nontrivial. However, we are interested in the existence of {\it positive solutions},				i.e., $\mathbf{u}$ solving \eqref{mainequ1} such that $u_i>0$ for every $i$. In particular, we mainly focus on the existence of   {\it  positive least energy  solutions  (or positive ground state), } which attain
\begin{equation}\label{leastenergylevel}
\mathcal{C}_{LES}:=\inf\left\lbrace J(\mathbf{u}): \mathbf{u} \text{ is a solution of } \eqref{mainequ1} \text{ such that }  u_i >0 \text{ for all } i=1,2,...,d \right\rbrace.
\end{equation}
Since the system may admit many semi-trivial solutions,  we will also consider
\begin{equation}\label{eqgroundstatelevel}
\inf\{J(\mathbf{u}):\  J'(\mathbf{u})= 0,\ \mathbf{u}\in H^1_0(\Omega;\R^d), \mathbf{u}\neq \mathbf{0} \}.
\end{equation}
We call a solution $\mathbf{u}\neq \mathbf{0}$ is a {\it generalized ground state solution} if it achieves \eqref{eqgroundstatelevel}.

\medbreak

\vskip0.23in

In the last twenty years, for the subcritical case $2<2p<2^*$, the existence of solutions to \eqref{mainequ1} has been investigated extensively. For the two equations case $d=2$, where there is only one interaction constant $\beta=\beta_{12}=\beta_{21}$, see \cite{Ambrosetti 2007,Bartsch-Dancer-Wang 2010,Chen-Zou2013,Mandel=NoDEA=2015,Sirakov 2007,WT 2008} and reference therein. For an arbitrary number of equations $d\geq 3$, starting from Lin and Wei \cite{Lin-Wei=CMP=2005}, where the authors presented the nonexistence of least energy positive solutions for the purely competitive case and the existence of least energy positive solutions for the purely cooperative case with some additional conditions. In \cite{Correia/Oliveria/Tavares=JFA=2016} the authors studied the existence and nonexistence of positive ground state solutions to system \eqref{mainequ1} for the purely cooperative case. For the mixed case, that is, the existence of at least two pairs, $(i_1,j_1)$ and $(i_2,j_2)$, such that $i_1\neq j_1, i_2\neq j_2$, $\beta_{i_1 j_1}>0$ and $\beta_{i_2 j_2}<0$, the existence of solutions has attracted great interest, see \cite{BSWang 2016,BLWang 2019,Correia/Oliveria/Tavares=JFA=2016,Sato-Wang 2015,soave-hugo=JDE=2016,Soave 2015}.

\vskip0.3in

Different from the subcritical equation, we are more concerned about the critical equation in this article, i.e., $2p=2^*$. For the single equation case $d=1$, the system \eqref{mainequ1} turns into the classical Br\'ezis-Nirenberg problem \cite{Brezis-Nirenberg1983}, where the existence of a positive ground state solution is shown for $-\lambda_1(\Omega)<\lambda_i<0$ when $N\geq 4$. However, in sharp contrast to the high-dimensional situation, from the pioneering paper \cite{Brezis-Nirenberg1983} we learn that there exist  essential differences and difficulties in the three-dimensional case ($N=3$).

 \vskip0.1in

 When $N=3$, system \eqref{mainequ1} reduces to the following problem
\begin{equation}\label{B-N}
	-\Delta u+\lambda_i u= \beta_{ii} |u|^4u, ~u\in H_0^1(\Omega).
\end{equation}
In \cite{Brezis-Nirenberg1983}, the authors proved that \eqref{B-N} has a least energy positive solution $\omega_i\in C^2(\Omega)\cap C^1(\overline{\Omega})$ if $\lambda_i\in (-\lambda_1(\Omega), -\lambda^*(\Omega))$, where $\lambda^*(\Omega)=\frac{\pi^2}{4R_0^2}$ with $R_0=\sup\left\{R| x\in \Omega, B_R(x)\subset \Omega\right\}$, and moreover
\begin{equation}\label{Energy-BN}
	m_i:=\frac{1}{2}\int_\Omega(|\nabla \omega_i|^2+\lambda_i \omega_i^2)-\frac{1}{6}\int_\Omega \mu_i |\omega_i|^6=\frac{1}{3}\|\omega_i\|_i^2 <
	\frac{1}{3} \beta_{ii}^{-\frac{1}{2}}\widetilde{S}^{\frac{3}{2}},
\end{equation}
where $\widetilde{S}$ is the Sobolev best constant of $\mathcal{D}^{1,2}(\R^3)\hookrightarrow L^6(\R^3)$.
 In particular, when $\Omega$ is a ball $B$ in $\R^3$, the authors \cite{Brezis-Nirenberg1983} presented that \eqref{B-N} admits a least energy positive solution if and only if $\lambda_i\in (-\lambda_1(B), -\frac{\lambda_1(B)}{4})$. In \cite{Chen-Zou2012}, the authors proved that \eqref{B-N} has a ground state solution with
 $-\lambda_{n+1}(\Omega)< \lambda_i<-\lambda_{n+1}(\Omega)+\widetilde{S}|\Omega|^{-\frac{2}{N}}$, where $n\geq1$ and $\lambda_{n+1}(\Omega)$ is the $n+1$-th Dirichlet eigenvalue of $(-\Delta, \Omega)$ with multiplicity. For more results related to the Br\'ezis-Nirenberg problem, see \cite{Atkinson-Brezis-Peletier,Cerami-Solimini-Struwe 1986,Clapp-Weth 2005,Roselli-Willem 2009,Schechter-Zou}.

\vskip0.3in

When $N=4 $ and  the system \eqref{mainequ1} consists of two equations (that is, $d=2$),   Chen and Zou \cite{Zou 2012} considered
the problem under the assumption that there is only one interaction constant $\beta=\beta_{12}=\beta_{21}$. Then they proved  that there exist $0<\beta_{1}<\beta_{2}$ such that system \eqref{mainequ1} has a least energy positive solution if $\beta\in (-\infty, \beta_{1}) \cup (\beta_{2}, +\infty)$ when $N=4$. Subsequently, Chen and Zou \cite{Zou 2015} showed that system \eqref{mainequ1} has a least energy positive solution for any $\beta \neq 0$ when $N\geq 5$.

\vskip0.12in

When the number of the system \eqref{mainequ1} is  $d\geq 3$ and the dimension $N\leq 4$,   Guo, Luo and Zou \cite{GuoZou2018}   studied  the pure cooperative system defined   on a bounded smooth domain of $\R^N$, they obtained the existence and classification of  the least energy positive solutions to \eqref{mainequ1} under  the hypotheses $-\lambda_1(\Omega)<\lambda_{1}=\cdots=\lambda_{d}<0$   and  some additional technical conditions on the coupling coefficients. We remark that when $N=2,3$ the system considered in \cite{GuoZou2018}  is subcritical.

\vskip0.12in
When $N\geq 4$ and $2p=2^*$, in \cite{ClappSzulkin,Wu Y-Z 2017} the authors obtained  that existence of least energy positive solutions in a bounded smooth domain of $\R^N$ for the purely competitive cases. While for the purely cooperative case and $N\geq 5$,  Yin and Zou \cite{Yin-Zou} obtained the existence of positive ground state solutions to \eqref{mainequ1}. Recently, Tavares and You \cite{TavaresYou2020} dealt with the existence of least energy positive solutions for the mixed case in a bounded smooth domain of $\R^4$. Afterwards, Tavares, You and Zou \cite{Hugo-You-Zou=Arxiv} established the existence of least energy positive solutions for the mixed case with $N\geq 5$. For the other topics regarding critical system, see \cite{Clapp-Pistoia2018,HeYang2018,Pistoia 2018-1,Tavares 2019}.

\vskip0.12in

All the papers that deal with system \eqref{mainequ1} with $d\geq 2$ in the critical case ($ 2p=2^* $) mainly focus on the higher dimensional case $(N\geq 4)$.
To the best of our knowledge, there are only three papers \cite{Kim2013,YePeng2014,You-Zou2022} studying system \eqref{mainequ1} for the {\bf critical case} on a smooth bounded domain with $N=3$ and $d=2$ in the literature. In \cite{Kim2013,YePeng2014} the authors proved that there exists $\overline{\beta}>0$ such that system \eqref{mainequ1} has a
least energy positive solution if $\beta_{12}>\overline{\beta}$. Recently, You and Zou \cite{You-Zou2022}
proved that system \eqref{mainequ1} has a least energy positive solution for $\beta_{12}>0$ small. Those papers mentioned above only deal with the purely cooperative case ($\beta_{12}>0$).

 \newpage

 As far as we know, there is no paper  considering the existence of  least energy positive solutions of \eqref{mainequ1} with $N=3$ and $2p=2^*$ for the purely competitive cases ($\beta_{12}<0$) or multi equation coupling  case ($d\geq 3$). The present paper makes a first contribution in this directions. We consider the following critical system
\begin{equation} \label{mainequ}
\begin{cases}
-\Delta u_i+\lambda_{i}u_i=\sum\limits_{j=1}^{d} \beta_{ij}|u_j|^{3}|u_i|u_i  \quad ~\text{ in } \Omega \subset \R^3,\\
u_i \in H_0^1(\Omega) ,\quad i= 1,2,...,d.
\end{cases}
\end{equation}				
Throughout this text we always work under the following assumptions				
\begin{equation}\label{assumption-1}
-\lambda_{1}(\Omega)< \lambda_{1},...,\lambda_d <-\lambda^*(\Omega),  \quad \Omega \text{ is a bounded smooth domain of  } \R^3,
\end{equation}
and
\begin{equation}\label{assumption -2}
\beta_{ii} > 0 \quad \forall i=1,2,...,d, \quad \beta_{ij}=\beta_{ji} \quad \forall i,j = 1,2,...,d, i \neq j,
\end{equation}
where  $\lambda_{1}(\Omega)$ denotes the first eigenvalue of $-\Delta$ with Dirichlet boundary conditions. We note  $\lambda^*(\Omega)\in (0, \lambda_1(\Omega))$.

				
\subsection{Main results}

Consider the Nehari type set
\begin{equation} \label{nehari}
\mathcal{N}=\lbr{\mathbf{u}\in \mathbb{H}_d:u_i \not\equiv 0,\left\|u_i \right\|_i^2=\sum_{j=1}^{d} \int_{\Omega}\beta_{ij} \abs{u_i}^3\abs{u_j}^3  \quad \text{for every } i=1,2,..,d},
\end{equation}
and the infimum of $J$ on the set $\mathcal{N}$
\begin{equation} \label{defi of C}
	\mathcal{C}:= \inf_{\mathbf{u}\in \mathcal{N}}J(\mathbf{u})=\inf_{\mathbf{u}\in \mathcal{N}}\frac{1}{3}\sum_{i=1}^d \left\|u_i \right\|_i^2.
\end{equation}
It is easy to see that $\mathcal{C}_{LES}=\mathcal{C}$ if $\mathcal{C}$ is attained on $\mathcal{N}$, where $\mathcal{C}_{LES}$ is defined in \eqref{leastenergylevel}.
\medbreak

Our first result of this paper is the following
\begin{theorem}\label{existence-1}
Assume that \eqref{assumption-1} and  \eqref{assumption -2} hold. There exists a constant $K=K\big(\Omega, \{\lambda_{i}\}_{i=1}^d, \{\beta_{ii}\}_{i=1}^{d}\big) > 0$ such that if
$$
 0<\beta_{ij} < K \quad  \forall i,j = 1,2,...,d, i \neq j,
$$
then $\mathcal{C}$ is achieved by a positive $\mathbf{u}\in \mathcal{N}$, and the system \eqref{mainequ} has a  positive least energy  solution.
\end{theorem}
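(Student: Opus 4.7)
The plan is to show that $\mathcal{C}$ is attained on the Nehari set $\mathcal{N}$ by a nonnegative minimizer via direct minimization, then to upgrade to strict positivity with the strong maximum principle. Two obstructions must be excluded on a minimizing sequence: loss of compactness through a Sobolev bubble, and escape to a semi-trivial configuration in which some component vanishes in the limit. Both are ruled out by strict upper bounds on $\mathcal{C}$, and the weakly-cooperative smallness of the $\beta_{ij}$ is precisely what is needed to produce those bounds. Following the hint in the abstract, I would argue by induction on the number of equations $d$, the base case $d=1$ being the Br\'ezis--Nirenberg result recalled at \eqref{Energy-BN}.

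\textbf{Setting up the Nehari minimization.} Using $\beta_{ii}>0$ and $\beta_{ij}$ small, show that for each $\mathbf{u}\in\mathbb{H}_d$ with every $u_i\not\equiv 0$ there is a unique $(t_1,\dots,t_d)\in(0,\infty)^d$ placing $(t_1u_1,\dots,t_du_d)$ on $\mathcal{N}$; the corresponding fixed-point map is contractive precisely because the off-diagonal interactions are a small perturbation of the diagonal ones. This makes $\mathcal{N}$ a $C^1$ submanifold of codimension $d$ on which constrained critical points of $J$ are free critical points. A Sobolev argument gives $\mathcal{C}>0$, while the identity $J|_\mathcal{N}=\tfrac{1}{3}\sum\|u_i\|_i^2$ shows any minimizing sequence $\{\mathbf{u}_n\}\subset\mathcal{N}$ is bounded in $\mathbb{H}_d$; replacing each component by its modulus and re-projecting onto $\mathcal{N}$ (which does not raise $J$), we may assume $\mathbf{u}_n\geq 0$.

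\textbf{The strict energy estimate (the key step).} The heart of the proof is the two strict inequalities
\begin{equation}
\mathcal{C}<\tfrac{1}{3}\min_{i}\beta_{ii}^{-1/2}\widetilde{S}^{3/2},\qquad \mathcal{C}<\mathcal{C}_{I}+\sum_{i\notin I}m_i\ \ \text{for every }I\subsetneq\{1,\dots,d\},
\end{equation}
where $\mathcal{C}_I$ is the analogue of $\mathcal{C}$ for the subsystem indexed by $I$ (with $\mathcal{C}_\emptyset:=0$). For each $I$ the inductive hypothesis furnishes a positive minimizer $\mathbf{v}$ of the $|I|$-equation subsystem at level $\mathcal{C}_I$; I would test $\mathcal{C}$ along the Nehari re-projection of $\mathbf{v}$ extended by copies of the Br\'ezis--Nirenberg ground states $\omega_i$ for $i\notin I$. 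Since $\beta_{ij}>0$ and $\omega_i,v_j>0$ on $\Omega$, each new coupling integral $\int_\Omega|\omega_i|^3|v_j|^3$ is strictly positive, and a Taylor expansion of the scaling factors produces an energy decrease of order $O(\beta_{ij})$ from the uncoupled level $\mathcal{C}_I+\sum_{i\notin I}m_i$; the constant $K$ in the statement is chosen small enough that this first-order gain dominates all higher-order interaction terms (the latter controlled by the \emph{a priori} bound \eqref{Energy-BN}). The first inequality is then the specialization of the second to the single-component subsystem.

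\textbf{Compactness and conclusion.} With these strict bounds, let $\mathbf{u}_n\rightharpoonup\mathbf{u}$ weakly in $\mathbb{H}_d$. A Brezis--Lieb decomposition of $J(\mathbf{u}_n)$ together with the Nehari identity splits the mass between $\mathbf{u}$ and a concentration profile; the bound below $\tfrac{1}{3}\min_i\beta_{ii}^{-1/2}\widetilde{S}^{3/2}$ prevents any positive Sobolev bubble from carrying off mass, which upgrades the convergence to strong $L^6$ and hence to strong $\mathbb{H}_d$ convergence, giving $J(\mathbf{u})\leq\mathcal{C}$. The second inductive estimate rules out any component vanishing in the limit: if $u_i\equiv 0$ for $i$ in some $I^c$, the remaining tuple would sit in $\mathcal{N}_I$ and push $J(\mathbf{u})$ above $\mathcal{C}_I+\sum_{i\notin I}m_i>\mathcal{C}$, a contradiction. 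Hence $\mathbf{u}\in\mathcal{N}$, $J(\mathbf{u})=\mathcal{C}$, and $\mathbf{u}\geq 0$. The strong maximum principle for $-\Delta+\lambda_i$ (applicable since $\lambda_i>-\lambda_1(\Omega)$) upgrades each $u_i$ to be strictly positive, and the Lagrange multiplier argument on $\mathcal{N}$ gives that $\mathbf{u}$ solves \eqref{mainequ}. The main obstacle throughout is the strict energy estimate of the third step: in dimension three one cannot rely on the explicit Aubin--Talenti profile as one does for $N\geq 4$, so the perturbative gain from the coupling must be measured against the actual Br\'ezis--Nirenberg ground state $\omega_i$ using only the information encoded in \eqref{Energy-BN}.
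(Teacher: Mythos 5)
There is a genuine gap. Your first claimed strict estimate,
\begin{equation}
\mathcal{C}<\tfrac{1}{3}\min_{i}\beta_{ii}^{-1/2}\widetilde{S}^{3/2},
\end{equation}
is false for $d\geq 2$. Since the off--diagonal couplings are small, the system nearly decouples and $\mathcal{C}$ is comparable to $\sum_i m_i$; each $m_i$ can be arbitrarily close to $\tfrac{1}{3}\beta_{ii}^{-1/2}\widetilde{S}^{3/2}$, so already for $d=2$ the level $\mathcal{C}$ exceeds $\tfrac{1}{3}\min_i\beta_{ii}^{-1/2}\widetilde{S}^{3/2}$. Because your compactness step hinges entirely on this bound to "prevent any positive Sobolev bubble from carrying off mass" and deduce strong $\mathbb{H}_d$ convergence, the argument collapses at that point. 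Moreover, even if some correct threshold were available, ruling out \emph{all} loss of compactness at once is too crude here: one has to allow some components to bubble while others converge, and control each case separately.

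What is missing is a quantitative \emph{lower} bound on the Dirichlet energy carried away by any vanishing component. In the paper's argument, if $u_{i,n}\rightharpoonup 0$ and $k_i:=\lim\int_\Omega|\nabla(u_{i,n}-u_i)|^2$, the Nehari identity on the remainders together with Sobolev gives
\begin{equation}
k_i\leq \beta_{ii}\widetilde{S}^{-3}k_i^{3}+K\,S^{-3}(6\overline{C})^{3/2}k_i^{3/2},
\end{equation}
and the constant $K_4$ (and the fixed gap $\delta$ coming from the strict Br\'ezis--Nirenberg inequality $3m_i<\beta_{ii}^{-1/2}\widetilde{S}^{3/2}$) is chosen precisely so that this forces $k_i>3m_i$. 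Combined with the \emph{non-strict} estimates $\mathcal{C}_I\leq\sum_{i\in I}m_i$ and $\mathcal{C}_I\leq\mathcal{C}_Q+\sum_{i\in I\setminus Q}m_i$ (Propositions \ref{Energy-comparing 1} and \ref{energy comparing c_q}), this gives a strict contradiction when any component vanishes. Your proposal pursues strict \emph{upper} bounds instead, which is plausible (indeed the cross terms dropped in the paper's Prop.~\ref{energy comparing c_q} are strictly positive), but without a companion lower bound on the lost mass you cannot close the contradiction; you never explain why a vanishing component must carry at least $m_i$ worth of energy. So the proposal needs, at minimum, to replace the false first estimate with the $k_i>3m_i$ mechanism (or an equivalent quantitative concentration analysis), which is exactly where the delicate choice of $K$ in the theorem enters.

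Aside from this gap, your other ingredients (unique Nehari projection via diagonal dominance, the induction on $d$, testing the level with a subsystem minimizer extended by $\omega_i$, modulus plus strong maximum principle) match the paper's approach in spirit, so I would focus entirely on repairing the concentration-compactness step.
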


\begin{remark}
{\rm  Theorem \ref{existence-1} shows that the system \eqref{mainequ} has a least energy positive solution when the interactions between different components are weakly cooperative. We mention that $K$ is only dependent on $\Omega,\lambda_{i},\beta_{ii}, i=1,\ldots, d$. In particular, we will see  \eqref{defi of K}  ahead  for the explicit expression of $K$.}
\end{remark}

\begin{remark}
{\rm When $N=4$ and $p=2$, in \cite{TavaresYou2020} the authors have proved that the system \eqref{mainequ1} has a least energy positive solution for the weakly cooperative case, see \cite[Corollary 1.6]{TavaresYou2020}.}

\end{remark}

In \cite{TavaresYou2020}, because of lack of compactness, the authors established some precise energy estimates and compared the least energy level to \eqref{mainequ1} with that of some kinds of limit system ($\Omega=\mathbb{R}^N$ and $\lambda_i=0$) and appropriate subsystem. In order to obtain the corresponding energy estimate, the authors in \cite{TavaresYou2020} took a cutoff function $\xi$ such that $\xi U \in H_0^1(\Omega)$, where $U$ is the Aubin-Talenti bubble (see \eqref{defi of U{varepsilon}}). Unlike the higher dimensional case $N\geq 4$, the Aubin-Talenti bubble $U$ decays slowly in dimension three. Therefore, $\xi$ can only be chosen as some particular functions for $N=3$ (this fact has been implicitly pointed out by \cite{Brezis-Nirenberg1983}). So it is difficult to acquire the corresponding energy estimates for the system \eqref{mainequ} with $d\geq3$ by using the method in \cite{TavaresYou2020}. Thus we need to introduce new ideas to deal with this problem. In this paper, we compare the least energy level to \eqref{mainequ} with that of single equation ($d=1$) and appropriate subsystem, and establish new energy estimates (see Proposition \ref{Energy-comparing 1} and Proposition \ref{energy comparing c_q}).

\medbreak

To study the existence of ground state solutions of \eqref{mainequ}, we consider the following  Nehari manifold
\begin{equation}
	\mathcal{M}:=\lbr{\mathbf{u}\in \mathbb{H}_d:\  \mathbf{u}\neq \mathbf{0},\  \sum_{i=1}^d\Ni{u_i}=\sum_{i,j=1}^d\int_{\Omega}\beta_{ij}|u_i|^3|u_j|^3 },
\end{equation}
and the level
\begin{equation}
	\mathcal{A}:=\inf\lbr{J(\mathbf{u}): \mathbf{u} \in \mathcal{M}}.
\end{equation}	
Observe that a solution $\mathbf{u}\neq \mathbf{0}$ is a {\it generalized ground state solution} if it achieves $\mathcal{A}$. It follows from  $\mathcal{N}\subset \mathcal{M}$ that  $\mathcal{A} \leq \mathcal{C}$.

\vskip0.12in
Theorem \ref{existence-1} shows that the system \eqref{mainequ} has a positive least energy  solution when the interactions between different components are weakly cooperative. While the next theorem shows that the system \eqref{mainequ} does not have any nontrivial generalized  ground state solution  for the weakly cooperative case.		
\begin{theorem}\label{existence-4}
Assume that \eqref{assumption-1}, \eqref{assumption -2} hold, and $\beta_{ij}\geq 0$ for any $i\neq j$. Then $\mathcal{A}$ is attained and system \eqref{mainequ} has a generalized ground state solution. However,
if $\beta_{ij}\equiv b$, for any $i\neq j$, and
\begin{equation*}
0<b < 2^{\frac{3-d}{2}} \sqrt{\max_{1\leq i \leq d}\lbr{\beta_{ii}}\min_{1\leq i \leq d}\lbr{\beta_{ii}}},
\end{equation*}
then system $(\ref{mainequ})$ has no nontrivial generalized ground state solutions, i.e., strictly we have ${\mathcal{A}}<{\mathcal{C}}$ and
$\mathcal{A}$ is attained only by a semi-trivial element.

\end{theorem}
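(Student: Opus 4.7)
The plan splits into two stages: first the attainment of $\mathcal{A}$ under the sign condition $\beta_{ij}\ge 0$, then the strict inequality $\mathcal{A}<\mathcal{C}$ under the coupling bound.

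\textbf{Stage 1 (attainment of $\mathcal{A}$).} The cooperative sign immediately supplies built-in competitors: for each $i$ the semi-trivial $\mathbf{W}_i:=(0,\ldots,\omega_i,\ldots,0)$, with $\omega_i$ the Br\'ezis--Nirenberg least energy positive solution of $-\Delta u+\lambda_i u=\beta_{ii}u^5$, lies in $\mathcal{M}$, is a critical point of $J$ on $\mathbb{H}_d$, and has $J(\mathbf{W}_i)=m_i$. Hence
\begin{equation*}
\mathcal{A}\le \min_{1\le i\le d}m_i<\tfrac{1}{3}\min_{1\le i\le d}\beta_{ii}^{-1/2}\widetilde{S}^{3/2}
\end{equation*}
by \eqref{Energy-BN}, placing $\mathcal{A}$ strictly below the single-bubble threshold. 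Taking any minimizing sequence $\{\mathbf{u}_n\}\subset\mathcal{M}$, the identity $J|_{\mathcal{M}}=\tfrac{1}{3}\sum\|u_i\|_i^2$ makes it bounded, so along a subsequence $\mathbf{u}_n\rightharpoonup\mathbf{u}$. I would apply Brezis--Lieb both to the quadratic norms and to each product $\int u_{i,n}^3u_{j,n}^3$. Because all $\beta_{ij}\ge 0$ and one may assume $u_{i,n}\ge 0$, the resulting energy splitting has a non-negative residual coupling, and the sub-threshold position of $\mathcal{A}$ forbids a nontrivial concentrating bubble. Therefore either $\mathbf{u}\in\mathcal{M}$ realizes $\mathcal{A}$, or the minimum is attained by some $\mathbf{W}_{i_0}$. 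In both cases a standard Lagrange multiplier argument on $\mathcal{M}$ shows the minimizer is a critical point of $J$, hence a generalized ground state.

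\textbf{Stage 2 (strict inequality $\mathcal{A}<\mathcal{C}$).} Since $\mathcal{A}\le\min_i m_i$, it suffices to show every fully positive $\mathbf{u}\in\mathcal{N}$ satisfies $J(\mathbf{u})>\min_i m_i$ when $0<b<2^{(3-d)/2}\sqrt{\beta_{\max}\beta_{\min}}$, with $\beta_{\max}=\max_i\beta_{ii}$ and $\beta_{\min}=\min_i\beta_{ii}$. Setting $x_i:=\|u_i\|_6^3$ and applying Cauchy--Schwarz $\int u_i^3u_j^3\le x_ix_j$ to the componentwise Nehari identity yields
\begin{equation*}
\|u_i\|_i^2\le \beta_{ii}x_i^2+b\,x_i\sum_{j\neq i}x_j.
\end{equation*}
Coupling this with the Br\'ezis--Nirenberg Sobolev lower bound $\|u_i\|_i^2\ge S_{\lambda_i}\|u_i\|_6^2$ and the identity $m_i=\tfrac{1}{3}\beta_{ii}^{-1/2}S_{\lambda_i}^{3/2}$ produces a closed polynomial system in $(x_1,\ldots,x_d)$. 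Following the induction-on-$d$ strategy announced in the abstract, I would first treat $d=2$ directly, using the two Nehari identities together with Cauchy--Schwarz and the two Sobolev lower bounds to extract the base-case threshold $b<\sqrt{2}\sqrt{\beta_{11}\beta_{22}}$. For $d\ge 3$, assuming the $(d-1)$-case, one chooses an optimally placed index $i_\ast$ and drops it; the remaining $d-1$ components form a near-competitor for the $(d-1)$-Nehari set, and the residual cross-terms of order $b$ can be reabsorbed by one more Cauchy--Schwarz at the cost of a factor $1/\sqrt{2}$. Invoking the inductive hypothesis then gives $J(\mathbf{u})>\min_i m_i$, and the accumulated loss of $1/\sqrt{2}$ at each stage assembles into the stated threshold $2^{(3-d)/2}\sqrt{\beta_{\max}\beta_{\min}}$. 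Once $\mathcal{C}>\min_i m_i\ge \mathcal{A}$, no fully positive minimizer of $\mathcal{A}$ exists, so the generalized ground state must be semi-trivial.

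\textbf{Main obstacle.} The technical heart is the inductive step of Stage 2: dropping the $i_\ast$-th component leaves the other functions satisfying the $(d-1)$-system's Nehari identity only up to a cross-term proportional to $b$, and that error has to be reabsorbed via Cauchy--Schwarz with exactly one factor $1/\sqrt{2}$ while retaining the dependence on both $\beta_{\max}$ and $\beta_{\min}$. Ruling out degenerate configurations where several of the $x_i$ are simultaneously small, and tracking the precise dependence of the constants through the induction, is where the bulk of the work concentrates; it is this bookkeeping that pins the threshold to the specific form $2^{(3-d)/2}\sqrt{\beta_{\max}\beta_{\min}}$ rather than any simpler expression.
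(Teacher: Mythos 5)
Your plan identifies the right objects (the Nehari manifold $\mathcal{M}$, the semi-trivial competitors, a polynomial inequality) but both stages diverge from what actually works, and Stage 1 contains a genuine gap.

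\textbf{Stage 1.} You place $\mathcal{A}$ strictly below $\tfrac{1}{3}\min_i\beta_{ii}^{-1/2}\widetilde{S}^{3/2}$ via the semi-trivial competitors and then assert this forbids a concentrating bubble. That is the wrong threshold. If a minimizing sequence $\mathbf{u}_n\rightharpoonup 0$, the residuals $(\sigma_{1,n},\ldots,\sigma_{d,n})$ renormalize into the Nehari set $\mathcal{M}'$ of the \emph{limit system} on $\R^3$, which forces $\mathcal{A}\ge\mathcal{B}$ where $\mathcal{B}=\tfrac{1}{3}P_{\max}^{-1/2}\widetilde{S}^{3/2}$ and $P_{\max}=\max_{|\mathbf{X}|=1}\sum_{i,j}\beta_{ij}|x_i|^3|x_j|^3$. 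Because all $\beta_{ij}\ge 0$, one always has $P_{\max}\ge\max_i\beta_{ii}$, and for moderately large positive couplings the maximizer is an interior point with $P_{\max}$ strictly larger; then $\mathcal{B}<\tfrac{1}{3}\max_i\beta_{ii}^{-1/2}\widetilde{S}^{3/2}=\tfrac{1}{3}\min_i\beta_{ii}^{-1/2}\widetilde{S}^{3/2}$, and your bound on $\mathcal{A}$ does not rule out a concentrating bubble. (Take $d=2$, $\beta_{11}=\beta_{22}=1$, $\beta_{12}=b$ large: $P_{\max}\sim b/2\to\infty$, so $\mathcal{B}\to 0$ while $\min_i m_i$ stays bounded away from $0$.) The paper closes this gap in Lemma \ref{GSS energy estimate 1}: it tests $J$ against the \emph{vector} test function $\bigl(\tau_1 P_{\max}^{-1/4}w_\varepsilon,\ldots,\tau_d P_{\max}^{-1/4}w_\varepsilon\bigr)$, where $\boldsymbol{\tau}$ is the optimal direction for $P$ and $w_\varepsilon$ the truncated Aubin--Talenti bubble, and uses the strict inequality $\lambda_i<-\lambda^*(\Omega)$ in the $O(\varepsilon)$ term to get $\mathcal{A}<\mathcal{B}$. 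Your single-component competitor cannot achieve this.

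\textbf{Stage 2.} Your logic that ``every fully positive $\mathbf{u}\in\mathcal{N}$ has $J(\mathbf{u})>\min_i m_i$'' would indeed suffice, but the method you propose---induction on $d$, dropping one index, reabsorbing the cross-term with a $1/\sqrt{2}$ loss at each step---is not what the paper does, and you yourself flag the inductive step as unresolved. In fact no induction is needed. The paper argues directly with a \emph{single} semi-trivial competitor: assuming $\mathbf{u}$ is a nontrivial ground state and comparing $J(\mathbf{u})\le J(tu_1,0,\ldots,0)$ on $\mathcal{M}$ gives the scalar inequality $A(A+B+2bD+bE)^2\le(A+bD)^3$ with $A=\beta_{11}|u_1|_6^6$, $B=\sum_{i\ge 2}\beta_{ii}|u_i|_6^6$, $D=\sum_{j\ge 2}|u_1u_j|_3^3$, $E=\sum_{i\neq j\ge 2}|u_iu_j|_3^3$. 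Expanding and discarding nonnegative terms yields $2A^2B+AB^2\le b^3D^3$, and a single Cauchy--Schwarz in the form $D^2\le 2^{d-2}\beta_{11}^{-1}\max_{j\ge 2}\beta_{jj}^{-1}\,AB$ pins $b\ge 2^{(3-d)/2}\beta_{11}^{1/2}\min_{j\ge 2}\beta_{jj}^{1/2}$. Running through the index $1$ replaced by each $j$ and choosing the best one produces the stated threshold. The factor $2^{(3-d)/2}$ comes entirely from the $\ell^1$--$\ell^2$ step $\bigl(\sum_{j=2}^{d}|u_j|_6^3\bigr)^2\le 2^{d-2}\sum_{j=2}^{d}|u_j|_6^6$ inside a single Cauchy--Schwarz, not from an accumulated $1/\sqrt{2}$ per induction step. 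You should abandon the induction and follow this direct comparison.

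Minor remark: once $\mathcal{A}<\mathcal{C}$ is known, a minimizer of $\mathcal{A}$ (which exists by Stage 1) cannot be fully nontrivial, since otherwise it would lie in $\mathcal{N}$ and give $\mathcal{C}\le\mathcal{A}$, so it is automatically semi-trivial; the modulus-and-maximum-principle reduction you invoke handles sign changes correctly.
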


\begin{remark}
	{\rm In \cite{Hugo-You-Zou=Arxiv,Yin-Zou} the authors proved that system \eqref{mainequ1} with $2p=2^*$ and $N\geq 5$ has a positive generalized ground state solution for any $\beta_{ij}\geq0$, $i\neq j$ (the purely cooperative case). Hence, the  case of $N=3$ is quite different from the higher-dimensional case $N\geq 5$  about the critical  system \eqref{mainequ1}. }
\end{remark}

\vskip0.3in

In subcritical case, when $d=2$ and $p\geq 2$, the author in \cite{Mandel=NoDEA=2015} showed that system \eqref{mainequ1} does not have any nontrivial generalized  ground state solutions if $\beta_{12}\in (0, b_0)$, where $b_0>0$;
When $d\geq 3$ and $p=2$, the authors in \cite{Correia/Oliveria/Tavares=JFA=2016} proved that system \eqref{mainequ1} does not have any nontrivial generalized  ground state solutions if $\beta_{ij}=b \in (0, b_1)$ (see
\cite[Theorem 1.7]{Correia/Oliveria/Tavares=JFA=2016}), where $b_1>0$.

\vskip0.12in
In subcritical case, it is easy to obtain the existence of ground state solutions. However, lack of compactness makes system \eqref{mainequ} very complicated. In this paper, we acquire the existence of ground state solutions by establishing a new energy estimate (see Lemma \ref{GSS energy estimate 1}). Then inspired from \cite{Correia/Oliveria/Tavares=JFA=2016} we show that the generalized ground state is semi-trivial for the weakly cooperative case. But the authors in \cite{Correia/Oliveria/Tavares=JFA=2016} take full use of the fact $p=2$, and the method can not be used directly to deal with system \eqref{mainequ} (the case $p=3$). Thus, we need some important modifications for our proof.

\medbreak
For the purely competitive case, we have the following theorem.
\begin{theorem}\label{existence-4-1}
Assume that \eqref{assumption-1}, \eqref{assumption -2} hold  and
\begin{equation}
\beta_{ij}\leq 0, ~\text{ for any } i\neq j.
\end{equation}
Then system \eqref{mainequ} has a positive  least energy  solution.
\end{theorem}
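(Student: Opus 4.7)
My plan is to argue by induction on the number of equations $d$, taking as the base case $d=1$ the existence of a positive least energy solution for the scalar Br\'{e}zis--Nirenberg problem \eqref{B-N} under $\lambda_1 \in (-\lambda_1(\Omega), -\lambda^*(\Omega))$. For the inductive step I take a minimizing sequence $\{\mathbf{u}_n\} \subset \mathcal{N}$ for $\mathcal{C}$. Since $\beta_{ij} \leq 0$ for $i \neq j$, both the cross-integrals $\int_\Omega |u_i|^3|u_j|^3$ and the squared norms $\|u_i\|_i^2$ are invariant under $u_{i,n} \mapsto |u_{i,n}|$, so I may assume $u_{i,n} \geq 0$. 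On $\mathcal{N}$ one has $J(\mathbf{u}_n) = \frac{1}{3}\sum_i \|u_{i,n}\|_i^2$, which controls the square of the $\mathbb{H}_d$-norm (using $\lambda_i > -\lambda_1(\Omega)$); hence $\{\mathbf{u}_n\}$ is bounded. Moreover the Nehari identities combined with $\beta_{ij} \leq 0$ give $\|u_{i,n}\|_i^2 \leq \beta_{ii}\|u_{i,n}\|_6^6$, which via Sobolev yields uniform lower bounds $\|u_{i,n}\|_i^2 \geq c_i > 0$ for every $i$. Applying Ekeland's variational principle on the smooth manifold $\mathcal{N}$ I upgrade $\{\mathbf{u}_n\}$ to a Palais--Smale sequence for $J$ and extract $u_{i,n} \rightharpoonup u_i$ in $H^1_0(\Omega)$, with strong convergence in $L^p(\Omega)$ for every $p<6$.

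The weak limit $\mathbf{u}$ is then a solution of \eqref{mainequ}, and from the strong maximum principle applied to the equation rewritten as $-\Delta u_i + (\lambda_i - u_i\sum_j \beta_{ij} u_j^3)u_i = 0$ each $u_i$ is either $\equiv 0$ or strictly positive in $\Omega$. If every $u_i$ is positive, then $\mathbf{u} \in \mathcal{N}$ and weak lower semicontinuity yields $J(\mathbf{u}) \leq \liminf J(\mathbf{u}_n) = \mathcal{C}$, so $\mathbf{u}$ is the desired least energy positive solution. It therefore remains to exclude the case in which $I := \{i : u_i \not\equiv 0\}$ is a proper subset of $\{1,\ldots,d\}$.

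Setting $v_{i,n} := u_{i,n} - u_i$, the Br\'{e}zis--Lieb lemma together with the strong $L^2$-convergence of $v_{i,n}$ (which annihilates the $\lambda_i$ lower-order terms) gives the splittings
\[
\|u_{i,n}\|_i^2 = \|u_i\|_i^2 + \|\nabla v_{i,n}\|_2^2 + o(1), \qquad \int_\Omega |u_{i,n}|^3 |u_{j,n}|^3 = \int_\Omega |u_i|^3|u_j|^3 + \int_\Omega |v_{i,n}|^3|v_{j,n}|^3 + o(1).
\]
Subtracting the identity satisfied by the weak solution $\mathbf{u}$ from the Nehari identity for $\mathbf{u}_n$ yields $\|\nabla v_{i,n}\|_2^2 = \sum_j \beta_{ij}\int_\Omega |v_{i,n}|^3|v_{j,n}|^3 + o(1)$ for each $i$. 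For $i \notin I$ the sign hypothesis $\beta_{ij}\leq 0$ reduces this to $\|\nabla v_{i,n}\|_2^2 \leq \beta_{ii}\|v_{i,n}\|_6^6 + o(1)$, and since $\|u_{i,n}\|_i^2 \geq c_i > 0$ forces $v_{i,n} \not\to 0$ in $H^1_0$, the Sobolev inequality produces the concentration quantum $\liminf \|\nabla v_{i,n}\|_2^2 \geq \beta_{ii}^{-1/2}\widetilde{S}^{3/2}$. Summing the energies one arrives at
\[
\mathcal{C} \;\geq\; \tfrac{1}{3}\sum_{i\in I}\|u_i\|_i^2 + \tfrac{1}{3}\sum_{i\notin I}\beta_{ii}^{-1/2}\widetilde{S}^{3/2} \;\geq\; \mathcal{C}_I + \tfrac{1}{3}\sum_{i\notin I}\beta_{ii}^{-1/2}\widetilde{S}^{3/2},
\]
where $\mathcal{C}_I$ denotes the analogous Nehari level of the $|I|$-equation sub-system (with the convention $\mathcal{C}_\emptyset := 0$), attained by the induction hypothesis.

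The contradiction closing the induction will be provided by the strict reverse inequality
\[
\mathcal{C} \;<\; \mathcal{C}_I + \tfrac{1}{3}\sum_{i\notin I}\beta_{ii}^{-1/2}\widetilde{S}^{3/2} \qquad \text{for every proper } I \subsetneq \{1,\ldots,d\},
\]
which I expect to be supplied by Propositions~\ref{Energy-comparing 1}--\ref{energy comparing c_q}, and this is the main obstacle. In higher dimensions $N\geq 4$ the standard proof constructs a test function by adding a truncated Aubin--Talenti bubble to the induction-provided sub-system minimizer, but in dimension three the bubble $U_\varepsilon$ decays only like $|x|^{-1}$ and does not lie in $H^1_0(\Omega)$, so a plain truncation loses too much in the lower-order $\lambda_i u^2$ term. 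The plan is therefore to use the specific three-dimensional Br\'{e}zis--Nirenberg test functions (which is precisely where the sharper hypothesis $\lambda_i < -\lambda^*(\Omega)$ is needed), concentrating $d-|I|$ such profiles at an interior point chosen away from the concentration region of the sub-system minimizer, project the resulting configuration back onto $\mathcal{N}$, and exploit that every cross-interaction is non-positive so that both the coupling with the sub-system minimizer and the final Nehari rescaling can only strictly lower the energy.
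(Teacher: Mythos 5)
Your overall skeleton --- minimizing sequence on $\mathcal{N}$, Brezis--Lieb splitting, and a dichotomy argument closed by the strict energy inequality $\mathcal{C} < \mathcal{C}_I + \frac{1}{3}\sum_{i\notin I}\beta_{ii}^{-1/2}\widetilde{S}^{3/2}$ --- coincides with the paper's Lemmas \ref{L6 norm -2}--\ref{achieve2} and Proposition \ref{Key estimate}, and the lower-bound chain $\mathcal{C} \geq \frac{1}{3}\sum_{i\in I}\|u_i\|_i^2 + \frac{1}{3}\sum_{i\notin I}\beta_{ii}^{-1/2}\widetilde{S}^{3/2} \geq \mathcal{C}_I + \cdots$ is correct. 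The gap is in how you propose to establish the strict inequality, which is precisely the technical heart of the proof.

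First, Propositions \ref{Energy-comparing 1} and \ref{energy comparing c_q} cannot supply it: they are stated and proved under $0<\beta_{ij}<K_2$ (resp.\ $<K_3$), i.e.\ the weakly cooperative case, and their proofs rely on strictly diagonally dominant matrices built from positive off-diagonal entries and on Lemma \ref{subsystem-maximum}, none of which carry over to $\beta_{ij}\leq 0$. The competitive-case estimate is a separate statement, Proposition \ref{Key estimate}.

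Second, the plan you sketch for proving it does not close. Placing $d-|I|$ copies of the Br\'ezis--Nirenberg profile $w_\varepsilon=\varphi U_\varepsilon$ at a common interior point introduces cross-terms $\beta_{ij}\int_\Omega |w_\varepsilon|^6$ between the new components for $i,j\notin I$, $i\neq j$, which are $O(1)$ rather than $o(1)$; since $\beta_{ij}<0$ these inflate the Nehari scalings $t_i$ and push the trial energy strictly above the additive bound $\sum_{i\notin I}\frac{1}{3}\beta_{ii}^{-1/2}\widetilde{S}^{3/2}$, so no contradiction is obtained when $d-|I|\geq 2$. Separating the bubbles spatially is not a fix: the cosine cutoff and the threshold $\lambda^*(\Omega)=\pi^2/(4R_0^2)$ use the largest inscribed ball, and $\Omega$ need not contain several disjoint balls of radius $R_0$; using smaller balls would require each $\lambda_i$ to be more negative than $-\lambda^*(\Omega)$, which is not assumed. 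Moreover, the sub-system minimizer $(u_1,\ldots,u_q)$ is a fixed, continuous $H^1_0$-function --- it does not concentrate --- so there is no ``concentration region'' to avoid; the paper's mechanism for the smallness of the cross-terms $\int_\Omega|u_i|^3|w_\varepsilon|^3 = O(\varepsilon^{3/2}|\ln\varepsilon|)$ is that $w_\varepsilon$ itself concentrates and $u_i\in C^0(\bar\Omega)$, not spatial separation. The paper resolves all of this by organizing the inequality of Proposition \ref{Key estimate} as its own induction on $d$: assuming it for all subsystems of size $\leq d-1$, the general proper subset reduces to $|I|=d-1$, and then a \emph{single} bubble is added to the minimizer of the $(d-1)$-subsystem; the one-bubble cross-terms are lower order and the bounded Nehari scalings $t_{\varepsilon,i}\in[r,R]$ are produced by the sign structure $\beta_{ij}\leq 0$. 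You should adopt this one-bubble-at-a-time inductive reduction rather than attempting the multi-bubble construction.
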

		
\begin{remark}
 {\rm Up to our knowledge, Theorem \ref{existence-4-1} is the first result to deal with the existence of  positive least energy  solutions of system \eqref{mainequ} for the purely competitive case.}
\end{remark}

We recall the paper \cite{ClappSzulkin}, where  the authors established energy estimate by induction on the number of equation, then they obtained the existence of least energy positive solutions for the purely competitive case when $N\geq 4$. Here, we established the corresponding energy estimate by using this idea for $N=3$ (see Proposition \ref{Key estimate}). However, we need more precise estimates due to the nature of the three-dimensional Br\'ezis-Nirenberg type problem. To the best of our knowledge, it is first time to establish this energy estimate (see Proposition \ref{Key estimate}) of system \eqref{mainequ} for the purely competitive case.

\subsection{Structure of the paper}

 Section \ref{Sec 2} is devoted to the proof of Theorem \ref{existence-1}, in subsection \ref{Sec 2.1}, we present a uniform energy estimate and some preliminary results; in subsection \ref{Sec 2.2}, we establish new energy estimates, see Proposition \ref{Energy-comparing 1} and Proposition \ref{energy comparing c_q}; in subsection \ref{Sec 2.3}, we will give the proof of Theorem \ref{existence-1} by the method of induction on the number of equations.
Section \ref{Sec 2.4} is devoted to the proof of Theorem \ref{existence-4}, in subsection \ref{Sec 3.1}, we introduce the limit system; in subsection \ref{Sec 3.2}, we give the proof of Theorem \ref{existence-4}.
Section \ref{Sec 4} is devoted to the proof of Theorem \ref{existence-4-1}.

\subsection{Further notations}
$\bullet$ The $L^p(\Omega)$ norms will be denoted by $|\cdot|_p$ , $1\leq p \leq \infty$.

$\bullet$ Set $$(\R^+)^d=\lbr{x=(x_1,...,x_d):x_i>0,\text{ for every } i=1,2,...,d}.$$ For a vector $\mathbf{X}=(x_1,...,x_d) \in \R^d$, denote  the transpose of $\mathbf{X}$ by $\mathbf{X}^T$  and define the norm by
\begin{equation*}
|\mathbf{X}|=\sqrt{x_1^2+\cdots+x_d^2}\ .
\end{equation*}

$\bullet$  For a subset $I\subset \lbr{1,\cdots,d}$ with  $|I|$$=q $, we denote the number of elements in set $I$ by $|I|$  and  define $$(u_i)_{i\in I}=(u_{i1},\cdots,u_{iq}),$$ where $I=\lbr{i_1,\cdots,i_q}$ and  $i_1<i_2<\cdots<i_q$.

$\bullet$ Let  $\widetilde{S}$ be the Sobolev best constant of $\mathcal{D}^{1,2}(\R^3)\hookrightarrow L^6(\R^3)$,
\begin{equation}\label{sobolev constant}
	\widetilde{S}=\inf_{u \in \mathcal{D}^{1,2}(\R^3) \setminus \lbr{0} } \cfrac{\int_{\R^3 } |\nabla u|^2 }{\sbr{\int_{\R^3} |u|^6}^{\frac{1}{3}}},
\end{equation}
where $\mathcal{D}^{1,2}(\R^3)=\lbr{u\in L^2(\R^3): |\nabla u| \in L^2(\R^3)}$ with norm $\left\|u \right\|_{\mathcal{D}^{1,2}}:=\sbr{\int_{\R^3}|\nabla u|^2 }^{\frac{1}{2}} $.

$\bullet$ Let \begin{equation} \label{S}
S:=\inf_{i=1,2,...,d}\inf_{u \in H_0^1(\Omega) \setminus \lbr{0} } \cfrac{\left\| u \right\|_i^2 }{\sbr{\int_{\Omega} |u|^6}^{\frac{1}{3}}}\ .
\end{equation}
Moreover, since $\lambda_{i} \in (-\lambda_{1}(\Omega),-\lambda^*(\Omega))$ we have
\begin{equation*}
S|u|_6^2 \leq \Ni{u} \leq |\nabla u|_2^2  \quad \forall u \in H_0^1(\Omega).
\end{equation*}
				
$\bullet$ We use ``$\to$'' and ``$\rightharpoonup$'' to denote the strong convergence and weak convergence in corresponding space respectively.

$\bullet$  
The capital letter $C$ will appear as a constant which may vary from line to line, and $C_1$, $C_2$, $C_3$ are fixed constants.

\section{ Least energy positive solutions for the weakly cooperative case}\label{Sec 2}

In this section, we present the proof of Theorem \ref{existence-1}.		
Given $I \subseteq \lbr{1,2,...,d}$ with  $|I|=q $, $1 \leq q \leq d$, we consider the following subsystem
\begin{equation}\label{subsystem1}
\begin{cases}
-\Delta u_i+\lambda_{i}u_i=\sum\limits_{j\in I} \beta_{ij}|u_j|^{3}|u_i|u_i  \  ~\text{ in } \Omega, \ i \in I,\\
u_i \in H_0^1(\Omega) ,\  i\in I,
\end{cases}
\end{equation}

and define
\begin{align}
&J_I(\mathbf{u}_I)=\frac{1}{2}\sum_{i\in I} \left\|u_i \right\|_i^2-\frac{1}{6}\sum_{i,j\in I} \int_{\Omega}\beta_{ij} \abs{u_i}^3\abs{u_j}^3 , \\
&\mathcal{N}_I=\lbr{\mathbf{u}_I \in \mathbb{H}_q:u_i \not\equiv 0\ \ \text{ and } \ \  \Ni{u_{i}}-\sum_{j\in I} \int_{\Omega}\beta_{ij}\abs{u_{i}}^3\abs{u_{j}}^3 =0, \ i\in I},\\
& \mathcal{C}_I=\inf_{\mathbf{u}_I\in \mathcal{N}_I}J_I(\mathbf{u_I}) =\inf_{\mathbf{u}_I\in \mathcal{N}_I} \frac{1}{3}\sum_{i\in I} \left\|u_i \right\|_i^2 .
\end{align}
Obviously, we have $\mathcal{C}=\mathcal{C}_{\lbr{1,\cdots,d}}$.

\subsection{Preliminary results} \label{Sec 2.1}

In this subsection, we present some preliminary lemmas, which are used to prove Theorem \ref{existence-1}.
Firstly, we show a uniform energy estimate for level $\mathcal{C}_I$.			
\begin{lemma}\label{energy uniformly estimate}
Take \begin{equation} \label{overline C}
\overline{C}=\frac{d}{3}\max_{1\leq i \leq d} \lbr{\frac{1}{\sqrt{\beta_{ii}}}}\widetilde{S}^{\frac{3}{2}},	\end{equation}
then for every $I \subseteq \lbr{1,2,...,d}$, there holds
$$
\mathcal{C}_I \leq \overline{C}.
$$
\end{lemma}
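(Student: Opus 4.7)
The plan is to exhibit, for every nonempty $I \subseteq \{1,\ldots,d\}$, an explicit test element of $\mathcal{N}_I$ whose energy is at most $\overline{C}$. The clean observation is that $\overline{C}$ already dominates $|I|$ copies of the three-dimensional Br\'ezis-Nirenberg ceiling,
$$\sum_{i \in I}\frac{\widetilde{S}^{3/2}}{3\sqrt{\beta_{ii}}} \leq \frac{|I|}{3}\max_{1\leq i \leq d}\frac{1}{\sqrt{\beta_{ii}}}\widetilde{S}^{3/2} \leq \overline{C},$$
so it is enough to build $\mathbf{u}_I \in \mathcal{N}_I$ whose energy asymptotically matches this sum.

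My construction decouples the system by enforcing disjoint supports. First I pick $|I|$ distinct points $\{y_i\}_{i\in I}\subset \Omega$ and a radius $r>0$ small enough that the balls $B_r(y_i)$ are pairwise disjoint and compactly contained in $\Omega$. In each ball I place a cut-off Aubin-Talenti bubble
$$v_{i,\varepsilon}(x):=\eta(x-y_i)\,U_\varepsilon(x-y_i),\qquad U_\varepsilon(x)=\frac{(3\varepsilon)^{1/4}}{(\varepsilon+|x|^2)^{1/2}},$$
with $\eta \in C_c^\infty(B_r(0))$ a standard cutoff equal to one near the origin. The classical $N=3$ asymptotics give $\|v_{i,\varepsilon}\|_i^2\to \widetilde{S}^{3/2}$ and $|v_{i,\varepsilon}|_6^6\to \widetilde{S}^{3/2}$ as $\varepsilon \to 0^+$ (the $|v_{i,\varepsilon}|_2^2 = O(\sqrt{\varepsilon})$ correction is harmless). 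Crucially, since the supports are pairwise disjoint, all coupling integrals $\int_\Omega|v_{i,\varepsilon}|^3|v_{j,\varepsilon}|^3$ vanish for $i\neq j$, so the Nehari system decouples into $|I|$ independent single-equation constraints, each solved explicitly by scaling $u_{i,\varepsilon}:=t_{i,\varepsilon}v_{i,\varepsilon}$ with $t_{i,\varepsilon}^4 = \|v_{i,\varepsilon}\|_i^2/(\beta_{ii}|v_{i,\varepsilon}|_6^6)$. A direct computation then yields
$$J_I(\mathbf{u}_{I,\varepsilon})=\frac{1}{3}\sum_{i\in I}\frac{(\|v_{i,\varepsilon}\|_i^2)^{3/2}}{\sqrt{\beta_{ii}\,|v_{i,\varepsilon}|_6^6}}\xrightarrow[\varepsilon\to 0^+]{}\frac{1}{3}\sum_{i\in I}\frac{\widetilde{S}^{3/2}}{\sqrt{\beta_{ii}}}\leq \overline{C},$$
and passing to the infimum over $\mathcal{N}_I$ gives $\mathcal{C}_I\leq \overline{C}$.

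The main (mild) obstacle is the insistence on disjoint supports: the Br\'ezis-Nirenberg minimizers $\omega_i$ from \eqref{Energy-BN} cannot be dropped directly into the system, because their supports cover all of $\Omega$ and the resulting Nehari system of $|I|$ coupled cubic equations in the dilation parameters $t_i$ is opaque. Using cut-off Aubin-Talenti bubbles in disjoint balls sidesteps this: we trade the strict improvement $m_i<\widetilde{S}^{3/2}/(3\sqrt{\beta_{ii}})$ for decoupling, which costs nothing here because $\overline{C}$ is already the looser non-strict bound.
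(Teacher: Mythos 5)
Your proof is correct and uses essentially the same decoupling-by-disjoint-supports idea as the paper: the paper takes arbitrary disjoint-support test functions, scales each to satisfy the uncoupled Nehari condition, and then invokes the domain-independence of the Sobolev constant $\widetilde{S}(\Omega')=\widetilde{S}$ (\cite[Prop.~1.43]{william=1996}), whereas you explicitly realize that infimum with cut-off Aubin--Talenti bubbles concentrating in disjoint balls. Both routes reduce $\mathcal{C}_I$ to $|I|$ independent copies of the single-equation Sobolev energy, so the arguments are interchangeable.
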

				
\begin{proof}
We follow the arguments of Lemma 2.1 in \cite{soave-hugo=JDE=2016} and Lemma 3.1 in \cite{TavaresYou2020} to prove this lemma. 
					
For every  $I \subseteq \lbr{1,2,...,d}$ with $|I|=q$,  we take $\widehat{u_i} \not\equiv 0$, $i\in I$ such that $\widehat{u_i}\cdot \widehat{u_j} \equiv 0 $, whenever $i \neq j$. Denote $\widetilde{u_i}=t_i\widehat{u_i}$, where $$t_i= \cfrac{\left\|\widehat{u_i}\right\|_i^{\frac{1}{2}} }{(\beta_{ii})^{\frac{1}{4}}|\widehat{u_i}|_{6}^\frac{3}{2}}  \quad \text{ for every } i\in I,$$ then $\widetilde{u_i} \not\equiv 0 $  and $\sbr{\widetilde{u_i}}_{i\in I} \in \mathcal{N}_I$. Thus, since $\lambda_{i} < -\lambda^*(\Omega)<0$, we infer that
\begin{align*}
\mathcal{C}_I &\leq  J_I(\sbr{\widetilde{u_i}}_{i\in I})=\frac{1}{3}\sum_{i\in I} \Ni{\widetilde{u_i}}=\frac{1}{3}\sum_{i\in I}  t_i^2\Ni{\widehat{u_i}}\\
&<\frac{1}{3} \sum_{i\in I}  \cfrac{1}{\sqrt{\beta_{ii}}} \cfrac{ |\nabla \widehat{u_i}|_2^3}{|\widehat{u_i}|_{6}^3}\\  &\leq \frac{1}{3}\max_{i\in I} \lbr{\frac{1}{\sqrt{\beta_{ii}}}} \sum_{i\in I} \cfrac{ |\nabla \widehat{u_i}|_2^3}{|\widehat{u_i}|_{6}^3}.
\end{align*}
Notice the choice of $ \widehat{u_i}$, we have
                     \begin{align*}
						\mathcal{C}_I &\leq \frac{1}{3}\max_{i\in I} \lbr{\frac{1}{\sqrt{\beta_{ii}}}} \inf_{\substack{\Omega \supset \Omega_i\neq \varnothing ,i\in I \\ \Omega_i \cap \Omega_j = \varnothing , i\neq j}} \sum_{i\in I} \widetilde{S}^{\frac{3}{2}}(\Omega_i).
					\end{align*}
On the other hand, for every open subset $\Omega^\prime$ of $\R^3$, by \cite[Propostion 1.43]{william=1996} we have,
$$
\widetilde{S}(\Omega^\prime)=\inf_{u \in H_0^1(\Omega^\prime) \setminus \lbr{0} } \cfrac{\int_{\Omega^\prime } |\nabla u|^2 }{\sbr{\int_{\Omega^\prime} |u|^6}^{\frac{1}{3}}}=\widetilde{S},
$$
where $\widetilde{S}$ is defined by \eqref{sobolev constant}.
Therefore,			
\begin{align*}
\mathcal{C}_I &\leq \frac{q}{3}\max_{1\leq i \leq d} \lbr{\frac{1}{\sqrt{\beta_{ii}}}} \widetilde{S}^{\frac{3}{2}} \leq \frac{d}{3}\max_{1\leq i \leq d} \lbr{\frac{1}{\sqrt{\beta_{ii}}}}\widetilde{S}^{\frac{3}{2}},
\end{align*}
which yields that $\mathcal{C}_I\leq \overline{C}$, where $\overline{C}$ is defined in \eqref{overline C}.
				\end{proof}
				
				Define
				\begin{equation} \label{K_1}
					K_1=\cfrac{7S^3}{12(6 \overline{C})^2},
				\end{equation}where $S$ is defined in \eqref{S}.
				
\begin{lemma} \label{L^6norm-estimate}
If $$\beta_{ii} > 0 \quad \forall i=1,2,...,d, \quad 0<\beta_{ij} < K_1 \quad  \forall i,j = 1,2,...,d, i \neq j ,$$  and  for every $I \subseteq \lbr{1,2,...,d}$,  $\mathbf{u}\in \mathcal{N}_I$ with $J_I(\mathbf{u})\leq 2 \overline{C}$, then there exists constant $C_2>C_1>0$ dependent only  on $K_1,\lambda_i,\beta_{ii}$, such that $$C_1 \leq \int_{\Omega}|u_i|^6  \leq C_2 \quad \text{ for every } i \in I.$$
				\end{lemma}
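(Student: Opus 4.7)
The plan is to derive the upper bound directly from the energy hypothesis and the Sobolev embedding, then to extract the lower bound from the Nehari identity by H\"older's inequality, feeding the upper bound back in.

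For the upper bound: on $\mathcal{N}_I$ one has $J_I(\mathbf{u})=\frac{1}{3}\sum_{i\in I}\|u_i\|_i^2$, so the hypothesis $J_I(\mathbf{u})\le 2\overline{C}$ forces each $\|u_i\|_i^2\le 6\overline{C}$. Combined with the Sobolev bound $S|u_i|_6^2\le\|u_i\|_i^2$ recorded in the Further Notations, this yields $|u_i|_6^6\le (6\overline{C}/S)^3$, which I take as $C_2$.

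For the lower bound, apply H\"older to each term of the Nehari identity,
\[
\|u_i\|_i^2=\sum_{j\in I}\beta_{ij}\int_{\Omega}|u_i|^3|u_j|^3\le|u_i|_6^3\sum_{j\in I}\beta_{ij}|u_j|_6^3,
\]
combine with Sobolev $S|u_i|_6^2\le\|u_i\|_i^2$, divide through by $|u_i|_6^2$ (permitted since $u_i\not\equiv 0$), and bound each $|u_j|_6^3\le C_2^{1/2}$ by the upper bound just established and each off-diagonal $\beta_{ij}$ by the hypothesis $\beta_{ij}<K_1$. This produces
\[
S\le|u_i|_6\,(\beta_{ii}+(d-1)K_1)\,C_2^{1/2},
\]
a uniform positive lower bound on $|u_i|_6$, whose sixth power gives the desired $C_1$. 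The argument is essentially computational with no serious obstacle; the key observations are that $\beta_{ij}>0$ keeps every Nehari contribution nonnegative and on the correct side for a H\"older upper bound, and that $u_i\not\equiv 0$ permits the division by $|u_i|_6^2$. The particular numerical value of $K_1$ in \eqref{K_1} plays no role here (any fixed upper bound on $\beta_{ij}$ would furnish $C_1,C_2$); it is presumably tuned for the sharper energy comparisons that appear later.
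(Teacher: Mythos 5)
Your proof is correct and follows essentially the same route as the paper: upper bound by Sobolev from $\sum_i\|u_i\|_i^2\le 6\overline{C}$, lower bound by applying H\"older to the Nehari identity, bounding $|u_j|_6^3$ by the upper bound and the coefficients by $\max\{K_1,\beta_{ii}\}$, and dividing by $|u_i|_6^2$. The only difference is how the constant is packaged ($\beta_{ii}+(d-1)K_1$ vs.\ $d\max\{K_1,\beta_{ii}\}$), which is immaterial.
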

				
				\begin{proof}
					For any $\mathbf{u} \in \mathcal{N}_I $ with $J_I(\mathbf{u})\leq 2 \overline{C}$,  we have
					\begin{equation} \label{norm 6C}
						\sum_{i \in I} \Ni{u_i} \leq 6 \overline{C}.
					\end{equation}
Therefore,
$$
S\left(\int_{\Omega}|u_i|^6  \right)^{\frac{1}{3}}\leq \Ni{u_i }\leq \sum\limits_{i\in I}\Ni{u_i} \leq 6 \overline{C},
$$
that is  $ \int_{\Omega}|u_i|^6  \leq C_2$.
On the other hand, we have
\begin{align*}
S\sbr{\int_{\Omega}|u_i|^6 }^{\frac{1}{3}}\leq \Ni{u_i }&=\sum_{j\in I} \int_{\Omega} \beta_{ij}|u_i|^3|u_j|^3 \leq  d\max_{i=1,2,...,d}\lbr{K_1,\beta_{ii}}\left(\frac{6\overline{C}}{S}\right)^{\frac{3}{2}}\sbr{\int_{\Omega}|u_i|^6 }^{\frac{1}{2}},
\end{align*}
which yields that  $ \int_{\Omega}|u_i|^6\geq C_1$.
\end{proof}
	
Before proceeding, we introduce some notations. For every $I \subseteq \lbr{1,2,...,d}$ with  $|I|$$ =q $ , we define  the matrix $A_I(\mathbf{u}) = (a_{ij}(\mathbf{u}))_{(i,j)\in I^2}$ by
				\begin{equation} \label{defi of A_q and B_q}
					\begin{aligned}
						&a_{ii}(\mathbf{u})=4\int_{\Omega}\beta_{ii} |u_i|^6 +\sum_{\substack{j\in I , j\neq i}} \int_{\Omega} \beta_{ij}|u_i|^3|u_j|^3 , \ i \in I, \\
						&a_{ij}(\mathbf{u})=3\int_{\Omega}\beta_{ij}|u_i|^3|u_j|^3 , \ i,j\in I,  i\neq j.\\
					\end{aligned}
				\end{equation}
Set	
\begin{equation*}
\Gamma_I=\lbr{\mathbf{u}\in \mathbb{H}_q:A_I(\mathbf{u}) \text{ is strictly diagonally dominant }}.
\end{equation*}

\begin{remark}\label{remark2.1}
{\rm
(1) 
For any $\mathbf{u} \in \Gamma_I$, we know that $A_I(\mathbf{u})$ is positive definite by Gershgorin circle theorem.
						
(2) Since there holds the embedding $H_0^1(\Omega)\hookrightarrow L^6(\Omega)$, it is not difficult to verify that  $\Gamma_I$ is open in $\mathbb{H}_q$.
						
(3) The set $\mathcal{N}_I \cap \Gamma_I$  is not empty. In fact, following  the proof of Lemma \ref{energy uniformly estimate}, it is easy to see that   $\sbr{\widetilde{u_i}}_{i\in I} \in \mathcal{N}_I \cap \Gamma_I$.
}
\end{remark}

The following lemma shows that $\mathcal{N}_I \cap\Gamma_I$ is a natural constraint  for the weakly cooperative  case.

\begin{lemma}\label{achieve and crtical point}
Assume that
$$
\beta_{ii} > 0 \quad \forall i=1,2,...,d, \quad 0<\beta_{ij} < K_1 \quad  \forall i,j = 1,2,...,d, i \neq j,
$$
then for every  $I \subseteq \lbr{1,2,...,d}$,  the set $\mathcal{N}_I \cap \Gamma_I$ is a smooth manifold.
Moreover, the constrained critical points of $J_I$ on $\mathcal{N}_I\cap\Gamma_I$ are free critical points of $J_I$. In other words, $\mathcal{N}_I \cap\Gamma_I$ is a natural constraint.
				\end{lemma}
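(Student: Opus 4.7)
The plan is to set up the constraint map
$$G:\mathbb{H}_q\to\R^q,\quad G(\mathbf{u})=(G_i(\mathbf{u}))_{i\in I}, \quad G_i(\mathbf{u}):=\Ni{u_i}-\sum_{j\in I}\beta_{ij}\int_\Omega|u_i|^3|u_j|^3,$$
and apply the implicit function theorem together with the Lagrange multiplier rule. Each $G_i$ is $C^2$ on $\mathbb{H}_q$ thanks to $H_0^1(\Omega)\hookrightarrow L^6(\Omega)$. Since the open conditions $u_i\not\equiv 0$ combined with $\mathbf{u}\in\Gamma_I$ (Remark~\ref{remark2.1}(2)) cut out an open subset of $\mathbb{H}_q$, it suffices to show that at every $\mathbf{u}\in\mathcal{N}_I\cap\Gamma_I$ the linear forms $\{dG_i(\mathbf{u})\}_{i\in I}$ on $\mathbb{H}_q$ are linearly independent.

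The key step is to pair $dG_i(\mathbf{u})$ against the scaling directions $e_k:=(0,\ldots,u_k,\ldots,0)\in\mathbb{H}_q$ (with $u_k$ in the $k$-th slot), for $k\in I$. A direct differentiation, combined with the Nehari identity $\Ni{u_i}=\sum_{j\in I}\beta_{ij}\int_\Omega|u_i|^3|u_j|^3$ valid on $\mathcal{N}_I$, yields
$$\langle dG_i(\mathbf{u}),e_k\rangle=-a_{ik}(\mathbf{u}),\qquad i,k\in I.$$
Hence the $|I|\times|I|$ matrix $[\langle dG_i(\mathbf{u}),e_k\rangle]_{i,k\in I}$ coincides with $-A_I(\mathbf{u})$, which is invertible on $\Gamma_I$ by strict diagonal dominance and the Gershgorin circle theorem (Remark~\ref{remark2.1}(1)). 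This forces the $dG_i$'s to be linearly independent, so $\mathcal{N}_I\cap\Gamma_I$ is a smooth submanifold of codimension $|I|$.

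For the natural-constraint property, let $\mathbf{u}\in\mathcal{N}_I\cap\Gamma_I$ be a constrained critical point of $J_I$. Lagrange multipliers provide $\mu_i\in\R$, $i\in I$, such that $dJ_I(\mathbf{u})=\sum_{i\in I}\mu_i\,dG_i(\mathbf{u})$. Pairing with $e_k$, the symmetry $\beta_{ij}=\beta_{ji}$ gives $\langle dJ_I(\mathbf{u}),e_k\rangle=G_k(\mathbf{u})=0$, while the right-hand side equals $-\sum_{i\in I}\mu_i a_{ik}(\mathbf{u})$. Together with $a_{ik}=a_{ki}$, this reads $A_I(\mathbf{u})(\mu_i)_{i\in I}=\mathbf{0}$, and invertibility forces all $\mu_i=0$. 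Therefore $dJ_I(\mathbf{u})=0$ and $\mathbf{u}$ is a free critical point. The main obstacle here is really only the bookkeeping in the derivative computation that links the component-wise dilations $e_k$ to the matrix entries $a_{ik}$; once that identification is made, the strict diagonal dominance built into $\Gamma_I$ simultaneously delivers both the manifold structure and the vanishing of the Lagrange multipliers.
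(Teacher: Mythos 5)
Your proposal is correct and follows essentially the same route as the paper: you pair $dG_i(\mathbf{u})$ against the component dilations $e_k=(0,\ldots,u_k,\ldots,0)$ and identify the resulting Gram-type matrix with $\pm A_I(\mathbf{u})$, whose invertibility on $\Gamma_I$ (strict diagonal dominance + Gershgorin) gives both the submersion property and the vanishing of the Lagrange multipliers. The paper writes this via surjectivity of $\widehat G_{\mathbf u}$ using test directions $-t_k u_k$ rather than linear independence of the $dG_i$, but this is only a cosmetic difference of sign and phrasing.
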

				
				\begin{proof}

						For every  $I \subseteq \lbr{1,2,...,d}$ with  $|I|$$ =q $,  we take $\mathbf{u}=\sbr{u_i}_{i\in I}\in \mathcal{N}_I \cap \Gamma_I $, we define
						\begin{equation} \label{defi of G_i}
							G_i(\mathbf{u}):=\Ni{u_{i}}-\sum_{j\in I} \int_{\Omega}\beta_{ij}\abs{u_{i}}^3\abs{u_{j}}^3 , \quad i \in I.
						\end{equation}
					By a direct computation, for every $\mathbf{v}=(v_i)_{i\in I} \in \mathbb{H}_q$ we obtain
					\begin{equation} \label{2.1}
						G'_i(\mathbf{u})\mathbf{v}=2\int_{\Omega}\sbr{ \nabla u_i \cdot \nabla v_i +\lambda_i u_iv_i} -3\sum_{j\in I}\sbr{\int_{\Omega} \beta_{ij} |u_i||u_j|^3u_i v_i+\int_{\Omega} \beta_{ij} |u_j||u_i|^3u_j v_j }.
					\end{equation}
					
					We claim that the set $\mathcal{N}_I \cap \Gamma_I$ is a smooth manifold of codimension $q$ in a neighborhood of $\mathbf{u}$ in $\mathbb{H}_q$. To verify this, we take $\mathbf{u}=\sbr{u_i}_{i\in I}\in \mathcal{N}_I \cap \Gamma_I  $ and  prove  the map $\widehat{G}_{\mathbf{u}}: \mathbb{H}_q \to \R^q$ is a surjective as linear operator, where
					\begin{equation}
						\widehat{G}_{\mathbf{u}}(\mathbf{v})=(G'_i(\mathbf{u})\mathbf{v})_{i\in I}.
					\end{equation}
Note that $\mathbf{u}\in\mathcal{N}_I$. Then take $v_i=-t_iu_i$, we have
					\begin{equation}
			\begin{aligned}
				G'_i(\mathbf{u})\mathbf{v}
								& =\sbr{ -2\Ni{u_i}+3 \sum_{j\in I}\int_{\Omega}\beta_{ij}\abs{u_{i}}^3\abs{u_{j}}^3 }t_i+3\sum_{j\in I}\sbr{\int_{\Omega}\beta_{ij}\abs{u_{i}}^3\abs{u_{j}}^3 } t_j\\
								&=\sbr{ 4\int_{\Omega}\beta_{ii}|u_i|^6+ \sum_{\substack{j\in I , j\neq i}}\int_{\Omega}\beta_{ij}\abs{u_{i}}^3\abs{u_{j}}^3 }t_i+3\sum_{\substack{j \in I , j\neq i}}\sbr{\int_{\Omega}\beta_{ij}\abs{u_{i}}^3\abs{u_{j}}^3  }t_j.
						\end{aligned}
					\end{equation}
					Hence, we see that
\begin{equation}\label{2.6}
(G'_i(\mathbf{u})\mathbf{v})_{i\in I}^T=A_I(\mathbf{u})\mathbf{t},
\end{equation}
where $A_I(\mathbf{u})$ is defined in \eqref{defi of A_q and B_q}, $\mathbf{t}=\sbr{t_i}_{i\in I}^T \in \R^q$ and $\mathbf{v}=(-t_iu_i)_{i\in I}$. Since $\mathbf{u} \in \Gamma_I$,  the matrix $A_I(\mathbf{u})$ is strictly diagonally dominant and from Remark \ref{remark2.1} (1)  that $A_I(\mathbf{u})$ is positive definite. Hence, $A_I(\mathbf{u})$ is non-singular. Then for any $\mathbf{h}=(h_i)_{i\in I} \in \R^q$,  there exists $\mathbf{v}'=(-s_iu_i)_{i\in I}$ such that
					$$	(G'_i(\mathbf{u})\mathbf{v}')_{i\in I}^T=\mathbf{h},$$ where $\mathbf{s}=A_I(\mathbf{u})^{-1}\mathbf{h}$. Therefore, the claim is true.
					
Finally, we show that $\mathcal{N}_I \cap \Gamma_I$ is a natural constraint.
Assume that  $\mathcal{C}_I$ is achieved by $\mathbf{u}=(u_i)_{i\in I}\in \mathcal{N}_I \cap \Gamma_I$. By Remark \ref{remark2.1} (2), the constraint  $\mathcal{N}_I \cap \Gamma_I$ is an open subset of $\mathcal{N}_I$ in the topology of $\mathbb{H}_q$. Thus the function $\mathbf{u}$ is an inner critical point of $J_I$ in an open subset of $\mathcal{N}_I$, and in particular it is a constrained critical point of $J_I$ on $\mathcal{N}_I$.
					Since the set $\mathcal{N}_I \cap \Gamma_I$ is a smooth manifold of codimension $q$ in a neighborhood of $\mathbf{u}$ in $\mathbb{H}_q$, then by the Lagrange multipliers rule there exists $\mu_i\in \R$ , $ i\in I $ such that
					\begin{equation}\label{2.7}
						J_I'(\mathbf{u})-\sum_{i\in I} \mu_iG_i'(\mathbf{u})=0,
					\end{equation}
where $G_i(\mathbf{u})$ is defined in \eqref{defi of G_i}.
Testing  \eqref{2.7} by $(0,...,u_i,...,0)$ for $i\in I$ and thanks to  $G_i(\mathbf{u})=0$ for every $i \in I$, we have
\begin{equation}
\sbr{4\int_{\Omega}\beta_{ii}|u_i|^6  + \sum_{\substack{j\in I ,j\neq i}} \int_{\Omega} \beta_{ij}|u_i|^3|u_j|^3 } \mu_i+ 3\sum_{\substack{j\in I, j\neq i}}\sbr{\int_{\Omega} \beta_{ij}|u_i|^3|u_j|^3 }\mu_j =0.
\end{equation}
					Hence, we have $A_I(\mathbf{u})\boldsymbol{\mu}=\mathbf{0}$, $\boldsymbol{\mu}=(\mu_i)_{i\in I}^T$.  From the above arguments, we know that the matrix $A_I(\mathbf{u})$ is non-singular, then $\mu_i=0$ for all $i\in I$. Combining this with \eqref{2.7}, we have $J_I'(\mathbf{u})=0$. That is, $\mathbf{u}$ is a free critical point of $J_I$ on $\mathbb{H}_q$, which means that $\mathcal{N}_I \cap \Gamma_I$ is a natural constraint.
				\end{proof}	
					
					\begin{lemma} \label{diagonally dominant}
						Assume that  $$\beta_{ii} > 0 \quad \forall i=1,2,...,d, \quad 0<\beta_{ij} < K_1 \quad  \forall i,j = 1,2,...,d, i \neq j ,$$ then  we have
						$$
						\mathcal{N}_I \cap \lbr{\mathbf{u}\in \mathbb{H}_q: J_I(\mathbf{u})\leq 2 \overline{C}} \subset \Gamma_I.
						$$
						Moreover, the constrained critical points of $J_I$ on $\mathcal{N}_I$ satisfying $J_I(\mathbf{u})\leq 2 \overline{C}$ are free critical points of $J_I$.
					\end{lemma}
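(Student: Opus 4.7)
The plan is to verify the pointwise diagonal dominance
\begin{equation*}
a_{ii}(\mathbf{u}) > \sum_{j\in I,\,j\neq i}|a_{ij}(\mathbf{u})|
\end{equation*}
on the set $\mathcal{N}_I \cap \{J_I(\mathbf{u})\leq 2\overline{C}\}$. Substituting the explicit expressions from \eqref{defi of A_q and B_q} and cancelling the shared term $\sum_{j\neq i}\beta_{ij}\int_\Omega|u_i|^3|u_j|^3$ appearing in both $a_{ii}(\mathbf{u})$ and the off-diagonal sum, this reduces to the single inequality
\begin{equation*}
4\beta_{ii}\int_\Omega |u_i|^6 > 2\sum_{j\in I,\,j\neq i}\beta_{ij}\int_\Omega |u_i|^3|u_j|^3
\end{equation*}
for every $i\in I$.

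Next I would apply Lemma \ref{L^6norm-estimate}: for any $\mathbf{u}\in\mathcal{N}_I$ with $J_I(\mathbf{u})\leq 2\overline{C}$ one has the uniform bounds $C_1\leq\int_\Omega|u_i|^6\leq C_2$ for each $i\in I$. Cauchy--Schwarz then controls the right-hand side via $\int_\Omega|u_i|^3|u_j|^3\leq |u_i|_6^3|u_j|_6^3\leq C_2$. Combining these with the hypothesis $\beta_{ij}<K_1$ for $i\neq j$ reduces the target to the purely numerical inequality
\begin{equation*}
4\beta_{ii}C_1 > 2(d-1)K_1 C_2, \qquad i\in I,
\end{equation*}
and the particular value $K_1=\dfrac{7S^3}{12(6\overline{C})^2}$ fixed in \eqref{K_1} is calibrated precisely so that, after substituting the explicit expressions of $C_1,C_2$ coming from the proof of Lemma \ref{L^6norm-estimate} (together with $\overline{C}=\frac{d}{3}\max_i\beta_{ii}^{-1/2}\widetilde{S}^{3/2}$), this inequality holds.

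The ``moreover'' clause is then an immediate consequence: any constrained critical point $\mathbf{u}$ of $J_I$ on $\mathcal{N}_I$ with $J_I(\mathbf{u})\leq 2\overline{C}$ lies in $\mathcal{N}_I\cap\Gamma_I$ by the inclusion just proved, and Lemma \ref{achieve and crtical point} guarantees that such a constrained critical point is in fact a free critical point of $J_I$ on $\mathbb{H}_q$. The main obstacle is the bookkeeping in the numerical step: because $C_1$ itself depends on $K_1$ through the constant $M=\max\{K_1,\max_i\beta_{ii}\}$ used in Lemma \ref{L^6norm-estimate}, the smallness condition is effectively self-referential, and the specific coefficient $7/(12\cdot 36)$ in \eqref{K_1} is chosen exactly to close the loop.
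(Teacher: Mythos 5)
Your reduction to the target inequality
\begin{equation*}
4\beta_{ii}\int_\Omega |u_i|^6 \;>\; 2\sum_{j\in I,\,j\neq i}\beta_{ij}\int_\Omega |u_i|^3|u_j|^3,\qquad i\in I,
\end{equation*}
is correct, and so is the treatment of the ``moreover'' clause via Lemma \ref{achieve and crtical point}. However, the central step is missing. After bounding $\int_\Omega|u_i|^6\geq C_1$ and $\int_\Omega|u_i|^3|u_j|^3\leq C_2$ you declare the resulting numerical inequality $4\beta_{ii}C_1>2(d-1)K_1C_2$ to be ``calibrated precisely'' by the choice of $K_1$, but you never check it, and it actually fails. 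Tracing the proof of Lemma~\ref{L^6norm-estimate}, one has $C_2=(6\overline{C}/S)^3$ and $C_1=\bigl(S^{5/2}\big/\bigl(d\max\{K_1,\max_i\beta_{ii}\}(6\overline{C})^{3/2}\bigr)\bigr)^6$, so $C_1/C_2$ scales like $(6\overline{C})^{-12}$ (up to harmless factors), whereas $K_1\sim(6\overline{C})^{-2}$. Substituting $6\overline{C}=2d\max_i\beta_{ii}^{-1/2}\widetilde{S}^{3/2}$ shows that your required inequality would force an estimate of the form $\frac{7}{12}(2d)^{10}(d-1)d^{6}\lesssim 1$, which is false for every $d\geq 2$. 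The ``self-referential'' loop you flag does not in fact close for the $K_1$ of \eqref{K_1}: that constant was designed for a different inequality, not yours.

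What is missing is the use of the Nehari constraint $\mathbf{u}\in\mathcal{N}_I$ to make the comparison \emph{homogeneous in $\|u_i\|_i^2$} rather than between fixed constants. Since $\|u_i\|_i^2=\beta_{ii}\int_\Omega|u_i|^6+\sum_{j\neq i}\beta_{ij}\int_\Omega|u_i|^3|u_j|^3$, the target is equivalent to
\begin{equation*}
4\Ni{u_i} \;>\; 6\sum_{j\in I,\,j\neq i}\int_\Omega\beta_{ij}|u_i|^3|u_j|^3.
\end{equation*}
Then one bounds each cross term by $\int_\Omega|u_i|^3|u_j|^3\leq S^{-3}\|u_i\|_i^3\|u_j\|_j^3$, and uses $\sum_{j\in I}\|u_j\|_j^2\leq 6\overline{C}$ (from $J_I\leq 2\overline{C}$) to get $\|u_i\|_i\leq(6\overline{C})^{1/2}$ and $\sum_{j\neq i}\|u_j\|_j^3\leq(6\overline{C})^{3/2}$, hence $\sum_{j\neq i}\|u_i\|_i^3\|u_j\|_j^3\leq(6\overline{C})^2\Ni{u_i}$. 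This gives a bound on the right-hand side of the form $\frac{6K_1}{S^3}(6\overline{C})^2\Ni{u_i}=\frac{7}{2}\Ni{u_i}$ -- and here is where $K_1=\frac{7S^3}{12(6\overline{C})^2}$ is actually calibrated -- leaving a margin of at least $\frac{1}{2}\Ni{u_i}$. Lemma~\ref{L^6norm-estimate} is then invoked only at the very end, via $\frac12\Ni{u_i}\geq\frac12 SC_1^{1/3}>0$, to quantify strict positivity; it is never asked to carry the main comparison.
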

									
					\begin{proof}
						Take $\mathbf{u} \in \mathcal{N}_I \cap \lbr{\mathbf{u}\in \mathbb{H}_q: J_I(\mathbf{u})\leq 2\ \overline{C}}$.	 We will  prove that $A_I(\mathbf{u})$ is strictly diagonally dominant, that is $$4\int_{\Omega}\beta_{ii} |u_i|^6 +\sum_{\substack{j\in I , j\neq i}} \int_{\Omega} \beta_{ij}|u_i|^3|u_j|^3 -3\sum_{\substack{j\in I , j\neq i}}\left| \int_{\Omega} \beta_{ij}|u_i|^3|u_j|^3 \right| > 0 , \ i \in I.$$
						Notice that $\beta_{ij}>0$ and $\mathbf{u}\in \mathcal{N}_I$ , we only need to show
						$$4\Ni{u_i}-6\sum_{\substack{j\in I , j\neq i}} \int_{\Omega} \beta_{ij}|u_i|^3|u_j|^3 >0, \ i \in I.$$	
						In fact, thanks to the choice of $K_1$, we have
						\begin{align*}
							6\sum_{\substack{j\in I , j\neq i}} \int_{\Omega} \beta_{ij}|u_i|^3|u_j|^3
							\leq \frac{6K_1}{S^3}\sum_{\substack{j\in I , j\neq i}} \left\| u_i\right\|_i^3 \left\| u_j\right\|_j^3
							\leq \frac{6K_1}{S^3} (6 \overline{C})^2 \Ni{u_i} \leq \frac{7}{2} \Ni{u_i}.
						\end{align*}
						Thus, by Lemma \ref{L^6norm-estimate} we have
						\begin{equation}
							4\Ni{u_i}-6\sum_{\substack{j\in I , j\neq i}} \int_{\Omega} \beta_{ij}|u_i|^3|u_j|^3  \geq \frac{1}{2} \Ni{u_i} \geq \frac{1}{2} S\sbr{\int_{\Omega} |u_i|^6 }^{\frac{1}{3}}\geq \frac{1}{2}SC_1^{\frac{1}{3}} .
						\end{equation}
						It follows that \begin{equation} \label{es-2}
							4\int_{\Omega}\beta_{ii} |u_i|^6 +\sum_{\substack{j\in I , j\neq i}} \int_{\Omega} \beta_{ij}|u_i|^3|u_j|^3 -3\sum_{\substack{j\in I , j\neq i}} \left|\int_{\Omega} \beta_{ij}|u_i|^3|u_j|^3 \right| \geq \frac{1}{2}SC_1^{\frac{1}{3}} >0,
						\end{equation}
which means that $A_I(\mathbf{u})$ is strictly diagonally dominant. Therefore,
$$
\mathcal{N}_I \cap \lbr{\mathbf{u}\in \mathbb{H}_q: J_I(\mathbf{u})\leq 2 \overline{C}} \subset \Gamma_I,
$$
and so
$$
\mathcal{N}_I \cap \lbr{\mathbf{u}\in \mathbb{H}_q: J_I(\mathbf{u})\leq 2 \overline{C}} \subset \mathcal{N}_I \cap\Gamma_I.
$$
						By Lemma \ref{achieve and crtical point} we know that the constrained critical points of $J_I$ on $\mathcal{N}_I$  satisfying $J_I(\mathbf{u})\leq 2 \overline{C}$ are free critical points of $J$.
						 This completes the proof.
					\end{proof}

Next, we construct a Palais-Smale sequence at level $\mathcal{C}_I$.		
\begin{lemma}{\rm (Existence of Palais-Smale sequence)} \label{exist of ps sequence}
\ Assume that
$$
\beta_{ii} > 0 \quad \forall i=1,2,...,d, \quad 0<\beta_{ij} < K_1 \quad  \forall i,j = 1,2,...,d, i \neq j.
$$
Then for every $I \subseteq \lbr{1,2,...,d}$, there exists a sequence $\lbr{\mathbf{u}_n}\subset \mathcal{N}_I$ satisfying
$$
\lim_{n \to \infty}J_I(\mathbf{u}_n)=\mathcal{C}_I ,\quad \lim_{n \to \infty}J_I^{\prime}(\mathbf{u}_n)=0.
$$
				\end{lemma}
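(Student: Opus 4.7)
The plan is a three-step construction: restrict to a smooth manifold, apply Ekeland's variational principle there, and then eliminate the Lagrange multipliers by exploiting the strict diagonal dominance that the smallness hypothesis on the $\beta_{ij}$'s guarantees.

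First I would take any minimizing sequence $\{\mathbf{u}_n\}\subset\mathcal{N}_I$ for $\mathcal{C}_I$. Since $\mathcal{C}_I\le\overline{C}$ by Lemma \ref{energy uniformly estimate}, for $n$ large we have $J_I(\mathbf{u}_n)\le 2\overline{C}$, so Lemma \ref{diagonally dominant} places $\mathbf{u}_n$ in the smooth codimension-$|I|$ submanifold $\mathcal{N}_I\cap\Gamma_I$ of $\mathbb{H}_q$ supplied by Lemma \ref{achieve and crtical point}. The sublevel set $\{\mathbf{u}\in\mathcal{N}_I:J_I(\mathbf{u})\le 2\overline{C}\}$ is closed in $\mathbb{H}_q$ (continuity gives $G_i(\mathbf{u})=0$ at any limit, while Lemma \ref{L^6norm-estimate} ensures $|u_i|_6^6\ge C_1$, so no component can collapse in the limit), hence it is a complete metric space sitting inside the manifold. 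Ekeland's variational principle applied on this complete set produces a sequence $\{\mathbf{v}_n\}$ with $J_I(\mathbf{v}_n)\to\mathcal{C}_I$ and with the gradient of $J_I$ restricted to $T_{\mathbf{v}_n}(\mathcal{N}_I\cap\Gamma_I)$ tending to zero.

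The last step, which I expect to be the main obstacle, is passing from the constrained to the free Palais-Smale condition. By the Lagrange multiplier rule on the manifold of Lemma \ref{achieve and crtical point}, there exist $\mu_{i,n}\in\R$ such that
\[
J_I'(\mathbf{v}_n)-\sum_{i\in I}\mu_{i,n}G_i'(\mathbf{v}_n)\to 0\quad\text{in }\mathbb{H}_q^{*}.
\]
Testing this relation against $(0,\ldots,v_{i,n},\ldots,0)$ and using $G_i(\mathbf{v}_n)=0$ exactly as in \eqref{2.6} reduces the multiplier equations to a linear system $A_I(\mathbf{v}_n)\boldsymbol{\mu}_n=o(1)$ in $\R^{|I|}$, with $\boldsymbol{\mu}_n=(\mu_{i,n})_{i\in I}^T$. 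The quantitative diagonal dominance furnished by \eqref{es-2}, together with the upper bound $|v_{i,n}|_6^6\le C_2$ of Lemma \ref{L^6norm-estimate}, yields a uniform lower bound for the smallest eigenvalue of $A_I(\mathbf{v}_n)$ via the Gershgorin circle theorem; this uniform invertibility forces $\boldsymbol{\mu}_n\to 0$. Since $\{\mathbf{v}_n\}$ is bounded in $\mathbb{H}_q$ (from $J_I(\mathbf{v}_n)=\tfrac{1}{3}\sum_i\|v_{i,n}\|_i^2\le 2\overline{C}$ on $\mathcal{N}_I$), the operators $G_i'(\mathbf{v}_n)$ are bounded in $\mathbb{H}_q^{*}$, so the Lagrange-multiplier contribution vanishes in norm and $J_I'(\mathbf{v}_n)\to 0$ in $\mathbb{H}_q^{*}$, which delivers the required Palais-Smale sequence at level $\mathcal{C}_I$.
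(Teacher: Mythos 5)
Your proposal matches the paper's argument essentially line for line: the paper also starts from a minimizing sequence $\{\mathbf{u}_n\}\subset\mathcal{N}_I$ satisfying $J_I'(\mathbf{u}_n)-\sum_i\mu_{i,n}G_i'(\mathbf{u}_n)=o(1)$ (which is exactly the output of Ekeland on the constrained set, though the paper leaves this invocation implicit), then uses Lemma \ref{energy uniformly estimate} to get $J_I(\mathbf{u}_n)\le 2\overline C$, Lemma \ref{diagonally dominant} together with the Gershgorin circle theorem to obtain a uniform lower bound $\nu_n\ge \tfrac12 S C_1^{1/3}$ for the smallest eigenvalue of $A_I(\mathbf{u}_n)$, and then tests against $(0,\dots,u_{i,n},\dots,0)$ to reduce to $A_I(\mathbf{u}_n)\boldsymbol\mu_n=o(1)$, forcing $\boldsymbol\mu_n\to 0$ and hence $J_I'(\mathbf{u}_n)\to 0$. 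The only cosmetic difference is that you spell out the completeness of the sublevel set and the use of Ekeland, which the paper takes for granted; this is a useful clarification but not a different method.
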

				
				\begin{proof}
						By the definition of $\mathcal{C}_I$, there exists a minimizing sequence  $\lbr{\mathbf{u}_n}\subset \mathcal{N}_I$ with $\mathbf{u}_n=(u_{i,n})_{i\in I}$ satisfying
					\begin{equation} \label{minimizing sequence}
						J_I(\mathbf{u}_n) \to \mathcal{C}_I, \quad J_I^\prime(\mathbf{u}_n)-\sum_{i\in I} \mu_{i,n}G_i^\prime(\mathbf{u}_n)=o(1)
					\end{equation}
					where $$G_i(\mathbf{u})= \Ni{u_{i}}-\sum\limits_{j\in I} \beta_{ij}\abs{u_{i}u_{j}}_3^3.$$
					
					By Lemma \ref{energy uniformly estimate} we can assume that $J_I(\mathbf{u}_n) \leq 2 \overline{C}$ for $n$ large enough, then following lemma \ref{diagonally dominant} we have
					\begin{equation} \label{2.2}
						\begin{aligned}
							&4\beta_{ii} |u_{i,n}|_6^6 +\sum_{\substack{j\in I, j\neq i}} \int_{\Omega} \beta_{ij}|u_{i,n}|^3|u_{j,n}|^3 -3\sum_{\substack{j\in I , j\neq i}}\left| \int_{\Omega} \beta_{ij}|u_{i,n}|^3|u_{j,n}|^3\right| \geq \frac{1}{2}SC_1^{\frac{1}{3}} \ \text{ for  } i\in I.
						\end{aligned}
					\end{equation}
Suppose that $\nu_{n}$ is the minimum eigenvalues of $A_I(\mathbf{u}_n)$. By Gershgorin circle theorem and \eqref{2.2} we have
\begin{equation}\label{2.2-3}
\nu_{n}\geq \frac{1}{2}SC_1^{\frac{1}{3}},
\end{equation}
where $C_1$ is independent on $n$.
						
Note that $\mathbf{u}_n\in \mathcal{N}_I$, then test the second equation in \eqref{minimizing sequence} with $(0,...,u_{i,n},...,0)$, $i\in I$ and multiply by $\boldsymbol{\mu}_{n}=(\mu_{i,n})_{i\in I}$, by \eqref{2.2-3} we have
$$	
o(1)|\boldsymbol{\mu}_{n}|\geq \boldsymbol{\mu}_{n} A_I(\mathbf{u}_n) \boldsymbol{\mu}_{n}^{T}\geq \nu_{n}|\boldsymbol{\mu}_{n}|^2 \geq\frac{1}{2}SC_1^{\frac{1}{3}}  |\boldsymbol{\mu}_{n}|^2,
$$
					where $A_I(\mathbf{u}_n)$ is defined in \eqref{defi of A_q and B_q}.
					It follows that $\mu_{i,n} \to 0$ as $n \to \infty$.
					 Since for every $ \varphi \in H_0^1(\Omega)$, $G_i^{\prime}(\mathbf{u}_n) \varphi$ is uniformly bounded, we have $J_I^{\prime}(\mathbf{u}_n)\varphi =o(\left\| \varphi\right\| )$, which yields that $J_I^{\prime}(\mathbf{u}_n) \to 0 $ in $H^{-1}(\Omega)$. Therefore,  $\lbr{\mathbf{u}_n}$ is a standard Palais-Smale sequence.
				\end{proof}
			We conclude this section by introducing  the Br\'{e}zis-Lieb lemma(see \cite{Brezis Lieb lemma}) for two components, and its proof is referred to \cite[p.447]{Zou 2015}.
		
				\begin{lemma} \label{brezis-lieb lemma}
					Assume that $u_n\rightharpoonup u$, $v_n\rightharpoonup v$ in $H_0^1(\Omega)$ as $n \to \infty$ and $1<p <+\infty$. Then, up to subsequence, there holds
					\[\lim_{n \to \infty} \int_{\Omega} \sbr{|u_n|^p|v_n|^p-|u_n-u|^p|v_n-v|^p-|u|^p|v|^p}=0.\]
				\end{lemma}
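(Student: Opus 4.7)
The plan is to reduce the stated bilinear Brezis--Lieb identity to the classical single-component version applied twice in a telescoping way. First, I would extract a subsequence and set up notation. Since $\Omega\subset\R^3$ is bounded smooth, the Rellich--Kondrachov theorem gives compact embeddings $H^1_0(\Omega)\hookrightarrow L^q(\Omega)$ for every $q\in[1,6)$ together with a continuous embedding into $L^6(\Omega)$. Passing to a subsequence, I may assume $u_n\to u$ and $v_n\to v$ almost everywhere on $\Omega$ and strongly in every $L^q(\Omega)$ with $q<6$, while $\{u_n\},\{v_n\}$ remain uniformly bounded in $L^6(\Omega)$. Writing $w_n:=u_n-u$ and $z_n:=v_n-v$, both sequences tend to $0$ a.e.\ and weakly in $H^1_0(\Omega)$.

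Next, I would employ the algebraic decomposition
\begin{equation*}
|u_n|^p|v_n|^p-|w_n|^p|z_n|^p-|u|^p|v|^p=\mathrm{I}_n+\mathrm{II}_n+\mathrm{III}_n+\mathrm{IV}_n,
\end{equation*}
where $\mathrm{I}_n=(|u_n|^p-|w_n|^p-|u|^p)|v_n|^p$, $\mathrm{II}_n=|u|^p(|v_n|^p-|v|^p)$, $\mathrm{III}_n=|w_n|^p(|v_n|^p-|z_n|^p-|v|^p)$ and $\mathrm{IV}_n=|w_n|^p|v|^p$, and treat each summand separately after integration. For $\mathrm{I}_n$ and $\mathrm{III}_n$ the key input is the sharp form of the single-component lemma, namely $|u_n|^p-|w_n|^p-|u|^p\to 0$ in $L^{6/p}(\Omega)$ (and analogously in the $v$-slot); this follows from the pointwise bound $\bigl||a+b|^p-|a|^p-|b|^p\bigr|\leq C\bigl(|a|^{p-1}|b|+|a||b|^{p-1}\bigr)$ combined with a truncation argument, splitting $\Omega$ into $\{|u|\leq M\}$ (handled by strong subcritical convergence $w_n\to 0$ in $L^q$, $q<6$) and $\{|u|>M\}$ (handled by absolute continuity of the integral of $|u|^6$). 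Pairing via Hölder's inequality with the uniformly $L^{6/p}$-bounded factor $|v_n|^p$ (respectively $|w_n|^p$) then gives $\int\mathrm{I}_n,\int\mathrm{III}_n\to 0$. For $\mathrm{II}_n$ and $\mathrm{IV}_n$, the a.e.\ convergence together with the uniform $L^{6/p}$-boundedness of $|v_n|^p$ and $|w_n|^p$ yields the weak convergences $|v_n|^p\rightharpoonup|v|^p$ and $|w_n|^p\rightharpoonup 0$ in $L^{6/p}(\Omega)$; testing against the fixed functions $|u|^p,|v|^p\in L^{6/p}(\Omega)$ then gives $\int\mathrm{II}_n,\int\mathrm{IV}_n\to 0$.

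The main obstacle is the critical Lebesgue bookkeeping: for the relevant exponent $p=3$ the natural Hölder pairing is $L^2\times L^2\to L^1$, leaving no margin for strong compactness in a higher Lebesgue space, so one must produce the sharper $L^2$-convergence of the Brezis--Lieb difference rather than the weaker $L^1$ convergence which suffices in the classical statement. The truncation argument outlined above supplies exactly this refinement, and once it is in hand the four pieces combine to give the claimed identity upon summation.
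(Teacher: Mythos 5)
The paper itself supplies no proof of this lemma: it simply points the reader to \cite[p.~447]{Zou 2015}, so there is no internal argument to compare against. Your proposal is a correct, self-contained argument. The four-term telescoping decomposition is algebraically exact ($\mathrm{I}_n+\mathrm{II}_n+\mathrm{III}_n+\mathrm{IV}_n$ collapses to $|u_n|^p|v_n|^p-|w_n|^p|z_n|^p-|u|^p|v|^p$ after the obvious cancellations), and the two mechanisms you isolate are exactly the right ones: strong $L^{6/p}$ convergence of $|u_n|^p-|w_n|^p-|u|^p$ (respectively of the $v$-difference) paired by H\"older against the uniformly $L^{(6/p)'}$-bounded weights $|v_n|^p$, $|w_n|^p$ handles $\mathrm{I}_n$ and $\mathrm{III}_n$; and weak $L^{6/p}$ convergence $|v_n|^p\rightharpoonup|v|^p$, $|w_n|^p\rightharpoonup 0$ tested against the fixed $L^{(6/p)'}$ functions $|u|^p$, $|v|^p$ handles $\mathrm{II}_n$ and $\mathrm{IV}_n$. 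You also correctly identified the genuine technical point: at $p=3$ the pairing is $L^2\times L^2\to L^1$, so the one-variable Br\'ezis--Lieb statement must be upgraded from mere $L^1$ convergence to $L^2$ convergence of the difference, and the truncation sketch (pointwise bound plus splitting over $\{|u|\le M\}$ and $\{|u|>M\}$, using subcritical compactness on the first set and absolute continuity of $\int|u|^6$ on the second, with $\|w_n\|_{L^6}$ bounded) does deliver that refinement.

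One remark on scope: as printed the lemma asserts the claim for every $1<p<+\infty$, but in dimension $3$ the embedding $H_0^1(\Omega)\hookrightarrow L^q(\Omega)$ stops at $q=6$, so $\int_\Omega|u_n|^p|v_n|^p$ is not even guaranteed to be finite unless $p\le 3$. Your H\"older bookkeeping (and the statement itself, read charitably) implicitly requires $p\le 3$; the paper only uses $p=3$. This is a defect in the stated hypothesis rather than in your argument, but it is worth flagging.
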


				\vskip0.23in
\subsection{Energy estimates} \label{Sec 2.2}
In this subsection, we present two crucial energy estimates, which are important to prove Theorem \ref{existence-1}. The first one is the following proposition, which plays a key role in showing that the limit of Palais-Smale sequence is not zero.	Define
\begin{equation} \label{ defi of K_2}
K_2=\cfrac{\min\limits_{1\leq i \leq d} \left\{\sqrt{\beta_{ii}m_i}\right\}}{2\sum\limits_{i=1}^d \sqrt{\frac{m_i}{\beta_{ii}}}}.
\end{equation}
Then we have

\begin{proposition} \label{Energy-comparing 1}
Assume that there holds   $$\beta_{ii} > 0 \quad \forall i=1,2,...,d, \quad 0<\beta_{ij} < K_2 \quad  \forall i,j = 1,2,...,d, i \neq j, $$ then  we have
\begin{equation} \label{energy of c_d}
\mathcal{C}_I \leq \sum_{i\in I} m_i \quad \forall I \subseteq \{1,\ldots, d\}.
\end{equation}
\end{proposition}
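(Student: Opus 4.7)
The plan is to use the single-equation least energy positive solutions $\omega_i$ from \eqref{B-N} as building blocks and construct a test configuration $(t_i\omega_i)_{i\in I}$ living on $\mathcal{N}_I$ with all $t_i\in(0,1]$, so that
\[
\mathcal{C}_I \le J_I\bigl((t_i\omega_i)_{i\in I}\bigr)=\tfrac{1}{3}\sum_{i\in I} t_i^{2}\|\omega_i\|_i^{2}\le \tfrac{1}{3}\sum_{i\in I}\|\omega_i\|_i^{2}=\sum_{i\in I} m_i,
\]
where the last equality uses \eqref{Energy-BN}. The whole game is therefore to find these scalars $t_i$.

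Since $\omega_i$ solves \eqref{B-N}, $\|\omega_i\|_i^{2}=\beta_{ii}|\omega_i|_6^{6}$. Imposing $(t_i\omega_i)_{i\in I}\in\mathcal{N}_I$ and dividing by $t_i^{2}\|\omega_i\|_i^{2}$ shows that the required $t_i$'s are the solutions of the nonlinear system
\[
1 \;=\; t_i^{4}+t_i\sum_{\substack{j\in I\\ j\ne i}}\beta_{ij}\,c_{ij}\,t_j^{3},\qquad c_{ij}:=\frac{\int_\Omega |\omega_i|^{3}|\omega_j|^{3}}{\|\omega_i\|_i^{2}},\quad i\in I.
\]
Rewriting this as a fixed-point problem, I define $T:[0,1]^{|I|}\to\R^{|I|}$ by
\[
T_i(\mathbf{t}):=\Bigl(1-t_i\sum_{j\ne i}\beta_{ij}c_{ij}t_j^{3}\Bigr)^{1/4},
\]
so that $\mathbf{t}^{*}=T(\mathbf{t}^{*})$ gives exactly the required scaling, with automatically $t_i^{*}\in(0,1]$.

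The main obstacle — and the reason why $K_2$ is given the precise form \eqref{ defi of K_2} — is showing that $T$ actually sends $[0,1]^{|I|}$ into itself (so Brouwer's theorem applies and the radicand is positive). This requires a uniform bound $\sum_{j\ne i}\beta_{ij}c_{ij}\le \tfrac12$. By Hölder's inequality and $|\omega_i|_6^{6}=3m_i/\beta_{ii}$,
\[
c_{ij}\le \frac{|\omega_i|_6^{3}|\omega_j|_6^{3}}{\|\omega_i\|_i^{2}}=\frac{\sqrt{3m_i/\beta_{ii}}\sqrt{3m_j/\beta_{jj}}}{3m_i}=\frac{1}{\sqrt{m_i\beta_{ii}}}\sqrt{\frac{m_j}{\beta_{jj}}}.
\]
Hence for $0<\beta_{ij}<K_2$,
\[
\sum_{j\ne i}\beta_{ij}c_{ij}\le \frac{K_2}{\sqrt{m_i\beta_{ii}}}\sum_{j\in I}\sqrt{\frac{m_j}{\beta_{jj}}}\le \frac{K_2}{\min_k\sqrt{\beta_{kk}m_k}}\sum_{k=1}^{d}\sqrt{\frac{m_k}{\beta_{kk}}}=\frac12,
\]
directly from \eqref{ defi of K_2}. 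Consequently $T$ is a well-defined continuous self-map of $[0,1]^{|I|}$; Brouwer's fixed point theorem yields $\mathbf{t}^{*}\in[(1/2)^{1/4},1]^{|I|}$, hence $t_i^{*}\in(0,1]$ for every $i\in I$.

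Finally, $(t_i^{*}\omega_i)_{i\in I}\in\mathcal{N}_I$ (all components nonzero, all Nehari identities satisfied by construction), and since each $t_i^{*}\le 1$,
\[
\mathcal{C}_I \;\le\; J_I\bigl((t_i^{*}\omega_i)_{i\in I}\bigr)\;=\;\frac{1}{3}\sum_{i\in I}(t_i^{*})^{2}\|\omega_i\|_i^{2}\;\le\;\frac{1}{3}\sum_{i\in I}\|\omega_i\|_i^{2}\;=\;\sum_{i\in I} m_i,
\]
which is the desired estimate \eqref{energy of c_d}. The only place where the hypothesis on $\beta_{ij}$ enters is the Hölder estimate above, and the particular form of $K_2$ is tuned exactly so that the radicand under $T_i$ stays bounded below by $1/2$.
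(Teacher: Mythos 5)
Your proof is correct, and it reaches the conclusion by a genuinely different mechanism than the paper. Both arguments begin from the same idea -- test $\mathcal{C}_I$ against a configuration of the form $(t_i\omega_i)_{i\in I}\in\mathcal{N}_I$ and use the smallness of $\beta_{ij}$ to make the construction work -- but the tools and the endgame differ. The paper does not attempt to control the size of the scaling factors: it shows the matrix $\bigl(\int_\Omega\beta_{ij}|\omega_i|^3|\omega_j|^3\bigr)$ is strictly diagonally dominant (hence positive definite), deduces coercivity of $F(\mathbf{t})=J\bigl((t_i\omega_i)_{i\in I}\bigr)$, obtains an interior global maximizer $(a_i)$ which lies on $\mathcal{N}_I$, and then closes by \emph{dropping the positive cross terms of $J$} and invoking the one-variable inequality $\tfrac{3}{2}a^2-\tfrac{1}{2}a^6\le 1$, valid for every $a>0$. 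You instead rewrite the Nehari system as a fixed-point equation for a map $T$ on $[0,1]^{|I|}$, use the smallness hypothesis to show $T$ is a self-map with radicand bounded below by $\tfrac12$, apply Brouwer to get $\mathbf{t}^*\in[(1/2)^{1/4},1]^{|I|}$, and then the energy bound follows immediately from $t_i^*\le 1$ via the Nehari identity $J_I=\tfrac13\sum\|\cdot\|_i^2$. The trade-off: your route produces the stronger information $t_i^*\in(0,1]$ at the cost of a fixed-point argument, while the paper's route produces only some scaling in $(\R^+)^{|I|}$ but never needs to know its size, because the algebraic bound $\tfrac{3}{2}a^2-\tfrac{1}{2}a^6\le 1$ is unconditional. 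Both use the hypothesis $\beta_{ij}<K_2$ exactly once, in the same Cauchy--Schwarz estimate on $\int|\omega_i|^3|\omega_j|^3$, just packaged into diagonal dominance in the paper and into the self-mapping bound $\sum_{j\ne i}\beta_{ij}c_{ij}\le\tfrac12$ in your argument.
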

				
\begin{proof}
Without loss of generality, we only prove that
\begin{equation*}
\mathcal{C} \leq \sum_{i=1}^d m_i.
\end{equation*}
 We will prove this statement in three steps.
To begin with, we recall that  $\omega_i$ is a least energy positive solution of the Br\'ezis-Nirenberg problem with energy $m_i=\frac{1}{3}\|\omega_i\|_i^2=\frac{1}{3}\beta_{ii} |\omega_i|_6^6$ (see \eqref{Energy-BN}).
				
\vskip0.2in			
					
\noindent{\bf Step1:} We claim that the matrix $\left(  \int_{\Omega} \beta_{ij}|\omega_i|^3|\omega_j|^3 \right) _{d\times d} $ is positive definite.
					
					 For every $ 1 \leq i \leq d$,
					\begin{equation}
						\begin{aligned}
							\int_{\Omega} \beta_{ii}|\omega_i|^6 - \sum_{\substack{j=1 \\ j\neq i}}^d \left| \int_{\Omega} \beta_{ij}|\omega_i|^3|\omega_j|^3 \right|  &\geq 3m_i-K_2  \sum_{\substack{j=1 \\ j\neq i}}^d \sbr{\int_{\Omega} |\omega_i|^6 }^{\frac{1}{2}}\sbr{\int_{\Omega} |\omega_j|^6 }^{\frac{1}{2}}\\ &\geq 3m_i-K_2 \sqrt{\frac{3m_i}{\beta_{ii}}}\sum_{j=1 }^d \sqrt{\frac{3m_j}{\beta_{jj}}}\geq  \frac{3}{2}m_i>0.
						\end{aligned}
					\end{equation}
This implies that the matrix  $\left(  \int_{\Omega} \beta_{ij}|\omega_i|^3|\omega_j|^3 \right) _{d\times d} $ is strictly diagonally dominant. Since the diagonal elements are positive, then this matrix is positive definite.

\vskip0.2in

\noindent{\bf Step2:} We claim  that there exists $(a_1,...,a_d)\in (\R^+)^d$ such that $(a_1\omega_1,...,a_d\omega_d)\in \mathcal{N}$.

					We define the polynomial function $F:(\R^+)^d \to \R$
					\begin{equation}
						F(t_1,...,t_d)=J(t_1\omega_1,...,t_d\omega_d)=\frac{1}{2}\sum_{i=1}^{d} t_i^2\left\|\omega_i \right\|_i^2-\frac{1}{6}\sum_{i,j=1}^{d} t_i^3t_j^3\int_{\Omega}\beta_{ij} \abs{\omega_i}^3\abs{\omega_j}^3 ,
					\end{equation}
					where $(\R^+)^d=\lbr{x=(x_1,...,x_d):x_i>0 \text{ for } i=1,\cdots,d}$. By using the conclusion of the Step1, there exists a constant $C$ such that
					\begin{equation}
					 	F(t_1,...,t_d) \leq \frac{1}{2}\sum_{i=1}^{d} t_i^2\left\|\omega_i \right\|_i^2-\frac{C}{6}\sum_{i=1}^{d} t_i^6=\frac{3}{2}\sum_{i=1}^{d} m_it_i^2-\frac{C}{6}\sum_{i=1}^{d} t_i^6 \to -\infty \ \  \text{ as } \abs{\mathbf{t}} \to +\infty.
					\end{equation}
					Thus, the polynomial $F(t_1,...,t_d)$ has a global maximum in $\overline{(\R^+)^d}$.
					
					Assume the global maximum points $\mathbf{t}=(a_1,...,a_d)$ belongs to  $\partial \overline{(\R^+)^d}$. Without loss of generality, we assume that $a_1=0$ and $a_i>0$, $\forall i=2,...,d$, then
					$$F(0,a_2,...,a_d)=\frac{1}{2}\sum_{i=2}^{d} a_i^2\left\|\omega_i \right\|_i^2-\frac{1}{6}\sum_{i,j=2}^{d}a_i^3a_j^3\int_{\Omega}\beta_{ij} \abs{\omega_i}^3\abs{\omega_j}^3 .$$
					For  $s>0$ small enough, we have
$$
F(s,a_2,...,a_d)-F(0,a_2,...,a_d)=\frac{1}{2}s^2\left\|\omega_1 \right\|_1^2-\frac{1}{6}s^6\int_{\Omega}\beta_{11}|\omega_1|^6-\frac{1}{3}s^3\sum_{j=2}^{d} a_j^3\int_{\Omega}\beta_{1j} \abs{\omega_1}^3\abs{\omega_j}^3>0,
$$
					which contradicts to the fact that $(0,a_2,...,a_d)$ is a global maximum of $\overline{(\R^+)^d}$. Thus,  the global maximum point of $ 	F(t_1,...,t_d) $ can not belong to  $\partial \overline{(\R^+)^d}$, which implies that the global maximum point of $F(t_1,...,t_d)$ is a interior point  in $ (\R^+)^d $. Moreover, the global maximum point $ (a_1,...,a_d) \in (\R^+)^d  $ is a critical point, which means that
					\begin{equation}
					\frac{\partial F}{\partial t_i}(a_1,...,a_d)=0, \text{ for every } i=1,2,...,d.
					\end{equation}
					
Therefore,
				\begin{equation*}
				(a_1\omega_1,...,a_d\omega_d)\in \mathcal{N}.
\end{equation*}

\vskip0.2in
			
\noindent{\bf Step3:} We claim that $\mathcal{C}\leq \sum\limits_{i=1}^d m_i$.
			By the definition of $\omega_i$ and $\beta_{ij}>$ for any $i\neq j$ we see that
			\begin{equation}
				\begin{aligned}
					\mathcal{C} & \leq J(a_1\omega_1,...,a_d\omega_d)=\frac{1}{2}\sum_{i=1}^{d} a_i^2\left\|\omega_i \right\|_i^2-\frac{1}{6}\sum_{i,j=1}^{d} a_i^3a_j^3\int_{\Omega}\beta_{ij} \abs{\omega_i}^3\abs{\omega_j}^3 \\ &\leq \frac{1}{2}\sum_{i=1}^{d} a_i^2\left\|\omega_i \right\|_i^2-\frac{1}{6}\sum_{i=1}^{d} a_i^6\int_{\Omega}\beta_{ii} \abs{\omega_i}^6\\ &=\sum_{i=1}^{d}\left( \frac{3}{2}a_i^2-\frac{1}{2}a_i^6\right) m_i \leq \sum\limits_{i=1}^d m_i.
				\end{aligned}
			\end{equation}
		This completes the proof.
			\end{proof}

\medbreak
				
The following proposition will play a critical role in proving that $\mathcal{C}$ is achieved by a solution with $d$ nontrivial components. Define
				
\begin{equation} \label{defi of K_3}
					K_3= \min \lbr{K_1,\  \cfrac{S^3}{4(6 \overline{C})^2}, \  \cfrac{S^\frac{5}{2}C_1^{\frac{1}{3}}}{4\sbr{6\overline{C}}^\frac{3}{2}\sum\limits_{i=1}^d\sqrt{\frac{3m_i}{\beta_{ii}}}}\ ,\  \cfrac{\frac{\sqrt{3}}{2}\min\limits_{1\leq i \leq d}\sqrt{\beta_{ii}m_i}}{\sum\limits_{i=1}^d\sqrt{\frac{3m_i}{\beta_{ii}}}+\sbr{\frac{6\overline{C}}{S}}^{\frac{3}{2}}}},
				\end{equation}
				where  $K_1$ is defined in \eqref{K_1}. Then we have the following energy estimate.
				\begin{proposition}\label{energy comparing c_q}
					Assume that there holds $$\beta_{ii} > 0 \quad \forall i=1,2,...,d, \quad 0<\beta_{ij} < K_3 \quad  \forall i,j = 1,2,...,d, i \neq j.$$
				
Given $I \subseteq \lbr{1,2,...,d}$, suppose that $ \mathcal{C}_Q \text{ is achieved by } \mathbf{u}_Q$ for every $Q\subsetneq I$, then
					\begin{equation}
						\mathcal{C}_I \leq \min \lbr{\mathcal{C}_Q+\sum_{\substack{i\in I\backslash Q}} m_i:Q\subsetneq I }.
					\end{equation}
				\end{proposition}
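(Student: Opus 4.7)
The strategy is to build a Nehari-type test family from the sub-system minimizer $\mathbf{u}_Q$ and the scalar Brézis--Nirenberg solutions $\omega_i$, project it onto $\mathcal{N}_I$ by scaling, and then upper-bound the resulting energy. Concretely, for a fixed $Q\subsetneq I$ I would set $\phi_i=(\mathbf{u}_Q)_i$ for $i\in Q$ and $\phi_i=\omega_i$ for $i\in I\setminus Q$, and consider the polynomial $F(\mathbf{t})=J_I\bigl((t_i\phi_i)_{i\in I}\bigr)$ on $(\mathbb{R}^+)^{|I|}$. The goal becomes to show $\mathcal{C}_I\le\max F\le\mathcal{C}_Q+\sum_{i\in I\setminus Q}m_i$.

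Mimicking Steps 1--2 of Proposition \ref{Energy-comparing 1}, I would first verify that the coupling matrix $\bigl(\int_\Omega\beta_{ij}|\phi_i|^3|\phi_j|^3\bigr)_{i,j\in I}$ is strictly diagonally dominant, hence positive definite, so that $F(\mathbf{t})\to-\infty$ as $|\mathbf{t}|\to\infty$. The smallness of the $\beta_{ij}$ encoded in the last two pieces of the definition of $K_3$ is exactly tailored to this purpose: it balances $|\omega_i|_6^3=\sqrt{3m_i/\beta_{ii}}$ against the uniform bound $|(\mathbf{u}_Q)_i|_6^3\le(6\overline{C}/S)^{3/2}$ furnished by Lemma \ref{L^6norm-estimate}. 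A standard boundary analysis then places the maximum of $F$ at an interior critical point $\mathbf{t}^*\in(\mathbb{R}^+)^{|I|}$; the Euler equations at $\mathbf{t}^*$ are precisely the Nehari identities $\|\phi_i\|_i^2=t_i^*\sum_j(t_j^*)^3\int\beta_{ij}|\phi_i|^3|\phi_j|^3$, so $(t_i^*\phi_i)\in\mathcal{N}_I$ and $\mathcal{C}_I\le F(\mathbf{t}^*)=\max F$.

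For the upper bound on $\max F$, I would apply the dropping trick from Step 3 of Proposition \ref{Energy-comparing 1}: discard all non-negative contributions $\int\beta_{ij}|\phi_i|^3|\phi_j|^3$ with $i\in Q$, $j\in I\setminus Q$ as well as the off-diagonal ones inside $I\setminus Q$. This produces the separable estimate
$$
F(\mathbf{t})\le h\bigl((t_i)_{i\in Q}\bigr)+\sum_{i\in I\setminus Q}\Bigl(\tfrac{3}{2}m_i t_i^2-\tfrac{1}{2}m_i t_i^6\Bigr),\qquad h(\mathbf{s}):=J_Q\bigl((s_i(\mathbf{u}_Q)_i)_{i\in Q}\bigr).
$$
Each single-variable summand attains its maximum $m_i$ at $t_i=1$, exactly as in Proposition \ref{Energy-comparing 1}, so everything reduces to the pointwise sub-system claim $h(\mathbf{s})\le\mathcal{C}_Q$ for every $\mathbf{s}\in(\mathbb{R}^+)^{|Q|}$.

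This last bound is the main obstacle I expect to meet. Writing $c_{ij}=\int\beta_{ij}|(\mathbf{u}_Q)_i|^3|(\mathbf{u}_Q)_j|^3$ and $a_i=\|(\mathbf{u}_Q)_i\|_i^2=\sum_{j\in Q}c_{ij}$ (Nehari for $\mathbf{u}_Q$), I would symmetrize to
$$
h(\mathbf{s})-\mathcal{C}_Q=\sum_{i,j\in Q}c_{ij}\Bigl[\tfrac{s_i^2+s_j^2}{4}-\tfrac{s_i^3 s_j^3}{6}-\tfrac{1}{3}\Bigr]
$$
and insert the algebraic identity
$$
\tfrac{s^2+r^2}{4}-\tfrac{s^3 r^3}{6}-\tfrac{1}{3}=-\tfrac{1}{12}\bigl[(s^2-1)^2(s^2+2)+(r^2-1)^2(r^2+2)\bigr]+\tfrac{(s^3-r^3)^2}{12}
$$
to obtain
$$
h(\mathbf{s})-\mathcal{C}_Q=-\tfrac{1}{6}\sum_{i\in Q}a_i(s_i^2-1)^2(s_i^2+2)+\tfrac{1}{12}\sum_{i,j\in Q}c_{ij}(s_i^3-s_j^3)^2.
$$
The second (nonnegative) sum sees only off-diagonal entries $c_{ij}\le\beta_{ij}C_2\le K_3 C_2$, whereas the first (nonpositive) sum retains the full diagonal mass $a_i\ge c_{ii}\ge\beta_{ii}C_1$. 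The strict diagonal dominance of $A_Q(\mathbf{u}_Q)$ supplied by Lemma \ref{diagonally dominant}, combined with the quantitative smallness $\beta_{ij}<K_3$, is then designed to make the diagonal term dominate the cross term pointwise, yielding $h\le\mathcal{C}_Q$. Once this is in hand, combining the three ingredients gives $\max F\le\mathcal{C}_Q+\sum_{i\in I\setminus Q}m_i$ for every $Q\subsetneq I$, and taking the minimum over $Q$ finishes the proof of Proposition \ref{energy comparing c_q}.
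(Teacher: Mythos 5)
Your high-level skeleton — build the test family $(\phi_i)_{i\in I}$ from the sub-system minimizer and the scalar Br\'ezis--Nirenberg profiles, verify strict diagonal dominance of the coupling matrix to get an interior maximizer that lands in $\mathcal{N}_I$, then drop non-negative cross terms to separate the energy — is exactly the paper's route (Lemma \ref{esist of t_1,t_d} followed by Step 3 of the proof). Where you genuinely diverge is the sub-step $h(\mathbf{s})\le\mathcal{C}_Q$, which the paper isolates as Lemma \ref{subsystem-maximum} and proves by showing, after the substitution $t_i\mapsto t_i^3$, that the Hessian of $\tilde f_q$ is everywhere negative definite (a diagonal part plus the positive-definite matrix $B_q(\mathbf{u}_q)$), hence there is a unique critical point, which must be $(1,\ldots,1)$. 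Your argument instead uses the Nehari identity $a_i=\sum_jc_{ij}$ to symmetrize $h(\mathbf{s})-\mathcal{C}_Q$ and the (correct, I checked it) algebraic identity
$$\tfrac{s^2+r^2}{4}-\tfrac{s^3r^3}{6}-\tfrac{1}{3}=-\tfrac{1}{12}\bigl[(s^2-1)^2(s^2+2)+(r^2-1)^2(r^2+2)\bigr]+\tfrac{(s^3-r^3)^2}{12},$$
reducing the claim to a pointwise comparison of a nonnegative cross sum against a diagonal sum. This is a genuinely different and more elementary mechanism, and it can be closed: using $(s_i^3-s_j^3)^2\le 2(s_i^3-1)^2+2(s_j^3-1)^2$ and the elementary bound $(s^3-1)^2\le(s^2-1)^2(s^2+2)$ (equivalent to $(s^2+s+1)^2\le(s+1)^2(s^2+2)$, i.e.\ $2s+1\ge 0$), the cross term is $\le\tfrac{1}{3}\sum_i(s_i^2-1)^2(s_i^2+2)\sum_{j\ne i}c_{ij}$, and comparison with $\tfrac{1}{6}\sum_ia_i(s_i^2-1)^2(s_i^2+2)$ holds as soon as $\sum_{j\ne i}c_{ij}\le c_{ii}$. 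Two remarks: you leave precisely this last comparison as an assertion rather than a calculation, and the dominance you actually need is that of the matrix $B_Q(\mathbf{u}_Q)=(c_{ij})$ — which the paper proves inside Lemma \ref{subsystem-maximum} using the $\frac{S^3}{4(6\overline C)^2}$ piece of $K_3$ — not the strictly weaker $A_Q$-dominance of Lemma \ref{diagonally dominant} that you cite; under $\beta_{ij}<K_3$ one in fact gets $\sum_{j\ne i}c_{ij}\le\tfrac14 a_i\le\tfrac13 c_{ii}$, comfortably enough. With that citation fixed and the two elementary inequalities inserted, your argument is a valid alternative to the Hessian/uniqueness lemma; it trades the paper's structural concavity statement for an explicit decomposition, at the cost of a slightly delicate pointwise estimate.
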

Next, we present the proof of this proposition. Without loss of generality, we fix $1 \leq  q \leq  d-1 $ and prove that
			\begin{equation} \label{es-1}
					\mathcal{C} \leq \mathcal{C}_{1,\ldots,q}+\sum_{i=q+1}^d m_i,
			\end{equation}
		where we use the notation $J_{1\cdots,q}$, $ \mathcal{N}_{1,\ldots,q} $, $\mathcal{C}_{1,\ldots,q}$ instead of $J_{\lbr{1\cdots,q}}$, $ \mathcal{N}_{\lbr{1,\ldots,q}} $,  $\mathcal{C}_{\lbr{1,\cdots,q}}$ for simplicity, and the other inequalities can be proved in the same way.	Before proving \eqref{es-1}, let us firstly prove the following Lemma \ref{subsystem-maximum} and Lemma \ref{esist of t_1,t_d}.
							
				\begin{lemma} \label{subsystem-maximum}
					Assume that there holds $$\beta_{ii} > 0 \quad \forall i=1,2,...,d, \quad 0<\beta_{ij} < K_3 \quad  \forall i,j = 1,2,...,d, i \neq j.$$
					Given $1\leq q \leq d-1$, if $\mathcal{C}_{1,\ldots,q}$ is achieved by $\mathbf{u}_q=(u_1,...,u_q) \in \mathcal{N}_{1,\ldots,q}$, then $$\max_{t_1,...,t_q>0}f_q(t_1,...,t_q)=f_q(1,...,1)=\mathcal{C}_{1,\ldots,q}.$$
				\end{lemma}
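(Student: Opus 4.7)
The plan is to analyze the fibering function $f_q(\mathbf{t}) := J_{1,\ldots,q}(t_1 u_1, \ldots, t_q u_q)$, which expands as $f_q(\mathbf{t}) = \tfrac12\sum_i A_i t_i^2 - \tfrac16 \sum_{i,j} B_{ij}t_i^3 t_j^3$ with the shorthand $A_i := \|u_i\|_i^2$ and $B_{ij} := \int_\Omega \beta_{ij}|u_i|^3|u_j|^3$. The hypothesis $\mathbf{u}_q = (u_1,\ldots,u_q) \in \mathcal{N}_{1,\ldots,q}$ reads $A_k = \sum_j B_{kj}$ for every $k$, and a direct differentiation then yields $\nabla f_q(1,\ldots,1)=0$ together with $f_q(1,\ldots,1) = J_{1,\ldots,q}(\mathbf{u}_q) = \mathcal{C}_{1,\ldots,q}$; the actual content of the lemma is that $(1,\ldots,1)$ is the \emph{global} maximum of $f_q$ on $(\R^+)^q$.

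My main device is the change of variables $s_i := t_i^3$, a smooth diffeomorphism $(\R^+)^q \to (\R^+)^q$ preserving extrema. In these new coordinates $f_q$ becomes $g(\mathbf{s}) = \tfrac12\sum_i A_i s_i^{2/3} - \tfrac16 \sum_{i,j} B_{ij} s_i s_j$, whose Hessian splits as a sum of a strictly negative-definite diagonal matrix (coming from the concave powers $s_i^{2/3}$, since $A_i>0$) and $-\tfrac13(B_{ij})$. The crux of the plan is to show that the symmetric matrix $(B_{ij})_{1\le i,j\le q}$ is positive definite: I would verify the strict diagonal dominance $B_{ii}>\sum_{j\ne i}B_{ij}$ in the spirit of Step~1 in the proof of Proposition \ref{Energy-comparing 1}, combining the a priori $L^6$-bounds $C_1\le |u_i|_6^6\le C_2$ from Lemma \ref{L^6norm-estimate} with the smallness $\beta_{ij}<K_3$. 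Once $(B_{ij})$ is positive definite, the Hessian of $g$ is negative definite everywhere in $(\R^+)^q$, so $g$ is strictly concave on this convex open set.

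To finish, the positive definiteness of $(B_{ij})$ also yields the coercivity $g(\mathbf{s})\to -\infty$ as $|\mathbf{s}|\to\infty$, while $g$ stays bounded as $\mathbf{s}$ approaches $\partial(\R^+)^q$; hence $g$ attains a global maximum on $(\R^+)^q$. Strict concavity forces this maximizer to coincide with the \emph{unique} interior critical point of $g$, and since $\mathbf{s}=(1,\ldots,1)$ is already known to be critical, it must be the global maximizer. Pulling the equality back through $s_i=t_i^3$ gives exactly $\max f_q = f_q(1,\ldots,1) = \mathcal{C}_{1,\ldots,q}$. The main technical obstacle I anticipate is the diagonal-dominance estimate for $(B_{ij})$ under $\beta_{ij}<K_3$; every other step is structural.
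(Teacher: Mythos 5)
Your proposal is essentially the same as the paper's: both use the substitution $s_i=t_i^3$, split the Hessian into a strictly negative diagonal part (from $s_i^{2/3}$) plus $-\tfrac13 B_q$, and establish negative definiteness by proving $B_q=(B_{ij})$ is strictly diagonally dominant via the $L^6$ bounds and the smallness $\beta_{ij}<K_3$, then invoke uniqueness of the critical point to pin down $(1,\ldots,1)$ as the global maximizer. The only place where your write-up is loose is the sentence ``$g$ stays bounded as $\mathbf{s}$ approaches $\partial(\mathbb{R}^+)^q$; hence $g$ attains a global maximum on $(\mathbb{R}^+)^q$'' -- boundedness near the boundary plus coercivity gives a maximizer on the \emph{closure}, not automatically in the open set (compare $g(s)=-s$ on $(0,\infty)$). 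The paper closes this gap by explicitly showing, as in Step~2 of Proposition~\ref{Energy-comparing 1}, that the maximum cannot lie on $\partial\overline{(\mathbb{R}^+)^q}$ since $\tfrac12 t_i^2\|u_i\|_i^2$ dominates for small $t_i$. Your argument can be repaired without that step: since $g$ is concave on the convex set $\overline{(\mathbb{R}^+)^q}$ (the terms $s_i^{2/3}$ are concave on $[0,\infty)$ and $B_q\succ 0$), the interior critical point $(1,\ldots,1)$ is automatically the global maximizer over the entire closure; but as written the implication is in the wrong order.
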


				\begin{proof}

Notice that  $\mathcal{C}_{1,\ldots,q}$ is achieved by $\mathbf{u}_q=(u_1,...,u_q) \in \mathcal{N}_{1,\ldots,q}$, then by Lemma \ref{energy uniformly estimate} we have $J_{1,\cdots,q}(\mathbf{u_q}) =\mathcal{C}_{1,\ldots,q} < 2\overline{C}$. Consider the polynomial function $f_q:(\R^+)^q \to \R   $
				\begin{equation} \label{defi of f}
					f_q(t_1,...,t_q)=J_{1,\cdots,q}(t_1u_1,...,t_qu_q):=\frac{1}{2}\sum_{i=1}^{q} t_i^2\left\|u_i \right\|_i^2-\frac{1}{6}\sum_{i,j=1}^{q} t_i^3t_j^3\int_{\Omega}\beta_{ij} \abs{u_i}^3\abs{u_j}^3.
				\end{equation}
Define the matrix $B_q(\mathbf{u}_q)=(b_{ij}(\mathbf{u}_q))$ by
\begin{equation}\label{Bqmatrix}
b_{ij}(\mathbf{u}_q) =\int_{\Omega} \beta_{ij}|u_i|^3|u_j|^3 ,\quad  i,j=1,2,...,q.
\end{equation}
We claim that the matrix $B_q(\mathbf{u}_q)$ is positive definite. We will prove that $B_q(\mathbf{u})$ is strictly diagonally dominant, that is \begin{equation} \label{Bq dominant}
		\int_{\Omega}\beta_{ii} |u_i|^6  -\sum_{\substack{j=1 \\ j\neq i}}^q\left|\int_{\Omega}  \beta_{ij}|u_i|^3|u_j|^3\right|  > 0 .
	\end{equation}
	Note that $\mathbf{u}_q\in \mathcal{N}_{1,\ldots,q}$, then the inequality \eqref{Bq dominant} is true if we show $$\Ni{u_i}-2\sum_{\substack{j=1 \\ j\neq i}}^q \int_{\Omega} \beta_{ij}|u_i|^3|u_j|^3 >0.$$
By the definition of $K_3$ we have
	\begin{align*}
		2\sum_{\substack{j=1 \\ j\neq i}}^q\int_{\Omega} \beta_{ij}|u_i|^3|u_j|^3& \leq \frac{2K_3}{S^3}\sum_{\substack{j=1 \\ j\neq i}}^q \left\| u_i\right\|_i^3 \left\| u_j\right\|_j^3\\
		&\leq \frac{2K_3}{S^3} (6 \overline{C})^2 \Ni{u_i}\leq \frac{1}{2} \Ni{u_i}.
	\end{align*}
	Thus,$$\Ni{u_i}-2\sum_{\substack{j=1 \\ j\neq i}}^q \int_{\Omega} \beta_{ij}|u_i|^3|u_j|^3 \geq   \frac{1}{2} \Ni{u_i}  \geq \frac{1}{2}SC_1^{\frac{1}{3}}.$$	
Therefore, $B_q(\mathbf{u})$ is strictly diagonally dominant, and so $B_q(\mathbf{u})$ is positive definite. It follows that there exists a constant $C>0$ such that
			\begin{equation}
				\begin{aligned}
					f_q(t_1,...,t_q)&=\frac{1}{2}\sum_{i=1}^{q} t_i^2\left\|u_i \right\|_i^2-\frac{1}{6}\sum_{i,j=1}^{q} t_i^3t_j^3\int_{\Omega}\beta_{ij} \abs{u_i}^3\abs{u_j}^3  \\&\leq \frac{1}{2}\sum_{i=1}^{q} t_i^2\left\|u_i \right\|_i^2-\frac{C}{6}\sum_{i=1}^{q} t_i^6 \to -\infty ,\quad \text{ as } |\mathbf{t}|\to +\infty,
				\end{aligned}
			\end{equation}
			which implies that  $f_q(t_1,...,t_q)$ has a global maximum in $\overline{(\R^+)^q}$. Here, $\mathbf{t}=(t_1,...,t_q).$ Similar to the proof of Step2 in proposition \ref{Energy-comparing 1}, we can get that the global maximum point of $ f_q(x_1,...,x_q) $ can not belong to  $\partial \overline{(\R^+)^q}$, which implies that the global maximum point of $f_q(x_1,...,x_q)$ is a interior point  in $ (\R^+)^q $. Therefore, the global maximum point of $f_q$ is a critical point. Next, we will show that $f_q$ has a unique critical point.
			
		For convenience of calculations, we consider
		\begin{equation}
			\tilde{f_q}(t_1,....,t_q)=\frac{1}{2}\sum_{i=1}^{q} t_i^{\frac{2}{3}}\left\|u_i \right\|_i^2-\frac{1}{6}\sum_{i,j=1}^{q} t_it_j\int_{\Omega}\beta_{ij} \abs{u_i}^3\abs{u_j}^3 .
		\end{equation}
	By a direct calculation,
	\begin{equation}
		\begin{aligned}
			&\frac{\partial \tilde{f_q}}{\partial t_i}(t_1,...,t_q)=\frac{1}{3}t_i^{-\frac{1}{3}}\Ni{u_i}-\frac{1}{3}\sum_{j=1}^q t_j\int_{\Omega}\beta_{ij} \abs{u_i}^3\abs{u_j}^3 ,\quad 1\leq i \leq q,\\
			&\frac{\partial^2 \tilde{f_q}}{\partial t_i^2}(t_1,...,t_q)=-\frac{1}{9}t_i^{-\frac{4}{3}}\Ni{u_i}-\frac{1}{3}\int_{\Omega}\beta_{ii} \abs{u_i}^6 , \quad 1\leq i \leq q,\\
			&\frac{\partial^2 \tilde{f_q}}{\partial t_i \partial t_j}(t_1,...,t_q)=-\frac{1}{3}\int_{\Omega}\beta_{ij} \abs{u_i}^3\abs{u_j}^3 , \quad 1\leq i,j \leq q, \ i \neq j,
		\end{aligned}
	\end{equation}
Thus the Hessian matrix of $\tilde{f_q}$ is
\begin{equation}
	\begin{aligned}
		H(\tilde{f_q})&=-\frac{1}{9}\sbr{\begin{matrix}
				t_1^{-\frac{4}{3}}\Ni{u_1}&&\\
				&\ddots &\\
				& &t_q^{-\frac{4}{3}}\Ni{u_q}
		\end{matrix}}-\frac{1}{3}\sbr{\begin{matrix}
				b_{11}(\mathbf{u_q}) &\cdots &b_{1q}(\mathbf{u_q})\\
				\vdots&\ddots &\vdots\\
				b_{q1}(\mathbf{u_q}) &\cdots&b_{qq}(\mathbf{u_q})
		\end{matrix}}\\
	&=: -\frac{1}{9}B(\mathbf{t})-\frac{1}{3}B_q(\mathbf{u_q}),
	\end{aligned}	
\end{equation}
where $ \mathbf{t}=(t_1,...,t_q) \in (\R^+)^q $ and  $b_{ij}(\mathbf{u})$ is defined in \eqref{Bqmatrix}.
We already know the matrix $ B_q(\mathbf{u_q}) $ is positive definite  and it is easy to see the matrix $B(\mathbf{t})$  is also positive definite, 
thus the Hessian matrix of  $\tilde{f_q}$ is negative definite,
which implies that $\tilde{f_q}$ has a unique critical point. Therefore, the critical point must be the global maximum point. Notice that $ \tilde{f_q}(t_1^3,....,t_q^3) = f_q(t_1,...,t_q)$,  thus $f_q$ has a unique critical point and the critical point must be the global maximum point.
Since $\mathbf{u_q}\in \mathcal{N}_{1,\ldots q}$, then by a direct calculation we have
$$\cfrac{\partial f_q}{\partial t_i}(\mathbf{1})=\Ni{u_i}-\sum_{j=1 }^q \int_{\Omega} \beta_{ij} |u_i|^3|u_j|^3=0 \quad \text{ for every } 1\leq   i \leq q,
$$
which implies that $\mathbf{1}=(1,...,1)\in (\R^+)^q$ is a critical point. As a consequence, $\mathbf{1}=(1,...,1)$  is  a maximum point of $f_q$. In other words,
\begin{equation}
	 \max_{t_1,...,t_q>0}f_q(t_1,...,t_q)=f(1,...,1)=\mathcal{C}_{1,\ldots q}.
\end{equation}
This completes the proof.
		\end{proof}

	\begin{lemma} \label{esist of t_1,t_d}
					Assume that there holds $$\beta_{ii} > 0 \quad \forall i=1,2,...,d, \quad 0<\beta_{ij} < K_3 \quad  \forall i,j = 1,2,...,d, i \neq j.$$
Given $1\leq q  \leq d-1 $, if $\mathcal{C}_{1,\ldots q}$ is attained by $\mathbf{u}_q=(u_1,...,u_q) \in \mathcal{N}_{1,\ldots q}$, then there exists $\widetilde{t}_i>0, i=1,\ldots, d$, such that $$(\widetilde{t}_1u_1,...,\widetilde{t}_qu_q,\widetilde{t}_{q+1}\omega_{q+1},...,\widetilde{t}_d\omega_d) \in \mathcal{N},$$
where $\omega_i$ is a least energy positive solution of \eqref{B-N}.
				\end{lemma}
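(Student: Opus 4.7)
The plan is to run the same variational scaling argument as in Step~2 of Proposition~\ref{Energy-comparing 1} (and as in Lemma~\ref{subsystem-maximum}), but applied to the mixed $d$-tuple of test functions. Concretely, set $v_i=u_i$ for $1\le i\le q$ and $v_i=\omega_i$ for $q+1\le i\le d$, and consider the polynomial
\begin{equation*}
F(t_1,\ldots,t_d):=J(t_1v_1,\ldots,t_dv_d)=\frac{1}{2}\sum_{i=1}^{d}t_i^{2}\|v_i\|_i^{2}-\frac{1}{6}\sum_{i,j=1}^{d}t_i^{3}t_j^{3}\int_{\Omega}\beta_{ij}|v_i|^3|v_j|^3
\end{equation*}
on $\overline{(\R^+)^d}$. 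The sought $\widetilde{t}_i$ will be extracted as the coordinates of an interior critical point of $F$, since $\partial F/\partial t_i(\widetilde{\mathbf t})=0$ together with $\widetilde t_i>0$ is precisely the $i$-th Nehari identity for $(\widetilde t_1 v_1,\ldots,\widetilde t_d v_d)$.

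First I would verify that the symmetric matrix $M:=\bigl(\int_{\Omega}\beta_{ij}|v_i|^3|v_j|^3\bigr)_{i,j=1,\ldots,d}$ is positive definite, by showing it is strictly diagonally dominant. The diagonal entries are bounded below: for $i\le q$, by using $\mathbf u_q\in\mathcal{N}_{1,\ldots q}$ together with Lemma~\ref{L^6norm-estimate} to get $\int_{\Omega}\beta_{ii}|u_i|^6\ge\beta_{ii}C_1$; for $i>q$, by the identity $\int_{\Omega}\beta_{ii}|\omega_i|^6=3m_i$ coming from $\omega_i$ being on the Nehari set of the single equation, see \eqref{Energy-BN}. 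The off-diagonal entries are bounded above via H\"older as $\beta_{ij}|v_i|_6^3|v_j|_6^3$, with $|u_i|_6^3\le(6\overline{C}/S)^{3/2}$ (from $J_{1,\ldots,q}(\mathbf u_q)\le\overline{C}$ and $S|u_i|_6^2\le\|u_i\|_i^2\le 6\overline{C}$) and $|\omega_i|_6^3=\sqrt{3m_i/\beta_{ii}}$. The last summand in the minimum defining $K_3$ in \eqref{defi of K_3} is tailored so that, for $\beta_{ij}<K_3$, these bounds yield strict diagonal dominance of $M$ in every row, both of ``$u$-type'' and of ``$\omega$-type''.

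With $M$ positive definite, the sextic part of $F$ dominates at infinity on $\overline{(\R^+)^d}$, so $F(\mathbf t)\to-\infty$ as $|\mathbf t|\to\infty$, and consequently $F$ attains a global maximum on $\overline{(\R^+)^d}$. I would then rule out the boundary exactly as in Step~2 of Proposition~\ref{Energy-comparing 1}: if $t_{i_0}=0$ at the maximum, then fixing the other coordinates and increasing $t_{i_0}$ from $0$ to a small $s>0$ produces an increment whose leading order is $\tfrac{s^2}{2}\|v_{i_0}\|_{i_0}^2>0$, contradicting maximality. Hence the maximum is interior, yielding $\widetilde{\mathbf t}\in(\R^+)^d$, and the Fermat conditions $\partial F/\partial t_i(\widetilde{\mathbf t})=0$ with $\widetilde t_i>0$ translate into the $d$ Nehari identities, whence $(\widetilde t_1 u_1,\ldots,\widetilde t_q u_q,\widetilde t_{q+1}\omega_{q+1},\ldots,\widetilde t_d\omega_d)\in\mathcal{N}$.

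The main obstacle is precisely the strict diagonal dominance of $M$: unlike Step~1 of Proposition~\ref{Energy-comparing 1}, where every test function is an $\omega_i$ with the clean identity $\int_{\Omega}\beta_{ii}|\omega_i|^6=3m_i$, and unlike Lemma~\ref{subsystem-maximum}, where every test function comes from the same subsystem minimizer, here the rows of $M$ mix the two sources and must be handled by two different estimates simultaneously; the fourth entry of the minimum defining $K_3$ in \eqref{defi of K_3} was engineered for exactly this combined estimate. Once strict diagonal dominance is in place, the remainder is the standard quadratic-dominates-higher-order perturbation used already twice above.
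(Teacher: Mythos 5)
Your overall strategy coincides with the paper's: set $v_i=u_i$ for $i\le q$, $v_i=\omega_i$ for $i>q$, show that the mixed matrix $M(\mathbf v)=\bigl(\int_\Omega\beta_{ij}|v_i|^3|v_j|^3\bigr)_{i,j=1}^d$ is strictly diagonally dominant (hence positive definite), deduce coercivity of $\varPhi(\mathbf t)=J(t_1v_1,\ldots,t_dv_d)$, rule out boundary maxima, and read the $d$ Nehari identities off the interior maximizer. The $\omega$-type rows ($q+1\le i\le d$) you handle exactly as the paper does, via $\int_\Omega\beta_{ii}|\omega_i|^6=3m_i$, $|\omega_j|_6^3=\sqrt{3m_j/\beta_{jj}}$, $|u_j|_6^3\le(6\overline{C}/S)^{3/2}$, and the fourth summand in the minimum defining $K_3$ in \eqref{defi of K_3}.

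However, for the $u$-type rows ($1\le i\le q$) there is a genuine gap. You bound the diagonal crudely from below by $\int_\Omega\beta_{ii}|u_i|^6\ge\beta_{ii}C_1$ and the full off-diagonal row sum from above by $K_3|u_i|_6^3\sum_{j\ne i}|v_j|_6^3$, and attribute the resulting dominance again to the fourth entry of $K_3$. This does not close: with the paper's $K_3$, that off-diagonal bound is of size comparable to $\overline{C}$, whereas $\beta_{ii}C_1$ is only the small uniform lower bound coming out of Lemma~\ref{L^6norm-estimate} and need not be anywhere near it. The paper avoids this by exploiting the Nehari-identity cancellation inside the $u$-$u$ block: Lemma~\ref{subsystem-maximum} (which rests on the \emph{second} entry of $K_3$, namely $K_3\le S^3/(4(6\overline{C})^2)$) gives
\[
\int_\Omega\beta_{ii}|u_i|^6-\sum_{\substack{j\le q\\ j\ne i}}\int_\Omega\beta_{ij}|u_i|^3|u_j|^3\ \ge\ \tfrac12 S C_1^{1/3},
\]
and the \emph{third} entry of $K_3$ is what is engineered so that the remaining $u$-$\omega$ coupling $\sum_{j>q}\int_\Omega\beta_{ij}|u_i|^3|\omega_j|^3$ is $\le\tfrac14 S C_1^{1/3}$. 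The fourth entry plays no role in the $u$-type rows. So you should import the estimate from Lemma~\ref{subsystem-maximum} for the $u$-$u$ block and only bound the $u$-$\omega$ tail by H\"older, rather than taking a raw lower bound on the diagonal; as written, your inequality for those rows is not justified by the given $K_3$.
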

				
				\begin{proof}
					Let $\mathbf{v}=(v_1,...,v_d)=(u_1,...,u_q,\omega_{q+1},...,\omega_d) $ and
					\begin{equation}
						\varPhi(t_1,...,t_d) = J(t_1v_1,...,t_dv_d)=\frac{1}{2}\sum_{i=1}^{q} t_i^2\left\|v_i \right\|_i^2-\frac{1}{6} L_t M(\mathbf{v}) L_t^T
					\end{equation}
					where $L_t=(t_1^3,...,t_d^3)$ is a vector in $\R^d$ and $M(\mathbf{v})=\sbr{M_{ij}(\mathbf{v})}_{d\times d}$  is a symmetric matrix with
					\begin{equation}
						\begin{aligned}
							&M_{ij}(\mathbf{v})=\int_{\Omega}\beta_{ij}|u_i|^3|u_j|^3, \quad \text{ for every } 1\leq i,j \leq q\\
							&M_{ij}(\mathbf{v})=\int_{\Omega}\beta_{ij}|u_i|^3|\omega_j|^3, \quad \text{ for every } 1 \leq\  i\  \leq q,\  q+1 \leq j \leq d\\
							&M_{ij}(\mathbf{v})=\int_{\Omega}\beta_{ij}|\omega_i|^3|\omega_j|^3,\quad \text{ for every } q+1\leq i,j \leq d
						\end{aligned}
					\end{equation}
We will show that the matrix $M(\mathbf{v})$ is strictly diagonally dominant. We separate the proof into two cases.
					
For the case $1\leq i \leq q :$ we want to show that $$\int_{\Omega} \beta_{ii} |u_i|^6 -\sum_{\substack{j=1 \\ j\neq i}}^q \int_{\Omega} \beta_{ij}|u_i|^3|u_j|^3-\sum_{j=q+1}^d\int_{\Omega}\beta_{ij}|u_i|^3|\omega_j|^3 >0.$$
					In fact, by Lemma \ref{subsystem-maximum} we know
					$$\int_{\Omega} \beta_{ii} |u_i|^6 -\sum_{\substack{j=1 \\ j\neq i}}^q \int_{\Omega} \beta_{ij}|u_i|^3|u_j|^3 \geq \frac{1}{2}SC_1^{\frac{1}{3}}.$$
					Moreover, under the assumptions of $\beta_{ij}$ and the definition of $m_i$, we have
					\begin{equation}
						\begin{aligned}
							\sum_{j=q+1}^d\int_{\Omega}\beta_{ij}|u_i|^3|\omega_j|^3 &\leq K_3 \sum_{j=q+1}^d\sbr{\int_{\Omega}|u_i|^6}^{\frac{1}{2}}\sbr{\int_{\Omega} |\omega_j|^6}^{\frac{1}{2}}\\&\leq  \frac{K_3}{S^{\frac{3}{2}}}\sum_{j=q+1}^d\left\|u_i \right\|_i^3 \sqrt{\frac{3m_j}{\beta_{jj}}}\\&\leq \frac{K_3}{S^{\frac{3}{2}}} (6\overline{C})^{\frac{3}{2}} \sbr{\sum\limits_{j=1}^d\sqrt{\frac{3m_j}{\beta_{jj}}}} \leq \frac{1}{4}SC_1^{\frac{1}{3}},
						\end{aligned}
					\end{equation}
					which implies that
					\begin{equation} \label{15}
						\int_{\Omega} \beta_{ii} |u_i|^6 -\sum_{\substack{j=1 \\ j\neq i}}^q \int_{\Omega} \beta_{ij}|u_i|^3|u_j|^3-\sum_{j=q+1}^d\int_{\Omega}\beta_{ij}|u_i|^3|\omega_j|^3 \geq  \frac{1}{4}SC_1^{\frac{1}{3}}> 0.
					\end{equation}
				
					For the case $q+1 \leq i \leq d :$ We want to show $$ \int_{\Omega} \beta_{ii} |\omega_i|^6 -\sum_{j=1 }^q \int_{\Omega} \beta_{ij}|\omega_i|^3|u_j|^3-\sum_{\substack{j=q+1 \\ j\neq i}}^d\int_{\Omega}\beta_{ij}|\omega_i|^3|\omega_j|^3 >0. $$
					By a direct calculation, we have
					\begin{equation} \label{16}
						\begin{aligned}
							&\int_{\Omega} \beta_{ii} |\omega_i|^6 -\sum_{j=1 }^q \int_{\Omega} \beta_{ij}|\omega_i|^3|u_j|^3-\sum_{\substack{j=q+1 \\ j\neq i}}^d\int_{\Omega}\beta_{ij}|\omega_i|^3|\omega_j|^3\\
							& \geq 3m_i-K_3\sum_{j=1 }^q\sbr{\int_{\Omega} |\omega_i|^6}^{\frac{1}{2}}\sbr{\int_{\Omega} |u_j|^6}^{\frac{1}{2}}-K_3\sum_{\substack{j=q+1 \\ j\neq i}}^d \sbr{\int_{\Omega} |\omega_i|^6}^{\frac{1}{2}} \sbr{\int_{\Omega} |\omega_j|^6}^{\frac{1}{2}}\\
							&\geq 3m_i- K_3 \sqrt{\frac{3m_i}{\beta_{ii}}} \sum_{j=1 }^q\cfrac{\left\|u_j \right\|_j^3}{S^{\frac{3}{2}}}-K_3\sum_{\substack{j=q+1 \\ j\neq i}}^d \sqrt{\frac{3m_i}{\beta_{ii}}}\sqrt{\frac{3m_j}{\beta_{jj}}}\\
							&\geq 3m_i -K_3\sqrt{\frac{3m_i}{\beta_{ii}}}\mbr{\sbr{\cfrac{6\overline{C}}{S}}^{\frac{3}{2}}+\sum_{j=1}^d\sqrt{\frac{3m_j}{\beta_{jj}}}}\geq \frac{3}{2} m_i.
						\end{aligned}
					\end{equation}
We deduce from \eqref{15}) and \eqref{16} that $M(\mathbf{v})$ is strictly diagonally dominant, then $M(\mathbf{v})$ is positive definite. Then there exists $C>0$ such that
\begin{equation}
\begin{aligned}
\varPhi(t_1,...,t_d)&=\frac{1}{2}\sum_{i=1}^{q} t_i^2\left\|v_i \right\|_i^2-\frac{1}{6} L_t M(\mathbf{v}) L_t^T \\& \leq \frac{1}{2}\sum_{i=1}^{q} \sbr{t_i^2\left\|v_i \right\|_i^2 -\frac{C}{6} t_i^6} \to -\infty \quad \text{ as } |\mathbf{t}| \to + \infty.
\end{aligned}
\end{equation}
Therefore, $\varPhi(t_1,...,t_d)$ has a global maximum $(\widetilde{t}_1,...,\widetilde{t}_d)$ in $\overline{(\R^+)^d}$. By a simliar argument as used in Lemma $\ref{subsystem-maximum}$ Step 1, the global maximum point $(\widetilde{t}_1,...,\widetilde{t}_d)$  can  not belong to $\partial \overline{(\R^+)^d}$, and it must be a critical point.
Therefore, $(\widetilde{t}_1u_1,...,\widetilde{t}_qu_q,\widetilde{t}_{q+1}\omega_{q+1},...,\widetilde{t}_d\omega_d) \in \mathcal{N}.$
				\end{proof}
				
\begin{proof}[\bf Proof of proposition \ref{energy comparing c_q}:]
				Without loss of generality, we prove that
				\begin{equation}
					\mathcal{C} \leq \mathcal{C}_{1,\ldots,q}+\sum_{i=q+1}^d m_i.
				\end{equation}
Assume that $\mathcal{C}_{1,\ldots,q}$ is achieved by $\mathbf{u}_q=(u_1,\cdots,u_q)$.		
By Lemma \ref{esist of t_1,t_d} there exists  $(\widetilde{t}_1,...,\widetilde{t}_d)$ such that $(\widetilde{t}_1u_1,...,\widetilde{t}_qu_q,\widetilde{t}_{q+1}\omega_{q+1},...,\widetilde{t}_d\omega_d) \in \mathcal{N}.$
Note that $\beta_{ij}>0$ for any $i\neq j$,	then by a direct calculation we have
				\begin{equation}\label{40}
					\begin{aligned}
J(\widetilde{t}_1u_1,...,\widetilde{t}_qu_q,\widetilde{t}_{q+1}\omega_{q+1},...,\widetilde{t}_d\omega_d)
&\leq \frac{1}{2}\sum_{i=1}^q \widetilde{t_i}^2\Ni{u_i}-\frac{1}{6} \sum_{i,j=1}^q\widetilde{t_i}^3\widetilde{t_j}^3\int_{\Omega}\beta_{ij}|u_i|^3|u_j|^3\\
& \quad +\frac{1}{2}\sum_{i=q+1}^d \widetilde{t_i}^2\Ni{\omega_i}-\frac{1}{6} \sum_{i=q+1}^d\widetilde{t_i}^6\int_{\Omega}\beta_{ii}|\omega_i|^6 \\
&=:f(\widetilde{t}_1,...,\widetilde{t}_q)+ g(\widetilde{t}_{q+1},...,\widetilde{t}_d),
\end{aligned}	
\end{equation}
where $f(t_1,...,t_q)$ is defined in \eqref{defi of f}and
\begin{equation}
g(t_{q+1},...,t_d):=\frac{1}{2}\sum\limits_{i=q+1}^d t_i^2\Ni{\omega_i}-\frac{1}{6} \sum\limits_{i=q+1}^dt_i^6\int_{\Omega}\beta_{ij}|\omega_i|^6.
\end{equation}
Notice that$\Ni{\omega_i}=\int_{\Omega}\beta_{ij}|\omega_i|^6=3m_i$, it is easy to show that
\begin{equation} \label{41}
g(\widetilde{t}_{q+1},...,\widetilde{t}_d)\leq \max_{t_{q+1},...,t_d>0}g(t_{q+1},...,t_d) =\sum\limits_{i=q+1}^d m_i.
\end{equation}
By Lemma \ref{subsystem-maximum} we get that
\begin{equation}\label{42}				f(\widetilde{t}_1,...,\widetilde{t}_q)\leq\max_{t_1,...,t_q>0}f(t_1,...,t_q)=f(1,...,1)=\mathcal{C}_{1,\ldots,q}.
\end{equation}
				We deduce from \eqref{40}), \eqref{41} and \eqref{42} that
				\begin{equation}\begin{aligned}
						\mathcal{C} \leq J(\widetilde{t}_1u_1,...,\widetilde{t}_qu_q,\widetilde{t}_{q+1}\omega_{q+1},...,\widetilde{t}_d\omega_d) \leq f(\widetilde{t}_1,...,\widetilde{t}_q)+ g(\widetilde{t}_{q+1},...,\widetilde{t}_d)\leq \mathcal{C}_{1,\ldots,q}+\sum_{i=q+1}^dm_i.
					\end{aligned}	
				\end{equation}
			This completes the proof of Proposition \ref{energy comparing c_q}.
				 \end{proof}

\subsection{Proof of Theorem \ref{existence-1}}\label{Sec 2.3}
In this subsection, we present the proof of Theorem \ref{existence-1}. Recall that $ m_i<\frac{1}{3}\beta_{ii}^{-\frac{1}{2}} \widetilde{S}^{\frac{3}{2}}$ (see \eqref{Energy-BN}) for every $1 \leq i \leq d.$ Set				
\begin{equation} \label{3.6}
	\delta=\frac{1}{2}\min_{1\leq i \leq d}\lbr{\beta_{ii}^{-1}\widetilde{S}^3- (3m_i)^2}>0,
\end{equation}
then we have
\begin{equation} \label{3.7}
	(3m_i)^2 <\beta_{ii}^{-1}\widetilde{S}^{\frac{3}{2}}-\delta, \quad 1\leq i \leq d.
\end{equation}			
Denote
\begin{equation}\label{defi of K}
K_4=\min_{1\leq i\leq d}\left\{\frac{\beta_{ii}S^3}{(6\overline{C})^2\widetilde{S}^3}\delta\right\}  \ \text{ and }\ K=\min\lbr{ K_1, K_2, K_3, K_4},
\end{equation}
where $K_1$ is defined in \eqref{K_1}, $K_2$ is defined in \eqref{ defi of K_2}, $K_3$ is defined in \eqref{defi of K_3},  $\delta$ is fixed in \eqref{3.6}.
From now on, we assume that $\beta_{ij}$ satisfies $0<\beta_{ij} < K$ for any $i \neq j.$				
	
\begin{proof}[\bf Conclusion of the proof of Theorem \ref{existence-1}]
We will proceed by mathematical induction on the number of the equations in the subsystem. Set $|I|=M$, that is $M$ the number of the equations in the subsystem, and $M=1,\ldots, d$.

When $M=1$, system \eqref{mainequ} reduces to the following problem
\begin{equation*}
-\Delta u+\lambda_i=\beta_{ii}|u|^4u, \quad u\in H_0^1(\Omega),
\end{equation*}
and by \cite{Brezis-Nirenberg1983} we see that Theorem \ref{existence-1} is true.

We suppose by induction hypothesis that Theorem \ref{existence-1} holds true for every level $\mathcal{C}_I$ with $|I|\leq M$  for some $1\leq M\leq d-1$. We need prove Theorem \ref{existence-1} for $\mathcal{C}_I$ with $|I|=M+1$. Without loss of generality, we will present the proof for $I=\{1,\ldots,M+1\}$.  By induction hypothesis we know that Proposition \ref{energy comparing c_q} is true for $\mathcal{C}_I$.
By Lemma \ref{exist of ps sequence}, there exists a sequence $\lbr{\mathbf{u}_n}\subset \mathcal{N}_I$ satisfying $$\lim_{n \to \infty}J_I(\mathbf{u}_n)=\mathcal{C}_I ,\quad \lim_{n \to \infty}J^{\prime}_I(\mathbf{u}_n)=0,$$ then $\lbr{u_{i,n}}$ is uniformly bounded in $H_0^1{\sbr{\Omega}}$, $i=1,2,...,M+1$. Passing to subsequence, we may assume that
				\begin{equation}\label{convergence1}
					\begin{aligned}
						u_{i,n}\rightharpoonup u_i  \text{ weakly in }H_0^1{\sbr{\Omega}},\quad
						u_{i,n} \to u_i   \text{ strongly in }  L^2(\Omega).
					\end{aligned}
				\end{equation}
It is standard to see that $J^{\prime}_I(\mathbf{u})=0$ and
				\begin{equation} \label{23}
					\left\|u_i \right\|_i^2=\sum_{j=1}^{M+1} \int_{\Omega}\beta_{ij} \abs{u_i}^3\abs{u_j}^3  \quad \text{for every } i=1,2,..,M+1.
				\end{equation}	
				Denote $\sigma_{i,n}=u_{i,n}-u_i$, $i=1,2,...,M+1$, and so
				$$\sigma_{i,n}\rightharpoonup 0  \text{ weakly in }H_0^1{\sbr{\Omega}}.$$
				We deduce from \eqref{convergence1} that
				\begin{equation}\label{21}
					\int_{\Omega} |\nabla u_{i,n}|^2 =\int_{\Omega} |\nabla \sigma_{i,n}|^2 +\int_{\Omega} |\nabla u_{i}|^2  +o(1),
				\end{equation}
				and by lemma \ref{brezis-lieb lemma} we have
				\begin{equation}\label{22}
					\int_{\Omega} |u_{i,n}|^3|u_{j,n}|^3=\int_{\Omega}|\sigma_{i,n}|^3|\sigma_{j,n}|^3 +	\int_{\Omega} |u_{i}|^3|u_{j}|^3+o(1).
				\end{equation}
				By \eqref{21} and \eqref{22} we have
				\begin{equation}
					J_I(\mathbf{u}_n)=J_I(\mathbf{u})+\frac{1}{3}\sum_{i=1}^{M+1} \int_{\Omega} |\nabla \sigma_{i,n}|^2+o(1).
				\end{equation}
Passing to subsequence, we may assume that
				\begin{equation}
					\lim_{n \to \infty} \int_{\Omega} |\nabla \sigma_{i,n}|^2  =k_i \geq 0, \quad i=1,2,...,M+1.
				\end{equation}
				Thus,
				\begin{equation} \label{27}
					0\leq J_I(\mathbf{u}) \leq J_I(\mathbf{u}) + \frac{1}{3}\sum_{i=1}^{M+1} k_i =\lim_{n \to \infty}	J_I(\mathbf{u}_n) =\mathcal{C}_I .
				\end{equation}
				Next, we will show that all $u_i \not \equiv 0, 1\leq i\leq M+1$ by using a contradiction argument.
				
				{\bf Case 1:}  $u_i \equiv 0$ for every $  1\leq i\leq M+1 $.
				
Firstly, we claim that $k_i>0$, $i=1,2,...,M+1$. By contradiction, without loss of generality, we assume that $k_1=0$, notice that $\sigma_{1,n}=u_{1,n}$, then we know that $\sigma_{1,n} \to 0$ strongly in $H_0^1(\Omega)$  and $u_{1,n} \to 0$ strongly in $H_0^1(\Omega)$. Hence, by Sobolev inequality
				we have $$\lim_{n \to \infty}\int_{\Omega} |u_{1,n}|^6 =0.$$
				On the other hand, by Lemma \ref{L^6norm-estimate}, we see that
				$$\lim_{n \to \infty}\int_{\Omega} |u_{1,n}|^6   \geq C_1>0,$$
				which is a contradiction. Therefore, $k_i>0$, $i=1,2,...,M+1$. Notice that $J_I(\mathbf{u}_n)\leq2 \mathcal{C}_I\leq2\overline{C}$ for $n$ large enough, thus
				\begin{equation*}
					\int_{\Omega}|\nabla \sigma_{i,n}|^2 \leq \sum_{j=1}^{M+1}\int_{\Omega}|\nabla \sigma_{j,n}|^2+3J_I(\mathbf{u})+o(1)=3J_I(\mathbf{u}_n)\leq 6\overline{C}.
				\end{equation*}
Hence,
				\begin{equation} \label{3.20}
					0<k_i\leq 6\overline{C}.
				\end{equation}
 				 Since $\mathbf{u}_n \in \mathcal{N}_I$ and $\sigma_{i,n}=u_{i,n}$, then we have $\sum\limits_{i=1}^{M+1}\Ni{\sigma_{i,n}}=\sum\limits_{i=1}^{M+1}\Ni{u_{i,n}} \leq  6\overline{C}$. Therefore,
\begin{equation}
\begin{aligned}
\int_{\Omega} |\nabla \sigma_{i,n}|^2  &=\beta_{ii}\int_{\Omega} |\sigma_{i,n}|^6  +\sum_{\substack{j=1 \\ j\neq i}}^{M+1}\beta_{ij}\int_{\Omega} |\sigma_{i,n}|^3|\sigma_{j,n}|^3   \\
						&\leq \beta_{ii} \widetilde{S}^{-3} \sbr{	\int_{\Omega} |\nabla \sigma_{i,n}|^2}^3+ K\sum_{\substack{j=1 \\ j\neq i}}^{M+1} \sbr{\int_{\Omega} |\sigma_{i,n}|^6 }^{\frac{1}{2}}\sbr{\int_{\Omega} |\sigma_{j,n}|^6 }^{\frac{1}{2}}\\
						&\leq \beta_{ii} \widetilde{S}^{-3} \sbr{	\int_{\Omega} |\nabla \sigma_{i,n}|^2}^3+	KS^{-3}\left\|\sigma_{i,n} \right\|_i^3\sum_{\substack{j=1 \\ j\neq i}}^{M+1}   \left\|\sigma_{j,n} \right\|_j^3\\
						&\leq \beta_{ii} \widetilde{S}^{-3} \sbr{	\int_{\Omega} |\nabla \sigma_{i,n}|^2}^3+K S^{-3} (6\overline{C})^{\frac{3}{2}}\sbr{\int_{\Omega} |\nabla \sigma_{i,n}|^2  +o(1)}^{\frac{3}{2}}.\\
					\end{aligned}
				\end{equation}
				
				Let $n\to \infty$, we have
				\begin{equation}\label{2.27}
					k_i\leq \beta_{ii} \widetilde{S}^{-3}k_i^3+KS^{-3}(6\overline{C})^{\frac{3}{2}}k_i^{\frac{3}{2}}.				\end{equation}
			Combining this with \eqref{3.20}, we get
				\begin{equation}
					1\leq  \beta_{ii} \widetilde{S}^{-3}k_i^{2}+KS^{-3}(6\overline{C})^2.
				\end{equation}
			Then by the definition of $K$, $K_4$ and \eqref{3.7} we get
			\begin{equation}
				\begin{aligned}
					k_i^2\geq \beta_{ii}^{-1} \widetilde{S}^{3} -K \cfrac{\widetilde{S}^3 (6\overline{C})^2}{\beta_{ii}S^3}\geq \beta_{ii}^{-1} \widetilde{S}^{3} -\delta > (3m_i)^2,
				\end{aligned}
			\end{equation}
		which implies
		\begin{equation}
			k_i > 3m_i.
		\end{equation}
				By proposition \ref{Energy-comparing 1} and \eqref{27}  we have
				\begin{equation}
					\sum_{i=1}^{M+1} m_i\geq \mathcal{C}_I=\lim_{n \to \infty}J_I(\mathbf{u}_n)= J_I(\mathbf{u})+ \frac{1}{3}\sum_{i=1}^{M+1} k_i= \frac{1}{3}\sum_{i=1}^{M+1} k_i>\sum_{i=1}^{M+1} m_i,
				\end{equation}
				that is a contradiction. Therefore, Case 1 is impossible.
				
				{\bf Case 2:} Only one component of $\mathbf{u}$ is not zero.
				
				Without loss of generality, we assume that $u_1\not \equiv 0$, and $u_i\equiv 0$, $2\leq i \leq p+1$. Similarly to Case 1, we can prove that  $k_i>3 m_i >0$  for every $2\leq i \leq M+1$.
				Notice that $ (u_1,0,...,0) $ is a solution of \eqref{mainequ}, then $J(u_1,0,...,0) \geq  m_1$.
Combining this with Proposition \ref{Energy-comparing 1} and \eqref{27} we know that
				\begin{equation}
					\sum_{i=1}^{M+1} m_i\geq \mathcal{C}_I=\lim_{n \to \infty}J_I(\mathbf{u}_n)= J_I(u_1,0,...,0)+ \frac{1}{3}\sum_{i=1}^{M+1}k_i \geq m_1+ \frac{1}{3}\sum_{i=2}^{M+1}k_i >\sum_{i=1}^{M+1} m_i,
				\end{equation}
				that is a contradiction. Therefore, Case 2 is impossible.
				
				{\bf Case 3:} There are $q$ components of $\mathbf{u}$ that are not zero, $2\leq q \leq M$.
				
				Without of loss generality, we may assume that $u_1,...,u_q\not\equiv 0$, and $u_{q+1},...,u_{M+1}\equiv0$. Similarly to Case 1, we have $k_i>3m_i$, $q+1 \leq i \leq M+1$.
			Note that $ (u_1,u_2,...,u_q,0,,...,0) $ is a solution of subsystem  and $(u_1,u_2,...,u_q) \in \mathcal{N}_{1,\ldots q}$, then $J_I(\mathbf{u})\geq \mathcal{C}_{1,\ldots q}$.
				Combining this with  Proposition \ref{energy comparing c_q} and \eqref{27}, we have
				\begin{equation}\begin{aligned}
						\mathcal{C}_{1,\ldots q}+\sum\limits_{i=q+1}^{M+1} m_i  \geq \mathcal{C}_I =\lim_{n \to \infty} J_I(\mathbf{u}_n)= J_I(\mathbf{u})+\frac{1}{3}\sum_{i=1}^{M+1}k_i>\mathcal{C}_{1,\ldots q}+\sum_{i=q+1}^{M+1}m_i,
					\end{aligned}	
				\end{equation}
that is a contradiction. Therefore, Case 3 is impossible.
				
Since Case 1, Case 2 and Case 3 are impossible, then we get that all components of $\mathbf{u}=(u_1,...,u_{M+1})$ are not zero. Therefore $\mathbf{u}\in \mathcal{N}_I$. Combining this with \eqref{27} we see that
\begin{equation}
\mathcal{C}_I\leq J_I(\mathbf{u}) \leq  J_I(\mathbf{u})+\frac{1}{3} \sum_{i=1}^{M+1} k_i =\lim_{n \to \infty} J_I(\mathbf{u}_n)=\mathcal{C}_I,
\end{equation}
which yields that $J_I(\mathbf{u})=\mathcal{C}_I$. Obviously, $$\mathbf{\widehat{u}}=(|u_1|,...,|u_{M+1}|) \in \mathcal{N}_I\  \text{ and }\  J_I(\mathbf{\widehat{u}})=\mathcal{C}_I.$$
It follows from Lemma \ref{energy uniformly estimate} and \ref{diagonally dominant} that $\mathbf{\widehat{u}}$ is a nonnegative critical point of $J_I$, and $(|u_1|,...,|u_{M+1}|)$ is a nonnegative solution of system \eqref{mainequ}. By the maximum principle, we know that $ |u_i|>0$ in $\Omega$, $1\leq i \leq M+1$. Therefore, $\mathbf{\widehat{u}}$ is a least energy positive solution of subsystem \eqref{subsystem1} with $I=\lbr{1,\cdots,M+1}$. We proceed by repeating this step, then we obtain a least energy positive solution of subsystem \eqref{subsystem1} with $I=\lbr{1,\cdots,d}$.   This completes the proof.
\end{proof}

\newpage
\section{Ground state solutions for the weakly cooperative case} \label{Sec 2.4}

In this section, we show the proof of Theorem \ref{existence-4}.

\subsection{limit system } \label{Sec 3.1}

Since the problem \eqref{mainequ} has a critical nonlinearity and critical coupling terms, the existence of nontrivial ground state solutions  of \eqref{mainequ}  strongly depend on the existence of the ground state solutions of the following limit system
\begin{equation} \label{limit system}
	\begin{cases}
		-\Delta u_i =\sum\limits_{j=1}^{d} \beta_{ij}|u_j|^{3}|u_i|u_i  \quad ~\text{ in } \R^3,\\
		u_i \in \mathcal{D}^{1,2}(\R^3) ,\quad i= 1,2,...,d,
	\end{cases}
\end{equation}
where $\mathcal{D}^{1,2}(\R^3)=\lbr{u\in L^2(\R^3): |\nabla u| \in L^2(\R^3)}$ with norm $\left\|u \right\|_{\mathcal{D}^{1,2}}:=\sbr{\int_{\R^3}|\nabla u|^2 }^{\frac{1}{2}} $.
Define $ \mathbb{D}:=\sbr{ \mathcal{D}^{1,2}(\R^3)}^d$ and $C^1$ functional $E:\mathbb{D} \to \R$ as follows

\begin{equation}
	E(\mathbf{u}):=\frac{1}{2} \sum_{i=1}^d \int_{\R^3}|\nabla u_i|^2  -\frac{1}{6} \sum_{i,j=1}^d \int_{\R^3} \beta_{ij} |u_i|^3|u_j|^3 .
\end{equation}
We consider the set
\begin{equation} \label{M'}
	\mathcal{M}^\prime=\lbr{\mathbf{u}\in \mathbb{D}\setminus \lbr{\mathbf{0}}:  \sum_{i=1}^d\int_{\R^3}|\nabla u_i|^2= \sum_{i,j=1}^d \int_{\R^3} \beta_{ij} |u_i|^3|u_j|^3 }.
\end{equation}
Then any nontrivial solution of \eqref{limit system} belongs to $\mathcal{M}^\prime$. We set
\begin{equation} \label{defi of B}
	\mathcal{B}:= \inf_{\mathbf{u}\in \mathcal{M}^\prime} E(\mathbf{u})=\inf_{\mathbf{u}\in \mathcal{M}^\prime} \frac{1}{3} \sum_{i=1}^d \int_{\R^3} |\nabla u_i|^2 .
\end{equation}
For $\varepsilon>0$ and $y \in \R^3$, we consider the Aubin-Talenti bubble (\cite{Aubin=JDG=1976},\cite{Talenti=AMPA=1976})  $U_{\varepsilon,y}\in\mathcal{D}^{1,2}(\R^3) $ defined by
\begin{equation}
	U_{\varepsilon,y}(x)= \cfrac{(3\varepsilon^2)^{\frac{1}{4}}}{\sbr{\varepsilon^2+|x-y|^2}^{\frac{1}{2}}}.
\end{equation}
Then $U_{\varepsilon,y}$ solves the equation
\begin{equation}
	-\Delta u = u^5 \text{ in } \R^3,
\end{equation}
and
\begin{equation}
	\int_{\R^3}|\nabla U_{\varepsilon,y}|^2  =\int_{\R^3}| U_{\varepsilon,y}|^6 = \widetilde{S}^{\frac{3}{2}},
\end{equation}
where $\widetilde{S}$ is the Sobolev best constant of $\mathcal{D}^{1,2}(\R^3)\hookrightarrow L^6(\R^3)$. Furthermore, $\lbr{U_{\varepsilon,y}: \varepsilon>0, y \in \R^3 }$ contains all positive solutions of the equation $-\Delta u = u^5 \text{ in } \R^3$. To simplify the notation, we denote
\begin{equation} \label{defi of U{varepsilon}}
	U_{\varepsilon}(x):=U_{\varepsilon,0}(x)= \cfrac{(3\varepsilon^2)^{\frac{1}{4}}}{\sbr{\varepsilon^2+|x|^2}^{\frac{1}{2}}}.
\end{equation}

Thanks to \cite{HeYang2018}, we can get the existence and classification results for ground state solutions of system \eqref{limit system}, which is used to prove the existence of ground state solution in the next subsection. Before proceeding, we need introduce some notations.

Consider the polynomial function $P:\R^d \to \R$ defined by
\begin{equation*}
	P(\mathbf{x})=\sum_{i,j=1}^d\beta_{ij}|x_i|^3|x_j|^3
\end{equation*}
and denote by $\mathcal{X}$ the set of solutions to the maximization problem
\begin{equation}\label{P-def}
	P(\boldsymbol{\tau})=\max_{|\mathbf{X}|=1}P(\mathbf{X})=P_{\max},  \ \boldsymbol{\tau}=\sbr{\tau_1,\cdots,\tau_d}, \ \abs{\boldsymbol{\tau}}=1.
\end{equation}

\begin{lemma} \label{solution of limlit system}
	Assume that \eqref{assumption -2} holds, then the level $\mathcal{B}$ is attained by a solution of system \eqref{limit system}. Moreover, any of such minimizers has the form $\sbr{\tau_1P_{\max}^{-\frac{1}{4}}U_{\varepsilon,y},\cdots,\tau_d P_{\max}^{-\frac{1}{4}}U_{\varepsilon,y}}$, where $\boldsymbol{\tau}=\sbr{\tau_1,\cdots,\tau_d} \in \mathcal{X}$ and $y \in \R$, $\varepsilon>0$.
\end{lemma}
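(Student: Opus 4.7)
The plan is to establish matching two–sided bounds for $\mathcal{B}$ and then classify the equality cases. First I would construct candidate minimizers to get an upper bound. For any $\boldsymbol{\tau}\in\mathcal{X}$ and any $\varepsilon>0$, $y\in\R^3$, set $u_i=\tau_i P_{\max}^{-1/4}U_{\varepsilon,y}$. Using $|\boldsymbol{\tau}|=1$ and the characterization of $\boldsymbol{\tau}$ as a Lagrange critical point of $P$ on the unit sphere (which via Euler's relation gives $\tau_i\sum_j\beta_{ij}\tau_j^3=P_{\max}$ for each $i$), a direct substitution shows that $\mathbf{u}$ solves \eqref{limit system}. Since $U_{\varepsilon,y}$ satisfies $\int_{\R^3}|\nabla U_{\varepsilon,y}|^2=\int_{\R^3}|U_{\varepsilon,y}|^6=\widetilde S^{3/2}$, one computes $\sum_i\int_{\R^3}|\nabla u_i|^2=P_{\max}^{-1/2}\widetilde S^{3/2}$, and checks $\mathbf{u}\in\mathcal{M}'$ so that $E(\mathbf{u})=\tfrac13 P_{\max}^{-1/2}\widetilde S^{3/2}$. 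Hence $\mathcal{B}\leq \tfrac13 P_{\max}^{-1/2}\widetilde S^{3/2}$.

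Second, I would prove the matching lower bound. Since $P$ is $6$-homogeneous, the definition of $P_{\max}$ yields the pointwise inequality
\begin{equation}
\sum_{i,j=1}^{d}\beta_{ij}|u_i(x)|^3|u_j(x)|^3=P(|u_1(x)|,\ldots,|u_d(x)|)\leq P_{\max}\Bigl(\sum_{i=1}^{d}|u_i(x)|^2\Bigr)^{3}.
\end{equation}
Expanding the cube and applying Hölder's inequality gives $\int_{\R^3}\bigl(\sum_i u_i^2\bigr)^3\leq\bigl(\sum_i(\int_{\R^3}|u_i|^6)^{1/3}\bigr)^3$, and the Sobolev inequality on $\mathcal{D}^{1,2}(\R^3)$ bounds each factor by $\widetilde S^{-1}\int_{\R^3}|\nabla u_i|^2$. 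Setting $T:=\sum_i\int_{\R^3}|\nabla u_i|^2$ and using $\mathbf{u}\in\mathcal{M}'$, I obtain $T\leq P_{\max}\widetilde S^{-3}T^{3}$, i.e.\ $T\geq P_{\max}^{-1/2}\widetilde S^{3/2}$, hence $E(\mathbf{u})\geq\tfrac13 P_{\max}^{-1/2}\widetilde S^{3/2}$.

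Third, I would extract the classification from the equality cases. A minimizer must saturate each inequality above: equality in Sobolev forces each nontrivial component $u_i$ to be (up to sign) a positive multiple of an Aubin–Talenti bubble $U_{\varepsilon_i,y_i}$; equality in Hölder forces $|u_i|^6$ and $|u_j|^6$ to be proportional a.e.\ whenever both are nonzero, so all the $\varepsilon_i,y_i$ coincide and one may write $u_i=\sigma_i v$ for a single $v=cU_{\varepsilon,y}$ and signed scalars $\sigma_i$; and equality in the pointwise bound forces $(|\sigma_1|,\ldots,|\sigma_d|)/|\boldsymbol\sigma|\in\mathcal{X}$. Plugging this back into $\mathcal{M}'$ and matching the energy fixes $|\boldsymbol\sigma|=1$ and $c=P_{\max}^{-1/4}$. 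A final check that no sign change $\sigma_i<0$ can arise (since then one could flip the sign and stay in $\mathcal{M}'$ with the same energy, but the solution property together with the representation $\boldsymbol\tau$ being a maximizer of $P$, which is invariant under $x_i\mapsto |x_i|$, means we can take all $\tau_i\geq 0$) yields the stated form.

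The main obstacle I anticipate is the classification step, specifically turning the equality cases into a rigid description. The delicate points are: (i) ruling out that different components concentrate at different scales or centers (handled by the Hölder equality, but one must be careful with components that may vanish identically); (ii) promoting the pointwise proportionality to a single common profile rather than only proportionality on the joint support; and (iii) showing the sign vector and the unit vector of moduli can be chosen consistently so that $\boldsymbol\tau\in\mathcal{X}$ in the stated sense. The other steps are essentially sharp Sobolev and Hölder bookkeeping, which is standard.
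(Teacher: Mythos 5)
Your proposal is essentially correct, but it does not match the paper's treatment simply because the paper gives \emph{no} proof of Lemma \ref{solution of limlit system}: it cites the existence and classification of ground states of the limit system \eqref{limit system} from \cite{HeYang2018}. You have therefore supplied a self-contained argument where the paper relies on an external reference. The strategy you use (show $\mathcal{B}=\tfrac13 P_{\max}^{-1/2}\widetilde S^{3/2}$ via the chain $T\leq P_{\max}\int(\sum_i u_i^2)^3\leq P_{\max}\widetilde S^{-3}T^3$ using the $6$-homogeneous bound $P\leq P_{\max}|\cdot|^6$, Minkowski in $L^3$ for $u_i^2$, and Sobolev, then classify via the equality cases in each step) is exactly the standard route to such classifications for critical systems whose coupling is a homogeneous form, and it matches what is done in \cite{HeYang2018} and related works, so this is a valid and fairly efficient independent proof of the lemma.

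Two small technical points you should tighten. First, in the verification that the ansatz $u_i=\tau_i P_{\max}^{-1/4}U_{\varepsilon,y}$ solves \eqref{limit system}, the Lagrange--Euler identity should read $|\tau_i|\sum_j\beta_{ij}|\tau_j|^3=P_{\max}$ for those $i$ with $\tau_i\neq0$ (with absolute values, not $\tau_i\sum_j\beta_{ij}\tau_j^3$, since $\partial_i P=6\,\mathrm{sgn}(\tau_i)|\tau_i|^2\sum_j\beta_{ij}|\tau_j|^3$); the case $\tau_i=0$ is handled separately because then $u_i\equiv0$ and the $i$-th equation is trivial. Since $P$ depends only on the moduli, there is no loss of generality in taking $\tau_i\geq0$, which makes your computation correct as written. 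Second, your final ``sign'' worry is unnecessary: $\mathcal{X}$, defined by $P(\boldsymbol\tau)=P_{\max}$ and $|\boldsymbol\tau|=1$, is invariant under $\tau_i\mapsto-\tau_i$, so the lemma statement already tolerates any choice of signs and the normalization $|\boldsymbol\sigma|=1$, $c=P_{\max}^{-1/4}$ is all that is needed. Aside from these cosmetic adjustments, your equality analysis (Sobolev extremal $\Rightarrow$ bubble on each nonzero component, Minkowski equality $\Rightarrow$ a common profile, pointwise equality $\Rightarrow$ $\boldsymbol\tau\in\mathcal{X}$) is the right rigidity argument.
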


\subsection{Proof of Theorem \ref{existence-4}} \label{Sec 3.2}

In this subsection, we start to prove Theorem \ref{existence-4}. Recall the Nehari manifold
\begin{equation*}
	\mathcal{M}=\lbr{\mathbf{u}\in \mathbb{H}_d:\  \mathbf{u}\neq \mathbf{0},\  \sum_{i=1}^d\Ni{u_i}=\sum_{i,j=1}^d\int_{\Omega}\beta_{ij}|u_i|^3|u_j|^3 },
\end{equation*}
and the level of $J$
\begin{equation*}
	\mathcal{A}=\inf\lbr{J(\mathbf{u}): \mathbf{u} \in \mathcal{M}},
\end{equation*}
which are defined in the Introduction.
Define
\begin{equation}
	\widetilde{\mathcal{C}}:=\inf_{\gamma\in \Gamma}\max_{t\in[0,1]}J(\gamma(t)),
\end{equation}
where $\Gamma=\lbr{\gamma\in C([0,1],\mathbb{H}_d): \gamma(0)=\mathbf{0},J(\gamma(1))<0}$.
It is easy to see that
\begin{equation}
	\widetilde{\mathcal{C}}=\inf_{\mathbf{u}\in \mathbb{H}_d \setminus \lbr{\mathbf{0}}} \max_{t>0}J(t\mathbf{u})=\inf_{\mathbf{u} \in \mathcal{M}} J(\mathbf{u})=\mathcal{A}.
\end{equation}

Next, we present an energy estimate for the level $\mathcal{A}$, which plays a critical role in showing that the limit of Palais-Smale sequence is not zero.
\begin{lemma} \label{GSS energy estimate 1}
Assume that \eqref{assumption-1} and \eqref{assumption -2} hold. Then we have $\mathcal{A}< \mathcal{B}$, where $\mathcal{B}$ is defined in \eqref{defi of B}.
\end{lemma}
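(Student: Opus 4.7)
The plan is to compare $\mathcal{A}$ to $\mathcal{B}$ by exploiting the explicit structure of the limit ground states from Lemma \ref{solution of limlit system}. First, I would use that lemma to compute $\mathcal{B}$ explicitly. Since a minimizer has the form $(\tau_1 P_{\max}^{-1/4}U_{\varepsilon,y},\ldots,\tau_d P_{\max}^{-1/4}U_{\varepsilon,y})$ with $|\boldsymbol{\tau}|=1$ and $P(\boldsymbol{\tau})=P_{\max}$, a direct calculation gives
\begin{equation*}
\mathcal{B}=\tfrac{1}{3}\sum_{i=1}^d\tau_i^2\, P_{\max}^{-1/2}\int_{\R^3}|\nabla U_{\varepsilon,y}|^2=\tfrac{1}{3}P_{\max}^{-1/2}\widetilde{S}^{3/2}.
\end{equation*}

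Next, I would build a test vector on $\Omega$ that mirrors the shape of the limit minimizer. Fix $\boldsymbol{\tau}\in\mathcal{X}$ and consider competitors of the form $\mathbf{u}=(\tau_1 v,\ldots,\tau_d v)$ for $v\in H^1_0(\Omega)\setminus\{0\}$. For such $\mathbf{u}$, the fibering map $t\mapsto J(t\mathbf{u})$ attains its maximum at some $t_v>0$, and a straightforward computation yields
\begin{equation*}
\max_{t>0}J(t\mathbf{u})=\tfrac{1}{3}P_{\max}^{-1/2}\,\frac{\bigl(\|\nabla v\|_2^2+\bar\lambda\|v\|_2^2\bigr)^{3/2}}{\|v\|_6^3},\qquad \bar\lambda:=\sum_{i=1}^d\tau_i^2\lambda_i.
\end{equation*}
Since $\widetilde{\mathcal{C}}=\mathcal{A}$ via the mountain-pass characterization noted before the lemma, we have $\mathcal{A}\le \max_{t>0}J(t\mathbf{u})$ for every such $v$, so in view of the formula for $\mathcal{B}$ it suffices to exhibit $v\in H_0^1(\Omega)$ with
\begin{equation*}
\frac{\|\nabla v\|_2^2+\bar\lambda\|v\|_2^2}{\|v\|_6^2}<\widetilde{S}.
\end{equation*}

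To produce such a $v$, I would invoke the scalar three-dimensional Br\'ezis--Nirenberg theory already recalled in \eqref{Energy-BN}. The key observation is that $|\boldsymbol{\tau}|=1$ together with $\lambda_i<-\lambda^*(\Omega)$ for every $i$ forces
\begin{equation*}
\bar\lambda=\sum_{i=1}^d\tau_i^2\lambda_i<-\lambda^*(\Omega)\sum_{i=1}^d\tau_i^2=-\lambda^*(\Omega),
\end{equation*}
and similarly $\bar\lambda>-\lambda_1(\Omega)$. Hence $\bar\lambda\in(-\lambda_1(\Omega),-\lambda^*(\Omega))$, and the Br\'ezis--Nirenberg existence result quoted via \eqref{Energy-BN} provides a least energy positive solution $\omega$ of $-\Delta\omega+\bar\lambda\omega=\omega^5$ on $\Omega$ with $\|\omega\|^2/\|\omega\|_6^2<\widetilde{S}$, where $\|\omega\|^2=\|\nabla\omega\|_2^2+\bar\lambda\|\omega\|_2^2$. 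Taking $v=\omega$ in the estimate above and using the explicit expression for $\mathcal{B}$ completes the proof.

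The main obstacle, and the only nontrivial ingredient, is the strict inequality $\|\omega\|^2/\|\omega\|_6^2<\widetilde{S}$: this is exactly the delicate point of the three-dimensional Br\'ezis--Nirenberg problem, where (unlike $N\ge 4$) one cannot use a plain cutoff of $U_\varepsilon$ and must exploit the range $\bar\lambda<-\lambda^*(\Omega)$. Fortunately this is precisely the input already recorded in \eqref{Energy-BN}, and the weighted-average computation $\bar\lambda<-\lambda^*(\Omega)$ transfers it cleanly from the scalar setting to the system. I do not anticipate any further difficulty: the reduction via the ansatz $(\tau_1v,\ldots,\tau_dv)$ decouples the system completely because $\boldsymbol{\tau}$ is chosen to realize $P_{\max}$, and every remaining step is a one-line algebraic identity.
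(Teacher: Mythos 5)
Your proof is correct and rests on the same underlying test-function ansatz as the paper: take $\mathbf{u}=(\tau_1 v,\ldots,\tau_d v)$ with $\boldsymbol{\tau}\in\mathcal{X}$, reduce $\max_{t>0}J(t\mathbf{u})$ to the scalar quotient $\frac{1}{3}P_{\max}^{-1/2}\bigl(|\nabla v|_2^2+\bar\lambda|v|_2^2\bigr)^{3/2}/|v|_6^3$ with $\bar\lambda=\sum_i\tau_i^2\lambda_i$, and compare with $\mathcal{B}=\frac{1}{3}P_{\max}^{-1/2}\widetilde{S}^{3/2}$. The only divergence is in the final step. The paper substitutes $v=w_\varepsilon$ and re-derives the strict inequality via the explicit three-dimensional expansions \eqref{estimate1}--\eqref{estimate3}, whereas you observe that the averaged parameter $\bar\lambda$ still lies in $(-\lambda_1(\Omega),-\lambda^*(\Omega))$, so the scalar Br\'ezis--Nirenberg bound already recalled in \eqref{Energy-BN} directly supplies a $v$ with $\bigl(|\nabla v|_2^2+\bar\lambda|v|_2^2\bigr)/|v|_6^2<\widetilde{S}$. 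This weighted-eigenvalue observation is a genuine conceptual shortcut: it avoids repeating the $w_\varepsilon$ asymptotics and isolates exactly what is borrowed from the scalar theory. The paper's version is more self-contained but both arguments are equally valid.
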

\begin{proof}
Without loss of generality, we assume that $0\in \Omega$ and $B_{R_0}(0) $ is the largest ball contained in $\Omega$, then  we take $\lambda^*(\Omega)= \frac{\pi^2}{4R_0^2}$ and
		\begin{equation}
			\varphi(x)=\begin{cases}
				\cos\sbr{\frac{\pi |x|}{2R_0}},  &x \in B_{R_0}(0),\\
				0 ,& x \in \Omega \setminus B_{R_0}(0).
			\end{cases}
		\end{equation}
		 Set
		\begin{equation*}
			w_{\varepsilon}(x):=U_{\varepsilon}(x)\varphi(x).
		\end{equation*}
	and
		\begin{equation}
		\widetilde{V}_i^\varepsilon(x)=\varphi(x)V_i^\varepsilon(x)=\tau_iP_{\max}^{-\frac{1}{4}}w_{\varepsilon}(x).
	\end{equation}
		where $U_{\varepsilon}$ is defined in \eqref{defi of U{varepsilon}}. By a standard argument ( see Lemma 1.3 in \cite{Brezis-Nirenberg1983} ), we get
\begin{equation}\label{estimate1}
\int_{\Omega} |\nabla w_{\varepsilon}|^2 = \widetilde{S}^{\frac{3}{2}}+\frac{\sqrt{3}}{2R_0}\pi^3\varepsilon + O(\varepsilon^2),
\end{equation}
		\begin{equation}\label{estimate2}
			\int_{\Omega} |w_{\varepsilon}|^6  = \widetilde{S}^{\frac{3}{2}}+ O(\varepsilon^2),
		\end{equation}
		\begin{equation}\label{estimate3}
			\int_{\Omega} |w_{\varepsilon}|^2  = 2\sqrt{3}\pi\varepsilon R_0+ O(\varepsilon^2).
		\end{equation}
		
By Lemma \ref{solution of limlit system} we know that $\sbr{V_1^{\varepsilon},\cdots,V_d^{\varepsilon}}=\sbr{\tau_1P_{\max}^{-\frac{1}{4}}U_{\varepsilon},\cdots,\tau_d P_{\max}^{-\frac{1}{4}}U_{\varepsilon}}$ is a ground state solution of system \eqref{limit system}, where $P_{\max}$, $\tau_i$ are defined in \eqref{P-def} and $U_{\varepsilon}$ is defined in \eqref{defi of U{varepsilon}}. Then
		\begin{equation*}
			\mathcal{B}=E\sbr{V_1^{\varepsilon},\cdots,V_d^{\varepsilon}}= \frac{1}{3}\sum_{i,j=1}^d \int_{\R^3}\beta_{ij} |V_i^\varepsilon|^3|V_j^\varepsilon|^3  =\frac{1}{3}\sbr{\sum_{i,j=1}^d \beta_{ij} |\tau_i|^3|\tau_j|^3} P_{\max}^{-\frac{3}{2}}\int_{\R^3} |U_{\varepsilon}(x)|^6 =\frac{1}{3}P_{\max}^{-\frac{1}{2}}  \widetilde{S}^{\frac{3}{2}}.
		\end{equation*} 			
		
	On the other hand,	we deduce from $\abs{\boldsymbol{\tau}}=1$ that there must exist $1\leq i_0\leq d$ such that $\tau_{i_0}^2>0$.
Based on this fact, for $\varepsilon>0 $ small, by \eqref{estimate1}, \eqref{estimate2} and \eqref{estimate3} we have
\begin{equation}
	\begin{aligned}
		&\max_{t>0} J(t\widetilde{V}_1^\varepsilon(x),\cdots,t\widetilde{V}_d^\varepsilon(x))\\
		&=\frac{1}{3} \cfrac{ \sbr{\sum\limits_{i=1}^d \int_{\Omega} |\nabla \widetilde{V}_i^\varepsilon(x)|^2+\lambda_{i} |\widetilde{V}_i^\varepsilon(x)|^2 dx}^{\frac{3}{2}}}{\sbr{\sum\limits_{i,j=1}^d \int_{\Omega} \beta_{ij} |\widetilde{V}_i^\varepsilon(x)|^3|\widetilde{V}_j^\varepsilon(x)|^3 dx }^{\frac{1}{2}}}\\
		&=\frac{1}{3} \cfrac{\mbr{\sum\limits_{i=1}^d \tau_i^2 P_{\max}^{-\frac{1}{2}}\sbr{\int_{\Omega} |\nabla w_{\varepsilon}(x)|^2 dx+\lambda_{i} \int_{\Omega} |w_{\varepsilon}(x)|^2 dx }}^{\frac{3}{2}}}{\sbr{P_{\max}^{-\frac{3}{2}}\sum\limits_{i,j=1}^d \beta_{ij}|\tau_i|^3|\tau_j|^3}^{\frac{1}{2}} \sbr{\int_{\Omega} |w_{\varepsilon}(x)|^6 dx}^{\frac{1}{2}}}\\
		&=\frac{1}{3} \cfrac{\mbr{\widetilde{S}^{\frac{3}{2}}+\sum\limits_{i=1}^d 2\sqrt{3}\pi\varepsilon R_0\tau_i^2 \sbr{ \lambda_{i}+\frac{\pi^2}{4R_0^2}}+ O(\varepsilon^2)}^{\frac{3}{2}}} {P_{\max}^{\frac{1}{2}} \sbr{\widetilde{S}^{\frac{3}{2}}+ O(\varepsilon^2)}^{\frac{1}{2}}}\\
		& < \frac{1}{3}P_{\max}^{-\frac{1}{2}}  \widetilde{S}^{\frac{3}{2}} = \mathcal{B}.
	\end{aligned}
\end{equation}
As a consequence,
\begin{equation}
	\mathcal{A}= \inf_{\mathbf{u}\in \mathbb{H}_d\setminus\lbr{\mathbf{0}}} \max_{t>0} J(t\mathbf{u}) \leq \max_{t>0} J(t\widetilde{V}_1^\varepsilon(x),\cdots,t\widetilde{V}_d^\varepsilon(x))< \mathcal{B}.
\end{equation}
This completes the proof.
	\end{proof}
			
\begin{proposition} \label{ground state achieve}
Suppose that $\beta_{ij}\geq 0$ for any $i\neq j$. Then $\mathcal{A}$ is attained on $\mathcal{M}$.
\end{proposition}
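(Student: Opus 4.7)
The plan is to produce a bounded Palais--Smale sequence for $J$ at level $\mathcal{A}$ on $\mathcal{M}$ and then recover compactness via the strict inequality $\mathcal{A}<\mathcal{B}$ from Lemma~\ref{GSS energy estimate 1}. Because $\beta_{ij}\geq 0$ for all $i,j$ and $\beta_{ii}>0$, for each $\mathbf{u}\in\mathbb{H}_d\setminus\{\mathbf{0}\}$ the map $t\mapsto J(t\mathbf{u})$ has a unique positive maximiser $t_{\mathbf{u}}>0$ with $t_{\mathbf{u}}\mathbf{u}\in\mathcal{M}$, so $\mathcal{A}$ equals the mountain-pass level $\widetilde{\mathcal{C}}=\inf_{\mathbf{u}\neq\mathbf{0}}\max_{t>0}J(t\mathbf{u})$. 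Since $\mathcal{M}$ is a $C^1$ constraint bounded away from $\mathbf{0}$ (the reverse Sobolev estimate $\sum_i\|u_i\|_i^2\leq C(\sum_i\|u_i\|_i^2)^3$ valid on $\mathcal{M}$ forces $\sum_i\|u_i\|_i^2\geq c_0>0$), Ekeland's variational principle yields $\{\mathbf{u}_n\}\subset\mathcal{M}$ with $J(\mathbf{u}_n)\to\mathcal{A}$ and $J'|_{\mathcal{M}}(\mathbf{u}_n)\to 0$. Boundedness in $\mathbb{H}_d$ is immediate from $J(\mathbf{u}_n)=\frac{1}{3}\sum_i\|u_{i,n}\|_i^2$ on $\mathcal{M}$ together with $\|\cdot\|_i\sim\|\cdot\|_{H^1_0}$ (since $\lambda_i>-\lambda_1(\Omega)$). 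Writing $J'(\mathbf{u}_n)=\mu_n G'(\mathbf{u}_n)+o(1)$ with $G(\mathbf{u})=\sum_i\|u_i\|_i^2-\sum_{i,j}\beta_{ij}\int_\Omega|u_i|^3|u_j|^3$, testing against $\mathbf{u}_n$ and noting that $\sum_{i,j}\beta_{ij}\int_\Omega|u_{i,n}|^3|u_{j,n}|^3=\sum_i\|u_{i,n}\|_i^2=3\mathcal{A}+o(1)$ is bounded away from $0$, one deduces $\mu_n\to 0$, hence $J'(\mathbf{u}_n)\to 0$ in $\mathbb{H}_d^{*}$. Passing to a subsequence, $\mathbf{u}_n\rightharpoonup\mathbf{u}$ weakly in $\mathbb{H}_d$ and strongly in $L^2(\Omega)^d$, and $J'(\mathbf{u})=0$ in the limit.

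The main obstacle is to rule out $\mathbf{u}=\mathbf{0}$. Suppose for contradiction that $\mathbf{u}=\mathbf{0}$. Then $u_{i,n}\to 0$ in $L^2(\Omega)$, so $\|u_{i,n}\|_i^2=\int_\Omega|\nabla u_{i,n}|^2+o(1)$, and setting $k_n:=\sum_i\int_\Omega|\nabla u_{i,n}|^2$ we obtain $k_n\to k:=3\mathcal{A}>0$. Combining the constraint $\mathbf{u}_n\in\mathcal{M}$ with H\"older's inequality $\int_\Omega|u_{i,n}|^3|u_{j,n}|^3\leq|u_{i,n}|_6^3|u_{j,n}|_6^3$ and the degree--$6$ homogeneity of the polynomial $P$ introduced in \eqref{P-def} gives
\begin{equation*}
k_n+o(1)=\sum_{i,j=1}^d\int_\Omega\beta_{ij}|u_{i,n}|^3|u_{j,n}|^3\leq P\bigl(|u_{1,n}|_6,\dots,|u_{d,n}|_6\bigr)\leq P_{\max}\Bigl(\sum_{i=1}^d|u_{i,n}|_6^2\Bigr)^{3}.
\end{equation*}
The Sobolev inequality $\widetilde{S}|u|_6^2\leq|\nabla u|_2^2$ summed over $i$ yields $\sum_i|u_{i,n}|_6^2\leq\widetilde{S}^{-1}k_n$, whence
\begin{equation*}
k_n+o(1)\leq P_{\max}\widetilde{S}^{-3}k_n^3.
\end{equation*}
Passing to the limit and using $k>0$ we conclude $k\geq P_{\max}^{-1/2}\widetilde{S}^{3/2}=3\mathcal{B}$, i.e.\ $\mathcal{A}\geq\mathcal{B}$, contradicting Lemma~\ref{GSS energy estimate 1}. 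Hence $\mathbf{u}\neq\mathbf{0}$.

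Since $J'(\mathbf{u})=0$ and $\mathbf{u}\neq\mathbf{0}$, automatically $\mathbf{u}\in\mathcal{M}$, so $J(\mathbf{u})\geq\mathcal{A}$. For the reverse, by weak lower semicontinuity of each quadratic form $v\mapsto\|v\|_i^2$ (a consequence of $\lambda_i>-\lambda_1(\Omega)$) combined with the strong $L^2$ convergence,
\begin{equation*}
J(\mathbf{u})=\tfrac{1}{3}\sum_{i=1}^d\|u_i\|_i^2\leq\liminf_{n\to\infty}\tfrac{1}{3}\sum_{i=1}^d\|u_{i,n}\|_i^2=\mathcal{A}.
\end{equation*}
Therefore $J(\mathbf{u})=\mathcal{A}$ and $\mathcal{A}$ is attained on $\mathcal{M}$, completing the proof.
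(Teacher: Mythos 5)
Your proof is correct, but it departs from the paper's argument in two places, both worth noting. First, you produce the Palais--Smale sequence by applying Ekeland's variational principle on the Nehari manifold $\mathcal{M}$ (after checking it is a $C^1$ manifold bounded away from the origin), then eliminate the Lagrange multiplier by testing with $\mathbf{u}_n$; the paper instead invokes the mountain pass theorem on all of $\mathbb{H}_d$ and gets a free PS sequence directly. Both routes yield the same bounded PS sequence with $J'(\mathbf{u}_n)\to 0$ at level $\mathcal{A}$, so this is a matter of taste. Second, and more substantively, to rule out $\mathbf{u}=\mathbf{0}$ the paper re-scales the residual $(\sigma_{1,n},\dots,\sigma_{d,n})$ (which here equals $\mathbf{u}_n$) so that $(t_n\sigma_{1,n},\dots,t_n\sigma_{d,n})\in\mathcal{M}'$ with $t_n\to 1$, and then uses $\inf_{\mathcal{M}'}E=\mathcal{B}$ as a black box to force $\mathcal{A}\geq\mathcal{B}$. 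You instead extract $3\mathcal{A}=k\geq P_{\max}^{-1/2}\widetilde{S}^{3/2}=3\mathcal{B}$ directly from H\"older (which is where $\beta_{ij}\geq 0$ enters), the degree-$6$ homogeneity of $P$, and the sharp Sobolev inequality, never referring to the limit-system Nehari manifold $\mathcal{M}'$ at all. Since the paper has already computed $\mathcal{B}=\frac{1}{3}P_{\max}^{-1/2}\widetilde{S}^{3/2}$ in the proof of Lemma~\ref{GSS energy estimate 1}, your inequality chain is essentially an unwinding of the paper's projection argument; the payoff is a cleaner, more self-contained contradiction that avoids the fiddly normalization $t_n\to 1$. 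The final compactness step (weak lower semicontinuity of $\sum_i\|\cdot\|_i^2$ plus strong $L^2$ convergence) is standard and matches the paper in content even though it replaces the explicit Brezis--Lieb decomposition $J(\mathbf{u}_n)=J(\mathbf{u})+\frac13\sum_i\int|\nabla\sigma_{i,n}|^2+o(1)$.
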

\begin{proof}
It is easy to see that the functional $J$ has a mountain pass structure, by the mountain pass theorem (see \cite{william=1996}), there exists $\lbr{\mathbf{u}_n} \subset \mathbb{H}_d$ such that
		\begin{equation*}
				\lim\limits_{n\to \infty}J(\mathbf{u}_n) = \mathcal{A}, \quad \lim_{n \to \infty} J^\prime(\mathbf{u}_n)= 0,
			\end{equation*}
			where $\mathbf{u}_n= \sbr{u_{1,n},\cdots,u_{d,n}}$. By a standard argument it is easy to see that $\lbr{\mathbf{u}_n} $ is bounded in $\mathbb{H}_d$. Up to subsequence, we may assume that
			\begin{equation} \label{4.36}
				\begin{aligned}
					&u_{i,n} \rightharpoonup u_i   \ \text{  weakly in } \ H_0^1(\Omega),
					&u_{i,n}  \rightarrow u_i   \ \text{  strongly in } \ L^2(\Omega).
				\end{aligned}
			\end{equation}
	It is standard to show $J^\prime(\mathbf{u})=0$. Set $\sigma_{i,n}= u_{i,n}-u_i$,  by Lemma \ref{brezis-lieb lemma} we have
			\begin{equation}
				\int_{\Omega}|u_{i,n}|^6 =\int_{\Omega}|\sigma_{i,n}|^6  + \int_{\Omega}|u_i|^6+o(1),
			\end{equation}
		and for $ i \neq j $,
		\begin{equation*}
				\int_{\Omega} |u_{i,n}|^3|u_{j,n}|^3=	\int_{\Omega}|\sigma_{i,n}|^3|\sigma_{j,n}|^3 +\int_{\Omega} |u_{i}|^3|u_{j}|^3 +o(1).
			\end{equation*}
			We deduce from \eqref{4.36} that
	\begin{equation} \label{4.38}
		\int_{\Omega} |\nabla u_{i,n}|^2 =\int_{\Omega} |\nabla \sigma_{i,n}|^2 +\int_{\Omega} |\nabla u_{i}|^2  +o(1).
	\end{equation}
	 Note that $\mathbf{u}_n\in \mathcal{M}$ and $J^\prime(\mathbf{u}_n) \to 0$, by a direct calculation we have
	 \begin{equation} \label{4.40}
	 	\int_{\Omega} |\nabla \sigma_{i,n}|^2 = \sum_{j=1}^d \int_{\Omega} \beta_{ij}|\sigma_{i,n}|^3|\sigma_{j,n}|^3 +o(1),
	 \end{equation}
	and
	\begin{equation} \label{4.39}
	J(\mathbf{u}_n)=J(\mathbf{u})+\frac{1}{3}\sum_{i=1}^d \int_{\Omega} |\nabla \sigma_{i,n}|^2 +o(1),
	\end{equation}
	which implies that $ \int_{\Omega} |\nabla \sigma_{i,n}|^2 $ is uniformly bounded for every $i=1,2,...,d$ and $n\in \N$. Passing to subsequence, we may assume that
	\begin{equation}
	\lim_{n \to \infty} \int_{\Omega} |\nabla \sigma_{i,n}|^2  =b_i \geq 0,\quad 1 \leq i \leq d.
	\end{equation}
	Thus,
	\begin{equation} \label{4.41}
	0\leq J(\mathbf{u}) \leq J(\mathbf{u}) + \frac{1}{3}\sum_{i=1}^d b_i =\lim_{n \to \infty}	J(\mathbf{u}_n) =\mathcal{A}  .
	\end{equation}

			Next, we will show that $\mathbf{u}\neq \mathbf{0}$. By using a contradiction argument we assume that  all components of $\mathbf{u}$ are zero, i.e.,  $u_i\equiv 0$, $ i=1,2,...,d $.		By \eqref{4.41}, we see that $b_1+b_2+\cdots+b_d=3\mathcal{A}>0$. Then we may assume that $(\sigma_{1,n},\cdots,\sigma_{d,n}) \neq \mathbf{0}$ for $n$ large. Recall the definition of $\mathcal{M}^\prime$ in \eqref{M'} and by \eqref{4.40}, it is easy to check that there exist $t_n$ such that $\sbr{t_n\sigma_{1,n},\cdots,t_n\sigma_{d,n}} \in \mathcal{M}^\prime$ and $t_n \to 1$ as $n \to \infty$. Then by \eqref{4.41}, we have
			\begin{equation*}
				\mathcal{A} =\frac{1}{3} \sum_{i=1}^d b_i= \lim_{n \to \infty} E\sbr{\sigma_{1,n},\cdots,\sigma_{d,n}} = \lim_{n \to \infty} E\sbr{t_n\sigma_{1,n},\cdots,t_n\sigma_{d,n}} \geq \mathcal{B},
			\end{equation*}
		which is a  contradiction to  Lemma \ref{GSS energy estimate 1}. Therefore, we have $\mathbf{u}\neq \mathbf{0}$. Since $J^\prime (\mathbf{u})=0$, we have  $\mathbf{u} \in \mathcal{M}$. Combing this with \eqref{4.41} we have
		\begin{equation*}
			\mathcal{A}=\inf_{\mathbf{v} \in \mathcal{M}} J(\mathbf{v})\leq J(\mathbf{u}) \leq \mathcal{A},
		\end{equation*}
	that is $ J(\mathbf{u}) = \mathcal{A} $. This completes the proof.
\end{proof}

The following proposition shows that the minimizer of $\mathcal{A}$ is semi-trivial for the weakly cooperative case.
\begin{proposition}\label{nonexistence-2}
If $\beta_{ij}\equiv b$, for any $i\neq j$, and
\begin{equation*}
0<b < 2^{\frac{3-d}{2}} \sqrt{\max_{1\leq i \leq d}\lbr{\beta_{ii}}\min_{1\leq i \leq d}\lbr{\beta_{ii}}},
\end{equation*}
then the system \eqref{mainequ} has no nontrivial ground state solutions.
\end{proposition}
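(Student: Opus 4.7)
I plan to argue by contradiction, adapting the strategy of \cite{Correia/Oliveria/Tavares=JFA=2016} to the critical exponent $p=3$. Set $m^* := \min_{1\leq i\leq d} m_i$. For each $i_0\in\{1,\ldots,d\}$, the semi-trivial element $(0,\ldots,\omega_{i_0},\ldots,0)$ lies in $\mathcal{M}$ (because $\omega_{i_0}$ is on the single-equation Nehari manifold of \eqref{B-N}), so that $\mathcal{A}\leq m^*$. By Proposition \ref{ground state achieve}, $\mathcal{A}$ is attained by some $\bar{\mathbf{u}}\in\mathcal{M}$. The central claim I will establish is that every $\mathbf{u}\in \mathbb{H}_d$ whose $d$ components are all nontrivial satisfies the strict inequality
\[ \max_{s>0} J(s\mathbf{u}) > m^*. \]
Once this is in place, observing that $\bar{\mathbf{u}}\in\mathcal{M}$ implies $\max_{s>0}J(s\bar{\mathbf{u}}) = J(\bar{\mathbf{u}}) = \mathcal{A}$, if $\bar{\mathbf{u}}$ had all nontrivial components we would get $\mathcal{A} > m^* \geq \mathcal{A}$, a contradiction; hence $\bar{\mathbf{u}}$ must be semi-trivial. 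Applied to fully-nontrivial $\mathbf{u}\in\mathcal{N}\subset\mathcal{M}$, the same inequality yields $\mathcal{C} = \inf_{\mathcal{N}} J \geq m^* = \mathcal{A}$; the strict separation $\mathcal{A}<\mathcal{C}$ then follows because elements of $\mathcal{N}$ are, thanks to the Sobolev inequality combined with the Nehari identity $\|u_i\|_i^2 = \beta_{ii}|u_i|_6^6 + b\sum_{j\neq i}\int_\Omega |u_j|^3|u_i|^3$, uniformly bounded away from any semi-trivial configuration.

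The main task is thus to prove the strict inequality. Starting from the explicit Mountain-Pass formula
\[ \max_{s>0}J(s\mathbf{u}) = \frac{1}{3}\frac{\sbr{\sum_i \|u_i\|_i^2}^{3/2}}{\sbr{\sum_i \beta_{ii}|u_i|_6^6 + b\sum_{i\neq j}\int_\Omega |u_i|^3|u_j|^3}^{1/2}}, \]
I will apply the Br\'ezis--Nirenberg characterization of $m_i$ in the form $\beta_{ii}|u_i|_6^6 \leq \|u_i\|_i^6/(9 m_i^2)$, together with H\"older's inequality giving $\int_\Omega |u_i|^3|u_j|^3\leq \|u_i\|_i^3\|u_j\|_j^3/(9 m_i m_j\sqrt{\beta_{ii}\beta_{jj}})$. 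Writing $a_i := \|u_i\|_i^2$, $M := \max_i\beta_{ii}$, $m := \min_i\beta_{ii}$ and using $m_i\geq m^*$, $\beta_{ii}\geq m$, after cubing and clearing $9(m^*)^2$ the claim $\max_{s>0}J(s\mathbf{u}) > m^*$ reduces to the purely algebraic statement
\[ \sum_i a_i^3 + \frac{b}{m}\sum_{i\neq j}a_i^{3/2}a_j^{3/2} < \Bigl(\sum_i a_i\Bigr)^3 = \sum_i a_i^3 + 3\sum_{i\neq j}a_i^2 a_j + 6\sum_{i<j<k}a_ia_ja_k, \]
to be verified for every $(a_1,\ldots,a_d)\in (\R^+)^d$ with all $a_i>0$ under the hypothesis $b<2^{(3-d)/2}\sqrt{Mm}$.

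The hard part will be extracting the precise constant $2^{(3-d)/2}\sqrt{Mm}$. My approach is to apply Young's inequality $a_i^{3/2} a_j^{3/2} \leq \frac{1}{2}(a_i^2 a_j + a_i a_j^2)$ to obtain $\sum_{i\neq j}a_i^{3/2}a_j^{3/2} \leq \sum_{i\neq j}a_i^2 a_j$, then distribute this sum among the $d(d-1)$ cross products $a_i^2 a_j$ appearing with coefficient $3$ in the expansion of $(\sum a_i)^3$; the combinatorial factor $2^{(3-d)/2}$ should emerge from this distribution, while the mixed product $\sqrt{Mm}$ reflects the fact that the diagonal contribution $\sum \beta_{ii}|u_i|_6^6$ is effectively bounded through $M$ (via the Br\'ezis--Nirenberg inequality combined with $m_i\geq m^*$) whereas the coupling is bounded through $m$. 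Tracking the heterogeneity in $m_i$ requires keeping the weight factors $(m^*/m_i)^2 \leq 1$ inside the sums, which is the delicate point. Once this algebraic inequality is verified, the chain of implications detailed above closes the contradiction argument, establishing both $\mathcal{A}<\mathcal{C}$ and the semi-trivial nature of every minimizer of $\mathcal{A}$, and completing the proof of Proposition \ref{nonexistence-2}.
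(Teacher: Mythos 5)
Your overall strategy (attain $\mathcal{A}$, assume the minimizer is fully nontrivial, compare with a semi-trivial competitor to derive a contradiction, following Correia--Oliveira--Tavares) is the same high-level idea the paper uses, but the quantitative heart of your argument has a genuine gap and, as sketched, cannot produce the stated constant. After your reductions, you are left needing the \emph{universal} algebraic inequality $\sum_i a_i^3 + \tfrac{b}{m}\sum_{i\neq j}a_i^{3/2}a_j^{3/2} < \big(\sum_i a_i\big)^3$ to hold for every $(a_1,\dots,a_d)\in(\R^+)^d$. But after Young's inequality this is equivalent (and tight at $a_1=\cdots=a_d$) to $b<3m$, with no $d$-dependence whatsoever: the promised combinatorial factor $2^{(3-d)/2}$ does not emerge from distributing the cross terms, and more seriously $M=\max_i\beta_{ii}$ never enters your estimates at all --- your Br\'ezis--Nirenberg step $\beta_{ii}|u_i|_6^6\le a_i^3/(9m_i^2)$ eliminates $\beta_{ii}$ before $M$ can play any role. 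Thus in the regime $M>9m\,2^{d-3}$ the hypothesis $b<2^{(3-d)/2}\sqrt{Mm}$ permits values of $b$ exceeding $3m$, and your argument breaks down there.

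The paper's proof avoids this loss precisely by \emph{not} reducing to a universal inequality in abstract variables $a_i$. It compares $J(\mathbf{u})$ with $J(t_0u_1,0,\dots,0)$ for the \emph{specific} first component of the putative nontrivial minimizer (with $t_0$ chosen so this lies in $\mathcal{M}$), then uses the Nehari identities of $\mathbf{u}$ itself to rewrite both sides in terms of $A=\beta_{11}|u_1|_6^6$, $B=\sum_{i\ge2}\beta_{ii}|u_i|_6^6$, $D=\sum_{j\ge2}|u_1u_j|_3^3$, etc. The inequality $J(\mathbf{u})\le J(t_0u_1,0,\dots,0)$ then reduces to $AB^2+2A^2B\le b^3D^3$. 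Crucially, the bound $D^2\le 2^{d-2}|u_1|_6^6\sum_{j\ge2}|u_j|_6^6\le aAB$ with $a=\tfrac{2^{d-2}}{\beta_{11}}\max_{j\ge2}\tfrac{1}{\beta_{jj}}$ is where both the factor $2^{d-2}$ (a Cauchy--Schwarz/power-mean estimate, not Young) and the individual $\beta_{ii}$ appear. Combining with AM-GM ($A^2B+2AB^2\ge2\sqrt2\,A^{3/2}B^{3/2}$) gives $b\ge\sqrt2\,a^{-1/2}=2^{(3-d)/2}\sqrt{\beta_{11}\min_{j\ge2}\beta_{jj}}$, and symmetrizing over the retained index yields $b\ge2^{(3-d)/2}\sqrt{Mm}$, contradicting the hypothesis. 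To salvage your proposal you would need to abandon the comparison with $m^*$ and instead carry out this component-specific comparison, or otherwise find a way to keep the $\beta_{ii}$-information alive through the estimate --- the ``delicate point'' you flag is exactly where your route loses the theorem.
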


\begin{proof}	
Based on Proposition \ref{ground state achieve}, we know that $\mathcal{A}$ can be attained by a  $\mathbf{u}=\sbr{u_1,\cdots,u_d}\in \mathcal{M}$, and $\mathbf{u}=\sbr{u_1,\cdots,u_d}$ is a ground state solution of system \eqref{mainequ}. Assume now that the ground state solution $\mathbf{u}$ is nontrivial.
Notice that
		\begin{equation}
			(tu_1,0,0,\cdots,0) \in \mathcal{M}\  \Leftrightarrow \  t^4=\cfrac{\left\| u_1\right\|_1^2 }{\beta_{11}|u_1|_6^6}\ ,
		\end{equation}
	and $\mathbf{u}$ is a ground state solution, we have
\begin{equation}\label{4.22}
	\begin{aligned}
		&J(u_1,...,u_d) \leq J(tu_1,0,\cdots,0) \Leftrightarrow \frac{1}{3} \sbr{\sum_{i=1}^d\beta_{ii}|u_i|_6^6 +b\sum_{\substack{i,j=1 \\ j\neq i}}^d |u_iu_j|_3^3} \leq \frac{1}{3}\frac{\left\| u_1\right\|_1^3 }{\sbr{\beta_{11}|u_1|_6^6}^{\frac{1}{2}}} \\   & \Leftrightarrow  \beta_{11}|u_1|_6^6 \sbr{\sum_{i=1}^d\beta_{ii}|u_i|_6^6 +b\sum_{\substack{i,j=1 \\ j\neq i}}^d |u_iu_j|_3^3}^2 \leq \sbr{\beta_{11}|u_1|_6^6 +b\sum_{\substack{j=2}}^d |u_1u_j|_3^3}^3.
	\end{aligned}
\end{equation}
	In this proof, to simplify the notations,  we take
	\begin{equation}
		A=\beta_{11}|u_1|_6^6, \ B=\sum\limits_{i=2}^d\beta_{ii}|u_i|_6^6, \  C=\sum\limits_{\substack{i,j=1 \\ j\neq i}}^d |u_iu_j|_3^3,\  D=\sum\limits_{\substack{j=2}}^d |u_1u_j|_3^3, \ E= \sum\limits_{\substack{i,j=2 \\ j\neq i}}^d |u_iu_j|_3^3.
	\end{equation}
	 Notice $C=2D+E$, then \eqref{4.22} is equivalent to
	\begin{equation} \label{4.24}
		A(A+B+2bD+bE)^2 \leq (A+bD)^3.
	\end{equation}
	By a direct calculation,  \eqref{4.24} implies that
	\begin{equation}
		AB^2+2A^2B \leq b^3D^3.
	\end{equation}
	Define
	$$a=\frac{2^{d-2}}{\beta_{11}}\max_{2\leq i \leq d}\lbr{\frac{1}{\beta_{ii}}}.$$
	Notice that
	\begin{equation}
		\begin{aligned}
			D^2 &=\sbr{\sum\limits_{\substack{j=2}}^d |u_1u_j|_3^3}^2 \leq \sbr{|u_1|_6^3\sum\limits_{\substack{j=2}}^d |u_j|_6^3}^2\leq 2^{d-2}|u_1|_6^6 \sum_{j=2}^d|u_j|_6^6 \leq  aAB.
		\end{aligned}	
	\end{equation}
	Thus
	\begin{equation}
		2\sqrt{2}A^{\frac{3}{2}}B^{\frac{3}{2}} \leq A^2B+2AB^2 \leq b^3D^3 \leq b^3a^{\frac{3}{2}}A^{\frac{3}{2}}B^{\frac{3}{2}}.
	\end{equation}
	From this inequality, we obtain that
	\begin{equation}
		b\geq \sqrt{2}a^{-\frac{1}{2}}=2^{\frac{3-d}{2}}\beta_{11}^\frac{1}{2}\min_{2\leq i \leq d} \lbr{\beta_{ii}}^{\frac{1}{2}}.
	\end{equation}
	By interchanging the roles of $\beta_{11}$ and $\beta_{jj}$, $j\geq 2$, we get that for all $j \in \lbr{1,\cdots,d}$,
	\begin{equation}
			b\geq \sqrt{2}a^{-\frac{1}{2}}=2^{\frac{3-d}{2}}\beta_{jj}^\frac{1}{2}\min_{\substack{1\leq i \leq d \\ i \neq j}} \lbr{\beta_{ii}}^{\frac{1}{2}}.
	\end{equation}
	In particular, 	
	\begin{equation}
		b\geq \sqrt{2}a^{-\frac{1}{2}}=2^{\frac{3-d}{2}}\max_{1\leq i \leq d}\lbr{\beta_{jj}}^\frac{1}{2}\min_{1\leq i \leq d } \lbr{\beta_{ii}}^{\frac{1}{2}}.
	\end{equation}
	Thus, if \begin{equation}
		b < \sqrt{2}a^{-\frac{1}{2}}=2^{\frac{3-d}{2}}\max_{1\leq i \leq d}\lbr{\beta_{jj}}^\frac{1}{2}\min_{1\leq i \leq d } \lbr{\beta_{ii}}^{\frac{1}{2}}
	\end{equation}
holds, then  the system \eqref{mainequ} has no nontrivial ground state solution.
\end{proof}

\begin{proof}[\bf Conclusion of the proof of Theorem \ref{existence-4}]
By Proposition \ref{ground state achieve} and Proposition \ref{nonexistence-2}, we know that Theorem \ref{existence-4} is true.
\end{proof}

\section{Existence for the purely competitive case}\label{Sec 4}

In this section, we consider the purely competitive case $\beta_{ij}\leq0$ and present the proof of theorem \ref{existence-4-1}. Recall the definitions of $\mathcal{N}_I$, $J_I$ and $\mathcal{C}_I$ in section \ref{Sec 2}, where $I\subseteq \{1,2,\ldots, d\}$.
To prove theorem \ref{existence-4-1}, we need the following several fundamental lemmas.

\begin{lemma}\label{L6 norm -2}
	Given $I \subseteq \lbr{1,2,...,d}$.	Assume that $\beta_{ij}\leq 0$ for all $i\neq j$, then there exist $C_3>0$ such that for any $\mathbf{u}_I\in \mathcal{N}_I$ there holds
	\begin{equation*}
		\int_{\Omega} |u_i|^6 \geq C_3, \quad i\in I.
	\end{equation*}
\end{lemma}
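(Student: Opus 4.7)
The plan is to exploit the Nehari-type constraint defining $\mathcal{N}_I$ together with the sign assumption $\beta_{ij}\leq 0$ for $i\neq j$ and the Sobolev embedding encoded in the constant $S$ from \eqref{S}.

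Fix $\mathbf{u}_I \in \mathcal{N}_I$ and $i \in I$. By definition of $\mathcal{N}_I$ we have $u_i \not\equiv 0$ and
\begin{equation*}
\|u_i\|_i^2 = \beta_{ii}\int_\Omega |u_i|^6 + \sum_{\substack{j\in I\\ j\neq i}} \beta_{ij}\int_\Omega |u_i|^3|u_j|^3.
\end{equation*}
Since each $\beta_{ij}\leq 0$ for $j\neq i$, every term in the sum on the right is nonpositive, so the identity yields
\begin{equation*}
\|u_i\|_i^2 \leq \beta_{ii}\int_\Omega |u_i|^6.
\end{equation*}

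Next, I invoke the Sobolev-type inequality recorded in the Further Notations, namely $S|u|_6^2 \leq \|u\|_i^2$ for all $u\in H_0^1(\Omega)$. Applying this to $u_i$ and combining with the previous bound gives
\begin{equation*}
S\,|u_i|_6^2 \leq \|u_i\|_i^2 \leq \beta_{ii}\,|u_i|_6^6.
\end{equation*}
Since $u_i\not\equiv 0$, I may divide by $|u_i|_6^2 > 0$ to obtain $|u_i|_6^4 \geq S/\beta_{ii}$, and hence
\begin{equation*}
\int_\Omega |u_i|^6 = |u_i|_6^6 \geq \left(\frac{S}{\beta_{ii}}\right)^{3/2} \geq \left(\frac{S}{\max_{1\leq k\leq d}\beta_{kk}}\right)^{3/2} =: C_3 > 0.
\end{equation*}

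The constant $C_3$ depends only on $S$, $\Omega$, and the diagonal couplings $\beta_{kk}$, so it is uniform in $i\in I$ and in the choice of $\mathbf{u}_I\in \mathcal{N}_I$. There is really no obstacle here: the sign condition $\beta_{ij}\leq 0$ makes the cross terms work in our favor, reducing the Nehari identity to a single-equation Sobolev-type lower bound, which is exactly the elementary step used to bound Nehari levels away from zero in the Br\'ezis--Nirenberg framework. The only thing to be careful about is not needing any upper energy bound (contrast with Lemma \ref{L^6norm-estimate}), since the proof of the lower bound uses no information about $J_I(\mathbf{u}_I)$.
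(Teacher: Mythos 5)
Your proof is correct and is precisely the standard argument that the paper cites but omits (the authors just refer to Lemma 3.2 of Chen--Zou 2015): drop the nonpositive cross terms in the Nehari identity to get $\|u_i\|_i^2 \leq \beta_{ii}|u_i|_6^6$, combine with the Sobolev-type inequality $S|u_i|_6^2 \leq \|u_i\|_i^2$ from \eqref{S}, and divide by $|u_i|_6^2 > 0$. The explicit constant $C_3 = (S/\max_k \beta_{kk})^{3/2}$ is uniform in $i$, $I$, and $\mathbf{u}_I$, as required.
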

\begin{proof}
The proof is similar to that of \cite[Lemma 3.2]{Zou 2015}, so we omit it.
\end{proof}

\begin{lemma} \label{Free critical point}
	Given $I \subseteq \lbr{1,2,...,d}$,  $\mathcal{N}_I$ is a smooth manifold. Moreover, the constrained critical points of $J_I$ on $\mathcal{N}_I$ are free critical point of $J_I$.
\end{lemma}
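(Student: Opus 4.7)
The plan is to mimic the structure of Lemma \ref{achieve and crtical point}, using the map $\mathbf{u}\mapsto (G_i(\mathbf{u}))_{i\in I}$ with
$$G_i(\mathbf{u})=\Ni{u_i}-\sum_{j\in I}\int_\Omega \beta_{ij}|u_i|^3|u_j|^3,$$
and showing that its differential at any point of $\mathcal{N}_I$ is surjective onto $\R^{|I|}$. The key is that when we test $G_i'(\mathbf{u})$ against a vector of the form $\mathbf{v}=(-t_j u_j)_{j\in I}$, we obtain, exactly as in \eqref{2.6}, the linear system
$$(G_i'(\mathbf{u})\mathbf{v})_{i\in I}^T = A_I(\mathbf{u})\mathbf{t},$$
where $A_I(\mathbf{u})$ is the same matrix defined in \eqref{defi of A_q and B_q}. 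Thus everything reduces to checking that $A_I(\mathbf{u})$ is non-singular for $\mathbf{u}\in\mathcal{N}_I$ in the purely competitive regime.

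The main difference with Lemma \ref{achieve and crtical point} is how to establish the non-singularity of $A_I(\mathbf{u})$: there one needed the smallness assumption $\beta_{ij}<K_1$, whereas here the sign condition $\beta_{ij}\le 0$ plays the same role automatically. Concretely, using the Nehari identity $G_i(\mathbf{u})=0$, the diagonal entry rewrites as
$$a_{ii}(\mathbf{u})=3\beta_{ii}\int_\Omega|u_i|^6+\Ni{u_i}>0,$$
so it is strictly positive. For the off-diagonal entries, since $\beta_{ij}\le 0$ one has $|a_{ij}(\mathbf{u})|=3|\beta_{ij}|\int_\Omega|u_i|^3|u_j|^3=-3\beta_{ij}\int_\Omega|u_i|^3|u_j|^3$, and a direct computation (again via the Nehari identity) gives
$$a_{ii}(\mathbf{u})-\sum_{j\in I,\, j\neq i}|a_{ij}(\mathbf{u})|=4\beta_{ii}\int_\Omega|u_i|^6+4\sum_{j\in I,\,j\neq i}\beta_{ij}\int_\Omega|u_i|^3|u_j|^3=4\Ni{u_i}>0.$$
Hence $A_I(\mathbf{u})$ is strictly diagonally dominant with positive diagonal, so by the Gershgorin circle theorem it is positive definite, in particular non-singular. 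Surjectivity of the differential of $(G_i)_{i\in I}$ then follows by choosing $\mathbf{t}=A_I(\mathbf{u})^{-1}\mathbf{h}$ for any prescribed $\mathbf{h}\in\R^{|I|}$, so $\mathcal{N}_I$ is a smooth submanifold of codimension $|I|$.

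For the natural-constraint property, I would follow the final part of the proof of Lemma \ref{achieve and crtical point} verbatim: if $\mathbf{u}\in\mathcal{N}_I$ is a constrained critical point of $J_I$, the Lagrange multiplier rule yields $\mu_i\in\R$ with $J_I'(\mathbf{u})=\sum_{i\in I}\mu_i G_i'(\mathbf{u})$; testing against $(0,\ldots,u_i,\ldots,0)$ and using $G_i(\mathbf{u})=0$ converts this into the homogeneous linear system $A_I(\mathbf{u})\boldsymbol{\mu}=\mathbf{0}$, and the invertibility just established forces $\boldsymbol{\mu}=\mathbf{0}$, hence $J_I'(\mathbf{u})=0$.

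The only substantive obstacle compared with Lemma \ref{achieve and crtical point} is replacing the smallness assumption on $\beta_{ij}$ by the sign condition in the diagonal dominance estimate above; this is where Lemma \ref{L6 norm -2} could, in principle, come in if one needed a quantitative lower bound on $\Ni{u_i}$, but in fact the argument above only uses $\|u_i\|_i^2>0$ (which is automatic from $\lambda_i\in(-\lambda_1(\Omega),-\lambda^*(\Omega))$ and $u_i\not\equiv 0$), so the proof is actually cleaner than in the cooperative case.
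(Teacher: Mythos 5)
Your proof is correct and follows essentially the same route as the paper: reduce to non-singularity of $A_I(\mathbf{u})$, and use the Nehari identity plus the sign condition $\beta_{ij}\le 0$ to get the strict diagonal dominance $a_{ii}(\mathbf{u})-\sum_{j\ne i}|a_{ij}(\mathbf{u})|=4\|u_i\|_i^2>0$, which is exactly the paper's computation. Your side remark that Lemma~\ref{L6 norm -2} is not actually needed here (only $\|u_i\|_i^2>0$) is accurate and a small clarification over the paper's terse presentation.
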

\begin{proof}
	The proof of this lemma can be completed by the method analogous to that used in Lemma \ref{achieve and crtical point}. Recall the proof of Lemma \ref{achieve and crtical point}, the key point  is to show that $A_I(\mathbf{u})$ is strictly diagonally dominant. However, in the purely competitive case, the result is straightforward. Since $\beta_{ij}<0$ and $u_i\not \equiv 0$, we have
	\begin{equation}
		\begin{aligned}
				a_{ii}(\mathbf{u})-\sum_{\substack{j \in I , j\neq i}}|a_{ij}(\mathbf{u})|
				&=4\beta_{ii} |u_i|_6^6 +\sum_{\substack{j\in I , j\neq i}}  \beta_{ij}|u_iu_j|_3^3 -3\sum_{\substack{j\in I , j\neq i}}\left|\beta_{ij}\right||u_iu_j|_3^3 \\
				&=4\beta_{ii} |u_i|_6^6 +4\sum_{\substack{j\in I , j\neq i}}  \beta_{ij}|u_iu_j|_3^3=4 \Ni{u_i}>0 , \ i \in I,
		\end{aligned}
	\end{equation}
which implies that $A_I(\mathbf{u})$ is strictly diagonally dominant in the purely competitive case. Then using the same arguments as in the proof of Lemma \ref{achieve and crtical point}, we can easily carry out the proof of this lemma.
\end{proof}
\begin{lemma} \label{PS sequence}
	Given $I\subseteq \lbr{1,\cdots,d}$,  there exists a sequence $\lbr{\mathbf{u}_n}\subset \mathcal{N}_I$ satisfying $$\lim_{n \to \infty}J_I(\mathbf{u}_n)=\mathcal{C}_I ,\quad \lim_{n \to \infty}J_I^{\prime}(\mathbf{u}_n)=0 \text{ in } H^{-1}(\Omega).$$
\end{lemma}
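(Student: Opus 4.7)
The plan is to mimic the proof of Lemma \ref{exist of ps sequence}, replacing the weak cooperation argument by the uniform lower bound on $\|u_i\|_i^2$ coming from Lemma \ref{L6 norm -2}. By the definition of $\mathcal{C}_I$ together with Ekeland's variational principle applied on the smooth constraint $\mathcal{N}_I$ (which is indeed a smooth manifold of codimension $|I|$ by Lemma \ref{Free critical point}), there exist $\mathbf{u}_n = (u_{i,n})_{i\in I} \in \mathcal{N}_I$ and Lagrange multipliers $\mu_{i,n} \in \R$ such that
\begin{equation}
J_I(\mathbf{u}_n) \to \mathcal{C}_I, \qquad J_I'(\mathbf{u}_n) - \sum_{i\in I}\mu_{i,n} G_i'(\mathbf{u}_n) = o(1) \ \text{ in } H^{-1}(\Omega),
\end{equation}
where $G_i$ is defined as in \eqref{defi of G_i}. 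Since on $\mathcal{N}_I$ one has $J_I(\mathbf{u}_n)=\frac{1}{3}\sum_{i\in I}\|u_{i,n}\|_i^2$, the sequence $\{\mathbf{u}_n\}$ is bounded in $\mathbb{H}_{|I|}$.

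Next I would show that $\mu_{i,n}\to 0$ for every $i\in I$. Testing the second relation with $(0,\ldots,u_{i,n},\ldots,0)$ and using $G_i(\mathbf{u}_n)=0$, the same computation as in \eqref{2.6} yields
\begin{equation}
A_I(\mathbf{u}_n)\,\boldsymbol{\mu}_n = \mathbf{o}(1), \qquad \boldsymbol{\mu}_n = (\mu_{i,n})_{i\in I}^T.
\end{equation}
By Lemma \ref{Free critical point} the matrix $A_I(\mathbf{u}_n)$ is strictly diagonally dominant, with
\begin{equation}
a_{ii}(\mathbf{u}_n) - \sum_{\substack{j\in I\\ j\neq i}} |a_{ij}(\mathbf{u}_n)| = 4\,\|u_{i,n}\|_i^2 \geq 4\, S\, C_3^{1/3},
\end{equation}
where the final estimate uses \eqref{S} and Lemma \ref{L6 norm -2}. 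By the Gershgorin circle theorem, the smallest eigenvalue $\nu_n$ of the symmetric matrix $A_I(\mathbf{u}_n)$ satisfies $\nu_n \geq 4SC_3^{1/3}$, a bound which is independent of $n$. Pairing with $\boldsymbol{\mu}_n$ therefore gives
\begin{equation}
o(1)|\boldsymbol{\mu}_n| \geq \boldsymbol{\mu}_n^T A_I(\mathbf{u}_n)\boldsymbol{\mu}_n \geq 4SC_3^{1/3}|\boldsymbol{\mu}_n|^2,
\end{equation}
so $\mu_{i,n}\to 0$ for every $i\in I$.

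Finally, since $\{\mathbf{u}_n\}$ is bounded in $\mathbb{H}_{|I|}$, $G_i'(\mathbf{u}_n)$ is uniformly bounded in $H^{-1}(\Omega)^{|I|}$ (the cubic couplings are locally Lipschitz on bounded sets by Sobolev embedding), and the conclusion $J_I'(\mathbf{u}_n)\to 0$ in $H^{-1}(\Omega)$ follows at once from $\mu_{i,n}\to 0$. The main (only) obstacle is verifying the uniform non-degeneracy of $A_I(\mathbf{u}_n)$; in the weakly cooperative case this required the smallness of the couplings, whereas here it comes for free from the sign condition $\beta_{ij}\leq 0$ together with Lemma \ref{L6 norm -2}.
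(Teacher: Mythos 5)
Your proposal is correct and follows essentially the same route as the paper. The paper simply states that the argument of Lemma \ref{exist of ps sequence} carries over and records the key inequality (4.1), namely that on $\mathcal{N}_I$ with $\beta_{ij}\le 0$ the diagonal dominance of $A_I(\mathbf{u}_n)$ reduces to $4\|u_{i,n}\|_i^2\ge 4SC_3^{1/3}$ via Lemma \ref{L6 norm -2}; you spell out the same computation and the same Gershgorin/eigenvalue lower bound, so your filled-in details are exactly what the paper leaves implicit.
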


\begin{proof}[\bf Proof]
The proof is similar to that of Lemma \ref{exist of ps sequence}, so we only sketch it.	
Recall the proof of Lemma \ref{exist of ps sequence}, the crucial step is to show  that the inequality \eqref{2.2} holds. Since $\beta_{ij}\leq0$ and $\mathbf{u}_n \in \mathcal{N}_I$, by Lemma \ref{L6 norm -2} we have
	\begin{equation} \label{4.1}
		\begin{aligned}
				&4\beta_{ii} |u_{i,n}|_6^6 +\sum_{\substack{j\in I, j\neq i}} \int_{\Omega} \beta_{ij}|u_{i,n}|^3|u_{j,n}|^3 -3\sum_{\substack{j\in I , j\neq i}}\left| \int_{\Omega} \beta_{ij}|u_{i,n}|^3|u_{j,n}|^3\right|\\ & = 4\Ni{u_{i,n}}\geq  4S|u_{i,n}|_6^2\geq 4SC_3^\frac{1}{3}>0  \ \text{ for every } i\in I.
		\end{aligned}
	\end{equation}
The remainder of the argument is analogous to that in Lemma \ref{exist of ps sequence}. 
\end{proof}

\begin{lemma} \label{achieve2}
	Given $I\subseteq \lbr{1,\cdots,d}$, if
$$
\mathcal{C}_I < \min \lbr{\mathcal{C}_\Gamma+\frac{1}{3}\sum_{i\in I \setminus \Gamma }\beta_{ii}^{-\frac{1}{2}}\widetilde{S}^{\frac{3}{2}}: \text{ for any } \Gamma \subsetneq I},
$$
then $\mathcal{C}_I$ is attained by $J_I$ on $\mathcal{N}_I$.
\end{lemma}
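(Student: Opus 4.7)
The plan is a standard concentration--compactness splitting argument: produce a Palais--Smale sequence, pass to a weak limit $\mathbf{u}$, use Br\'{e}zis--Lieb to separate it from a ``residual'' sequence, and rule out the loss of components by means of the assumed strict energy gap.

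First, invoke Lemma \ref{PS sequence} to obtain $\{\mathbf{u}_n\}\subset \mathcal{N}_I$ with $J_I(\mathbf{u}_n)\to \mathcal{C}_I$ and $J_I'(\mathbf{u}_n)\to 0$ in $H^{-1}$. Boundedness in $\mathbb{H}_{|I|}$ follows from $J_I(\mathbf{u}_n)=\tfrac{1}{3}\sum_{i\in I}\|u_{i,n}\|_i^2$. Extract a subsequence with $u_{i,n}\rightharpoonup u_i$ in $H_0^1$ and $u_{i,n}\to u_i$ in $L^2$; standard arguments give $J_I'(\mathbf{u})=0$. Define $\sigma_{i,n}=u_{i,n}-u_i$, so $\sigma_{i,n}\rightharpoonup 0$ in $H_0^1$, $\sigma_{i,n}\to 0$ in $L^2$, and Lemma \ref{brezis-lieb lemma} together with $J_I'(\mathbf{u})=0$ yields the decomposition
\begin{equation}
J_I(\mathbf{u}_n)=J_I(\mathbf{u})+\frac{1}{3}\sum_{i\in I}\int_{\Omega}|\nabla\sigma_{i,n}|^2+o(1),
\end{equation}
together with
\begin{equation}
\int_{\Omega}|\nabla \sigma_{i,n}|^2=\beta_{ii}\int_{\Omega}|\sigma_{i,n}|^6+\sum_{j\in I,\,j\ne i}\beta_{ij}\int_{\Omega}|\sigma_{i,n}|^3|\sigma_{j,n}|^3+o(1),\quad i\in I.
\end{equation}
Passing to a subsequence, set $k_i=\lim_{n\to\infty}\int_\Omega|\nabla\sigma_{i,n}|^2\ge 0$, so $\mathcal{C}_I=J_I(\mathbf{u})+\tfrac{1}{3}\sum_{i\in I}k_i$.

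Let $\Gamma=\{i\in I:u_i\not\equiv 0\}$. If $\Gamma=I$, then $\mathbf{u}\in\mathcal{N}_I$, hence $J_I(\mathbf{u})\ge\mathcal{C}_I$, which forces $k_i=0$ for every $i$ and therefore $J_I(\mathbf{u})=\mathcal{C}_I$; this is the desired conclusion. Suppose instead $\Gamma\subsetneq I$. For $i\in\Gamma$, testing $J_I'(\mathbf{u})=0$ against $(0,\dots,u_i,\dots,0)$ and using $u_j=0$ for $j\notin\Gamma$ shows that $(u_i)_{i\in\Gamma}$ satisfies subsystem \eqref{subsystem1} on $\Gamma$, lies in $\mathcal{N}_\Gamma$, and thus $J_I(\mathbf{u})=J_\Gamma((u_i)_{i\in\Gamma})\ge \mathcal{C}_\Gamma$ (we use the convention $\mathcal{C}_\emptyset=0$ if $\Gamma=\emptyset$).

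The key step is a lower bound on $k_i$ for $i\in I\setminus\Gamma$. For such $i$, $\sigma_{i,n}=u_{i,n}$, so using $\beta_{ij}\le 0$ for $j\ne i$ in the relation above gives
\begin{equation}
\int_{\Omega}|\nabla\sigma_{i,n}|^2\le \beta_{ii}\int_{\Omega}|\sigma_{i,n}|^6+o(1).
\end{equation}
Combining with the Sobolev inequality $\widetilde{S}\bigl(\int_\Omega|\sigma_{i,n}|^6\bigr)^{1/3}\le \int_\Omega|\nabla\sigma_{i,n}|^2$ and with Lemma \ref{L6 norm -2} (which forces $\lim_n\int_\Omega|\sigma_{i,n}|^6=\lim_n\int_\Omega|u_{i,n}|^6\ge C_3>0$ via Br\'ezis--Lieb), one passes to the limit to obtain $\lim_n\int_\Omega|\sigma_{i,n}|^6\ge \beta_{ii}^{-3/2}\widetilde{S}^{3/2}$, and hence
\begin{equation}
k_i\ \ge\ \beta_{ii}^{-\frac{1}{2}}\widetilde{S}^{\frac{3}{2}}\qquad\text{for every }i\in I\setminus\Gamma.
\end{equation}
Summing the energy identities gives
\begin{equation}
\mathcal{C}_I\ =\ J_I(\mathbf{u})+\frac{1}{3}\sum_{i\in I}k_i\ \ge\ \mathcal{C}_\Gamma+\frac{1}{3}\sum_{i\in I\setminus\Gamma}\beta_{ii}^{-\frac{1}{2}}\widetilde{S}^{\frac{3}{2}},
\end{equation}
contradicting the hypothesis. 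Thus $\Gamma=I$ and the proof is complete.

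The main obstacle will be the lower bound $k_i\ge \beta_{ii}^{-1/2}\widetilde{S}^{3/2}$: it requires both the non-vanishing of $\int|\sigma_{i,n}|^6$ (which is where Lemma \ref{L6 norm -2} is used in an essential way) and the sign hypothesis $\beta_{ij}\le 0$ to discard the coupling terms. The remaining steps are routine Br\'ezis--Lieb bookkeeping.
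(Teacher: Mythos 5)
Your proposal is correct and follows essentially the same route as the paper's proof: a Palais--Smale sequence from Lemma \ref{PS sequence}, weak limit $\mathbf{u}$ solving the subsystem, Br\'ezis--Lieb splitting, the lower bound $k_i\ge\beta_{ii}^{-1/2}\widetilde{S}^{3/2}$ for the indices where $u_i$ vanishes (using Lemma \ref{L6 norm -2} and $\beta_{ij}\le 0$), and the strict energy gap to derive a contradiction. The only cosmetic difference is that you take $\Gamma$ to be the set of \emph{non}-vanishing indices (matching the lemma's notation) while the paper's proof calls the set of vanishing indices $\Gamma$; the argument is identical.
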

\begin{proof}
		Based on Lemma \ref{PS sequence}, there exists a sequence $\lbr{\mathbf{u}_n}\subset \mathcal{N}_I$ satisfying $$\lim_{n \to \infty}J_I(\mathbf{u}_n)=\mathcal{C}_I ,\quad \lim_{n \to \infty}J_I^{\prime}(\mathbf{u}_n)=0 \text{ in } H^{-1}(\Omega).$$
	Thus, $\lbr{\mathbf{u}_n}$ is bounded in $\sbr{H_0^1(\Omega)}^{|I|}$. So, after passing to subsequence, we may assume
	\begin{equation}
		\begin{aligned}
			&u_{i,n} \rightharpoonup u_i   \ \text{  weakly in } \ H_0^1(\Omega),
			&u_{i,n}  \rightarrow u_i   \ \text{  strongly in } \ L^2(\Omega).
		\end{aligned}
	\end{equation}
By a standard argument, $\mathbf{u}=\sbr{u_i}_{i\in I}$ is a solution to the subsystem \eqref{subsystem1}. We assert that $\mathbf{u}$ is nontrivial.

If the assertion is false, then we may assume that some components of $\mathbf{u}$ are trivial. Let $\Gamma:=\lbr{i\in I: u_i \equiv 0}$. Then, for each $i\in \Gamma$, we have $u_{i,n}\to 0$ strongly in $L^2(\Omega)$. By Lemma \ref{L6 norm -2} and Sobolev inequality we see that
\begin{equation}\label{Lowerbounded}
\int_\Omega |\nabla u_{i,n}|^2\geq C,
\end{equation}
where $C$ is independent on $n$.
 As $\mathbf{u}_n\in \mathcal{N}_I$ and $\beta_{ij}\leq0$, we get that
\begin{equation*}
	 |\nabla u_{i,n}|_2^2+o(1)=\Ni{u_{i,n}}\leq\beta_{ii}|u_{i,n}|_6^6 \leq \beta_{ii}\widetilde{S}^{-3}|\nabla u_{i,n}|_2^6+o(1).
\end{equation*}
 Combining this with \eqref{Lowerbounded} we know that $\beta_{ii}^{-\frac{1}{2}}\widetilde{S}^{\frac{3}{2}}\leq |\nabla u_{i,n}|_2^2+o(1)$ for every $i\in \Gamma$. Since $\mathbf{u}$ solves \eqref{subsystem1},  we obtain
\begin{equation}
	\begin{aligned}
		\mathcal{C}_I &=\lim_{n \to \infty} J_I(\mathbf{u}_n) =\lim_{n \to \infty} \frac{1}{3}\left(\sum_{i\notin\Gamma}  \Ni{u_{i,n}}+ \sum_{i\in \Gamma}\Ni{u_{i,n}} \right) \\
		& \geq \liminf_{n\to \infty} \frac{1}{3}\sum_{i\notin\Gamma}  \Ni{u_{i,n}} + \frac{1}{3}\sum_{i\in \Gamma}\beta_{ii}^{-\frac{1}{2}}\widetilde{S}^{\frac{3}{2}}\\
		&\geq \frac{1}{3}\sum_{i\notin\Gamma}  \Ni{u_{i}}+\frac{1}{3}\sum_{i\in \Gamma}\beta_{ii}^{-\frac{1}{2}}\widetilde{S}^{\frac{3}{2}}
=J_{I\setminus\Gamma}(\mathbf{u})+\frac{1}{3}\sum_{i\in \Gamma}\beta_{ii}^{-\frac{1}{2}}\widetilde{S}^{\frac{3}{2}}\\
&\geq 	\mathcal{C}_{I\setminus\Gamma} +\frac{1}{3}\sum_{i\in \Gamma}\beta_{ii}^{-\frac{1}{2}}\widetilde{S}^{\frac{3}{2}},
	\end{aligned}
\end{equation}
which leads to a contradiction. Therefore, $\mathbf{u}$ is nontrivial. This implies that $\mathbf{u}\in \mathcal{N}_I$, and
\begin{equation*}
	\mathcal{C}_I \leq J(\mathbf{u}) \leq \liminf_{n\to \infty} J(\mathbf{u}_n) =	\mathcal{C}_I.
\end{equation*}
Hence, $J(\mathbf{u})=	\mathcal{C}_I$.  This completes the proof.
\end{proof}

The following proposition will play an important role in proving that $\mathcal{C}$ is achieved by a solution with $d$ nontrivial components. Our approach is inspired by \cite{ClappSzulkin}.

\begin{proposition} \label{Key estimate}
	Suppose that $\beta_{ij}\leq 0$ for all $i \neq j$, then
	\begin{equation} \label{es-4}
		\mathcal{C}< \min \lbr{\mathcal{C}_I+\frac{1}{3}\sum_{i\notin I}\beta_{ii}^{-\frac{1}{2}}\widetilde{S}^{\frac{3}{2}}:I \subsetneq \lbr{1,\cdots,d}}.
	\end{equation}
\end{proposition}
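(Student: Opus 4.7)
The plan is to prove the inequality by an explicit test-function construction, in the spirit of \cite{ClappSzulkin}, adapted to the dimension-three Br\'ezis--Nirenberg setting. Fix a proper subset $I\subsetneq\{1,\ldots,d\}$ and, without loss of generality, take $I=\{1,\ldots,q\}$ with $q<d$. Since Theorem \ref{existence-4-1} will itself be established by induction on $|I|$ through the interplay of the present proposition and Lemma \ref{achieve2}, I may assume as an inductive hypothesis that $\mathcal{C}_I$ is attained by a positive solution $\mathbf{u}_I=(u_1,\ldots,u_q)\in\mathcal{N}_I$ of the subsystem \eqref{subsystem1} (in particular, the $u_i$ lie in $L^\infty(\Omega)$ by elliptic regularity).

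For the test function, I would use the Br\'ezis--Nirenberg cutoff $\varphi(x)=\cos(\pi|x-x_0|/(2R_0))$ on an inradius ball $B_{R_0}(x_0)\subset\Omega$ and set $w_\varepsilon(x)=\varphi(x)U_\varepsilon(x-x_0)$ as in Lemma \ref{GSS energy estimate 1}. The direct higher-dimensional strategy of placing the $d-q$ extra bubbles in disjoint balls inside $\Omega$ is not available, because the threshold $\lambda^*(\Omega)=\pi^2/(4R_0^2)$ is controlled by the single inradius ball and no collection of smaller disjoint balls need respect the Br\'ezis--Nirenberg condition for each individual $\lambda_i$. Instead I would concentrate all the extra bubbles at the \emph{same} point $x_0$ but at well-separated scales: pick $0<\alpha_{q+1}<\alpha_{q+2}<\cdots<\alpha_d$, set $\varepsilon_i=\varepsilon^{\alpha_i}$, and consider
\begin{equation*}
\mathbf{v}_\varepsilon=\bigl(t_1u_1,\ldots,t_qu_q,\ t_{q+1}w_{\varepsilon_{q+1}},\ldots,t_dw_{\varepsilon_d}\bigr),
\end{equation*}
with $t_i=t_i(\varepsilon)>0$ to be chosen so that $\mathbf{v}_\varepsilon\in\mathcal{N}$. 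The scale separation together with direct three-dimensional bubble computations yields the crucial decoupling estimates
\begin{equation*}
\int_\Omega w_{\varepsilon_i}^3 w_{\varepsilon_j}^3\,dx=O\bigl(\varepsilon^{3(\alpha_j-\alpha_i)/2}|\log\varepsilon|\bigr)\ (i<j),\qquad \int_\Omega u_k^3 w_{\varepsilon_i}^3\,dx=O\bigl(\varepsilon_i^{3/2}|\log\varepsilon_i|\bigr),
\end{equation*}
both of which are $o(1)$ as $\varepsilon\to 0$; the logarithmic factor is exactly the dim-three signature emphasised in the introduction.

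Thanks to these estimates, the Nehari system for $\mathbf{v}_\varepsilon$ splits asymptotically into the Nehari system of $\mathbf{u}_I$ on $\mathcal{N}_I$ (whose unique positive solution is $t_i=1$ for $i\in I$) and $d-q$ uncoupled scalar Br\'ezis--Nirenberg relations $t_i^4\beta_{ii}|w_{\varepsilon_i}|_6^6=\|w_{\varepsilon_i}\|_i^2$, giving $t_i\to\beta_{ii}^{-1/4}$ for $i\notin I$. Existence of $t(\varepsilon)$ for small $\varepsilon$ follows from the implicit function theorem applied at the limiting $t^*$, using the diagonal-dominance/positive-definiteness of the Jacobian established for the purely competitive case in Lemma \ref{Free critical point}. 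Since $\mathbf{v}_\varepsilon\in\mathcal{N}$, one has $J(\mathbf{v}_\varepsilon)=\tfrac{1}{3}\sum_i t_i^2\|v_i\|_i^2$; the first $q$ terms tend to $3\mathcal{C}_I$, while for $i\notin I$ the Nehari relation gives $t_i^2\|w_{\varepsilon_i}\|_i^2=\|w_{\varepsilon_i}\|_i^3/(\sqrt{\beta_{ii}}|w_{\varepsilon_i}|_6^3)+o(1)$. The dim-three expansions \eqref{estimate1}--\eqref{estimate3} together with $\lambda_i<-\lambda^*(\Omega)$ then give $\|w_{\varepsilon_i}\|_i^2=\widetilde{S}^{3/2}+2\sqrt{3}\pi R_0\bigl(\pi^2/(4R_0^2)+\lambda_i\bigr)\varepsilon_i+O(\varepsilon_i^2)$ with strictly negative leading correction, whence $\|w_{\varepsilon_i}\|_i^3/|w_{\varepsilon_i}|_6^3<\widetilde{S}^{3/2}$ strictly. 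Summing delivers
\begin{equation*}
\mathcal{C}\leq J(\mathbf{v}_\varepsilon)<\mathcal{C}_I+\tfrac{1}{3}\sum_{i\notin I}\beta_{ii}^{-1/2}\widetilde{S}^{3/2}
\end{equation*}
for $\varepsilon$ sufficiently small, as required.

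The main obstacle will be the bookkeeping of the multiscale construction: one must simultaneously control cross terms that carry logarithmic factors, solve the full Nehari system uniformly in $\varepsilon$ via the implicit function theorem, and ensure that the Br\'ezis--Nirenberg gain $2\sqrt{3}\pi R_0(\pi^2/(4R_0^2)+\lambda_i)\varepsilon_i$ survives against all error terms. It is precisely this multiscale device that reduces the genuinely coupled three-dimensional problem to a sum of independent single-equation Br\'ezis--Nirenberg estimates, and is the novel ingredient required in dim three.
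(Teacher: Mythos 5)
Your strategy departs from the paper's in a substantive way, and as written it leaves a quantitative gap that the paper's argument is designed to avoid.

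The paper also inducts on the number of equations, but it uses the induction hypothesis in a way you did not exploit: since the statement is assumed for all subsystems of size $\leq d-1$, one has for any $I'\subsetneq I$ with $|I|=d-1$ that $\mathcal{C}_I<\mathcal{C}_{I'}+\tfrac13\sum_{i\in I\setminus I'}\beta_{ii}^{-1/2}\widetilde{S}^{3/2}$, and hence the minimum in \eqref{es-4} is already achieved over subsets of size exactly $d-1$. This reduction means only a \emph{single} new bubble $w_\varepsilon$ is ever needed per induction step. The paper then bounds $J(\mathbf{u}_\varepsilon)\leq\Psi(t_1,\ldots,t_{d-1})+\Phi(t_d)$, where the cross terms $|\beta_{id}||u_iw_\varepsilon|_3^3$ are absorbed into the coefficient of $t_d^2$ in $\Phi$ using $t_{\varepsilon,i}\leq R$, and concludes $\mathcal{C}\leq\max\Psi+\max\Phi<\mathcal{C}_I+\tfrac13\beta_{dd}^{-1/2}\widetilde{S}^{3/2}$. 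In this formulation one never needs to solve the Nehari system to precision $o(\varepsilon)$: any $\mathbf{t}\in[r,R]^d$ putting the test function on $\mathcal{N}$ suffices. Your observation that placing the $d-q$ extra bubbles in smaller disjoint balls fails in three dimensions (because $\lambda^*(\Omega)$ scales with the inradius of the ball, not the domain) is correct and genuinely points at the heart of the three-dimensional difficulty; but the conclusion you drew from it — that a multiscale family of coincident bubbles is necessary — is not, because the reduction to $|I|=d-1$ removes the need for more than one bubble.

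Beyond this structural difference, there is a concrete gap in your estimate for $i\notin I$. You write $t_i^2\|w_{\varepsilon_i}\|_i^2=\|w_{\varepsilon_i}\|_i^3/(\sqrt{\beta_{ii}}|w_{\varepsilon_i}|_6^3)+o(1)$, but the Br\'ezis--Nirenberg gain you are relying on is $\|w_{\varepsilon_i}\|_i^3/(\sqrt{\beta_{ii}}|w_{\varepsilon_i}|_6^3)-\beta_{ii}^{-1/2}\widetilde{S}^{3/2}=c_i\varepsilon_i(1+o(1))$ with $c_i<0$, so an error that is merely $o(1)$ can swamp it. You must show the error is $o(\varepsilon_i)$, and your own cross-term estimates make this delicate: the bubble-bubble interaction $|w_{\varepsilon_i}w_{\varepsilon_j}|_3^3=O((\varepsilon_j/\varepsilon_i)^{3/2}|\log(\varepsilon_j/\varepsilon_i)|)$ is $o(\varepsilon_i)$ only under a geometric separation such as $\alpha_j>\tfrac53\alpha_i$ for $j>i$, and the implicit-function-theorem control of $t_i-t_i^*$ must be carried out to the same scale-dependent precision — a point you flag but do not settle. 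The cleanest repair for your multiscale construction is to abandon the precise Nehari solution and instead mimic the paper's decomposition: bound $J(\mathbf{v}_\varepsilon)\leq\Psi(t_1,\ldots,t_q)+\sum_{i\notin I}\Phi_i(t_i)$, placing each cross term involving $w_{\varepsilon_i}$ and either $u_k$ or a faster bubble $w_{\varepsilon_j}$ ($j>i$) into the quadratic coefficient of $\Phi_i$ via the uniform bound $t\leq R$, and then maximize each piece separately. With the separation $\alpha_j>\tfrac53\alpha_i$ this yields $\max\Phi_i<\tfrac13\beta_{ii}^{-1/2}\widetilde{S}^{3/2}$ for small $\varepsilon$ and closes the argument; but at that point the multiscale construction is strictly more work than the paper's one-bubble-per-induction-step route and buys nothing extra.
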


\begin{proof}
	We proceed to  prove this statement by induction on the number of equations .
	
	For the case $d=1$, this statement was proved by Br\'ezis and Nirenberg in \cite{Brezis-Nirenberg1983}.

Assume that the statement is true for every subsystem with $|I|$$  \leq d-1 $. Then the statement \eqref{es-4} reduces to

\begin{equation} \label{es-5}
	\mathcal{C}< \min \lbr{\mathcal{C}_I+\frac{1}{3}\sum_{i\notin I}\beta_{ii}^{-\frac{1}{2}}\widetilde{S}^{\frac{3}{2}}:|I|=d-1}.
\end{equation}
Without loss of generality, we may assume that $I=\lbr{1,\cdots,d-1}$. By Lemma \ref{achieve2} and our induction hypothesis, there exists a least energy positive solution $\sbr{u_1,\cdots,u_{d-1}}$  to the corresponding subsystem with $I=\lbr{1,\cdots,d-1}$ and $J(u_1,\cdots,u_{d-1})=\mathcal{C}_I$.

For simplicity, we may assume $0\in \Omega$ and $B_{R_0}(0) $ is the largest ball contained in $\Omega$, then we take
\begin{equation}
	\varphi(x)=\begin{cases}
		\cos\sbr{\frac{\pi |x|}{2R_0}},  &x \in B_{R_0}(0),\\
		0 ,& x \in \Omega \setminus B_{R_0}(0),
	\end{cases}
\end{equation}
and  set
\begin{equation*}
	w_{\varepsilon}(x):=U_{\varepsilon}(x)\varphi(x).
\end{equation*}
where $U_{\varepsilon}$ is defined in \eqref{defi of U{varepsilon}}. Similarly to Lemma \ref{GSS energy estimate 1} we get
\begin{equation} \label{3.17}
	\int_{\Omega} |\nabla w_{\varepsilon}|^2 = \widetilde{S}^{\frac{3}{2}}+\frac{\sqrt{3}}{2R_0}\pi^3\varepsilon + O(\varepsilon^2), \quad \int_{\Omega} |w_{\varepsilon}|^6  = \widetilde{S}^{\frac{3}{2}}+ O(\varepsilon^2),
\end{equation}
\begin{equation}\label{3.19}
	\int_{\Omega} |w_{\varepsilon}|^2  = 2\sqrt{3}\pi\varepsilon R_0+ O(\varepsilon^2).
\end{equation}
Moreover,
\begin{equation}\label{ES3-3}
	\int_{\Omega}|w_{\varepsilon}|^3\leq \int_{B_{R_0}(0)}U_\varepsilon^3=C\int_{B_{R_0}(0)}\left(\frac{\varepsilon}{\varepsilon^2+|x|^2}\right)^{\frac{3}{2}}dx
	=C\varepsilon^{\frac{3}{2}}|ln\varepsilon|+O(\varepsilon^2).
\end{equation}
 Note that by the standard regularity theory we have $u_i \in C^{0}(\bar{\Omega})$. Therefore,
\begin{equation} \label{ES3-4}
	\int_{\Omega} |u_i|^3|w_{\varepsilon}|^3   \leq \sbr{\max_{x\in \bar{\Omega}}|u_i(x)|^3}\int_{\Omega} |w_{\varepsilon}|^3 =C\varepsilon^{\frac{3}{2}}|ln\varepsilon|+O(\varepsilon^2).
\end{equation}
To show that  \eqref{es-5} holds, we need the following claim.

{\bf Claim:} There exist $r,R >0 $  independent of $\varepsilon$ and $t_{\varepsilon,1},\cdots, t_{\varepsilon,d}$ $ \in \mbr{r,R}$ such that	$$u_{\varepsilon}=(t_{\varepsilon,1}u_1,\cdots,t_{\varepsilon,d-1}u_{d-1}, t_{\varepsilon,d}w_{\varepsilon}) \in \mathcal{N}.$$

Assume now that this claim is true (we will prove this later). Then
\begin{equation}
	\begin{aligned}
	\mathcal{C}\leq J(u_{\varepsilon})&\leq\frac{1}{2}\sum_{i=1}^{d-1}t_{\varepsilon,i}^2\Ni{u_i}-\frac{1}{6}\sum_{i=1}^{d-1}t_{\varepsilon,i}^6\beta_{ii}|u_i|_6^6-\frac{1}{6}\sum_{\substack{i,j=1,j\neq i}}^{d-1}t_{\varepsilon,i}^3t_{\varepsilon,j}^3\beta_{ij}|u_iu_j|_3^3\\
		&+\frac{1}{2}t_{\varepsilon,d}^2\left\| w_{\varepsilon}\right\|_d^2  -\frac{1}{6}t_{\varepsilon,d}^6\beta_{dd}|w_{\varepsilon}|_6^6+\frac{1}{3}\sum_{i=1}^{d-1}R^4t_{\varepsilon,d}^2|\beta_{id}||u_iw_{\varepsilon}|_3^3\\
		&=: \Psi(t_{\varepsilon,1},\cdots,t_{\varepsilon,d-1})+ \Phi(t_{\varepsilon,d}).
	\end{aligned}
\end{equation}
As $\sbr{u_1,\cdots,u_{d-1}}$ is a least energy positive solution to the corresponding subsystem, then $(1,\cdots,1)$ is a critical point of $\Psi$. By \cite[Lemma 2.2]{ClappSzulkin} we get that the critical point is unique and
\begin{equation*}
 \max_{t_1,...,t_{d-1}>0}\Psi(t_1,...,t_{d-1})=\Psi(1,\cdots,1)=J_I\sbr{u_1,\cdots,u_{d-1}}=\mathcal{C}_I.
\end{equation*}
By \eqref{3.17}, \eqref{3.19}, \eqref{ES3-4} and $R$ is independent of $\varepsilon$, we know that
\begin{equation*}
	\frac{1}{3}\sum_{i=1}^{d-1}R^4t^2|\beta_{id}||u_iw_{\varepsilon}|_3^3= o(\varepsilon)t^2 \quad \text{ for } \varepsilon \text{ small enough },
\end{equation*}
and so
\begin{equation*}
	\Phi(t)=\frac{1}{2}\left(\widetilde{S}^{\frac{3}{2}}+2\sqrt{3}\pi R_{0} (\lambda_d+\frac{\pi^2}{4R_0^2})\varepsilon+o(\varepsilon)+O(\varepsilon^2) \right) t^2-\frac{1}{6}\left(\beta_{dd}\widetilde{S}^{\frac{3}{2}}+O(\varepsilon^2)\right) t^6.
\end{equation*}
Since   $\lambda_d \in (-\lambda_1(\Omega), -\lambda^*(\Omega))$, where $\lambda^*(\Omega)=\frac{\pi^2}{4R_0^2} $, it is standard to see that
\begin{equation*}
	\max_{t>0} \Phi(t) < \frac{1}{3}\beta_{dd}^{-\frac{1}{2}}\widetilde{S}^{\frac{3}{2}} \quad \text{for $\varepsilon$ small enough}.
\end{equation*}
It follows that
\begin{equation}
	\begin{aligned}
		\mathcal{C} \leq \max_{t_1,...,t_{d-1}>0}\Psi(t_1,...,t_{d-1})+ \max_{t>0} \Phi(t)< \mathcal{C}_I+\frac{1}{3}\beta_{dd}^{-\frac{1}{2}}\widetilde{S}^{\frac{3}{2}} \quad \text{for $\varepsilon$ small enough}.
	\end{aligned}
\end{equation}
Therefore, if we assume here that  the claim is true, the proof is completed. It remains to prove this claim.

Consider the polynomial function $\mathcal{J}:(0,+\infty)^d \to \R$
	\begin{equation}
		\begin{aligned}
		\mathcal{J}(\mathbf{t}):= J(t_1u_1,\cdots,t_{d-1}u_{d-1},t_dw_{\varepsilon})=\sum_{i=1}^d a_it_i^2 - \sum_{i=1}^d b_it_i^6 +\sum_{\substack{i,j=1,j\neq i}}^d c_{ij}t_i^3t_j^3,
		\end{aligned}		
	\end{equation}
where $\mathbf{t}=(t_1,\cdots,t_d)$ and
\begin{equation*}
	\begin{aligned}
		&a_i=\frac{1}{2} \Ni{u_i}, \ b_i=\frac{1}{6}\beta_{ii}|u_i|_6^6,\ c_{ij}=-\frac{1}{6}\beta_{ij}|u_iu_j|_3^3,\quad  i,j=1,\ldots,d-1, \\
		& a_d=\frac{1}{2} \|w_{\varepsilon}\|_d^2 , \ b_d=\frac{1}{6}\beta_{dd}|w_{\varepsilon}|_6^6,\ c_{id}=c_{di}=-\frac{1}{6}\beta_{id}|u_iw_{\varepsilon}|_3^3, \quad i=1,\ldots,d-1.
	\end{aligned}
\end{equation*}
Then, for any $i=1,\cdots,d$,
	\begin{equation}
		\partial_i \mathcal{J}(\mathbf{t} )= 2a_it_i-6b_it_i^5+6\sum_{\substack{j=1 ,j\neq i}}^d c_{ij}t_i^2t_i^3.
	\end{equation}
 For  $\varepsilon$ small enough, by  \eqref{ES3-4}  we have
\begin{equation} \label{4.20}
	\begin{aligned}
		6b_i -\sum_{\substack{j=1 ,j\neq i}}^d 6c_{ij}&=\beta_{ii}|u_i|_6^6+\sum_{\substack{j=1 ,j\neq i}}^{d-1 }\beta_{ij}\int_{\Omega} |u_i|^3|u_j|^3+ \beta_{id} \int_{\Omega} |u_i|^3|w_{\varepsilon}|^3\\ &=\Ni{u_i}+ \beta_{id} \int_{\Omega} |u_i|^3|w_{\varepsilon}|^3\geq \frac{1}{2}\Ni{u_i}>0  \quad \text{ for every }1\leq  i\leq d-1,\\
		6b_i -\sum_{\substack{j=1 ,j\neq i}}^d 6c_{ij}&=\beta_{dd}|w_{\varepsilon}|_6^6+\sum_{\substack{j=1}}^{d-1 }\beta_{dj}\int_{\Omega} |w_{\varepsilon}|^3|u_j|^3 > \frac{1}{2}\beta_{dd} \widetilde{S}^{\frac{3}{2}}>0,
	\end{aligned}
\end{equation}
this implies that $$6b_i -\sum_{\substack{j=1 ,j\neq i}}^d 6c_{ij}>0 \text{ for every } 1\leq i \leq d.$$
	For $\varepsilon$ small enough, by \eqref{3.17}-\eqref{ES3-4} we have for $1\leq i \leq d-1$
	\begin{equation*}
		\begin{aligned}
			\cfrac{\left\| w_{\varepsilon}\right\|_d^2 }{\beta_{dd}|w_{\varepsilon}|_6^6}>  \frac{1}{2\beta_{dd}}, \quad \cfrac{\left\| w_{\varepsilon}\right\|_d^2 }{\beta_{dd}|w_{\varepsilon}|_6^6+\sum_{\substack{j=1}}^{d-1 }\beta_{dj} |w_{\varepsilon}u_j|_3^3}<\frac{1}{\beta_{dd}},\quad \cfrac{\Ni{u_i}}{\Ni{u_i}+\beta_{id}|u_iw_{\varepsilon}|_3^3} < 2.
		\end{aligned}
	\end{equation*}
Take
\begin{equation}
	r^4=\min\limits_{1\leq i \leq d-1}\lbr{\cfrac{\Ni{u_i}}{\beta_{ii}|u_i|_6^6},\  \frac{1}{2\beta_{dd}}},\ \ 	R^4=\max\lbr{2,\frac{1}{\beta_{dd}}}.
\end{equation}
 Then we have
\begin{equation}
	2a_it-\sbr{6b_i-6\sum_{\substack{j=1 ,j\neq i}}^d c_{ij}}t^5<0 \quad \text{ if } t\in \sbr{R,\infty},
\end{equation}
and
\begin{equation}
		2a_it-6b_it^5>0\quad \text{ if } t\in \sbr{0,r}.
\end{equation}
Take $\mathbf{s}=(s_1,\cdots,s_d)\in \sbr{0,\infty}^d$, we assume that $s_i=\max\lbr{s_1,\cdots,s_d}$. Then if $s_i>R$, then
\begin{equation}
	\partial_i \mathcal{J}(\mathbf{s} ) \leq  2a_is_i-\sbr{6b_i-6\sum_{\substack{j=1 ,j\neq i}}^d c_{ij}}s_i^5<0.
\end{equation}
On the other hand, if  $s_i<r$, then

\begin{equation}
	\partial_i \mathcal{J}(\mathbf{s} ) \geq 2a_is_i-6b_is_i^5>0.
\end{equation}
This  fact implies that
\begin{equation}
\max\limits_{\mathbf{t}\in \sbr{0,\infty}^d} \mathcal{J}(\mathbf{t} )=\max\limits_{\mathbf{t}\in \mbr{r,R}^d} \mathcal{J}(\mathbf{t} ).
	\end{equation}
In particular, $ \mathcal{J}$ attains its maximum  on $\sbr{0,\infty}^d$, and this maximum point $\sbr{t_{\varepsilon,1},\cdots, t_{\varepsilon,d}} $ must be a critical point. Then it is easy to check	$(t_{\varepsilon,1}u_1,\cdots,t_{\varepsilon,d-1}u_{d-1}, t_{\varepsilon,d}w_{\varepsilon}) \in \mathcal{N}$ and the claim is true. This completes the proof.
\end{proof}

\begin{proof}[\bf Conclusion of the proof of theorem \ref{existence-4-1}]
	Following directly from Lemma \ref{Free critical point}, Lemma \ref{achieve2} and Proposition \ref{Key estimate},  we get that $\mathbf{u}=\sbr{u_1,\cdots,u_d}$ is a nontrivial solution of system \eqref{mainequ} and $J(\mathbf{u})=\mathcal{C}$. Set $\mathbf{\widehat{u}}=(|u_1|,\cdots,|u_d|)$, then $\mathbf{\widehat{u}}$ is  a nonnegative solution of system \eqref{mainequ} and $J(\mathbf{\widehat{u}})=\mathcal{C}$.
By the maximum principle, we see that $\mathbf{\widehat{u}}$ is a least energy positive solution of system \eqref{mainequ}. The proof is completed.
\end{proof}


\end{document}